\title{Restricted volumes on K\"ahler manifolds}
\author[T.C. Collins]{Tristan C. Collins}
\address{Department of Mathematics, Massachusetts Institute of Technology, 77 Massachusetts Avenue, Cambridge, MA 02139}
\email{tristanc@mit.edu}
\author[V. Tosatti]{Valentino Tosatti}
\address{Department of Mathematics and Statistics, McGill University, Montr\'eal, Qu\'ebec H3A 0B9, Canada}
\address{Department of Mathematics, Northwestern University, 2033 Sheridan Road, Evanston, IL 60208}
\email{valentino.tosatti@mcgill.ca}
\thanks{The first-named author is supported in part by NSF grant DMS-1506652, the second-named author by a Sloan Research Fellowship and by NSF grants DMS-1308988 and DMS-1903147.}
\dedicatory{Dedicated to Professor Ahmed Zeriahi on the occasion of his retirement.}
\theoremstyle{plain}
\newtheorem{thm}{Theorem}[section]
\newtheorem{prop}[thm]{Proposition}
\newtheorem{lem}[thm]{Lemma}
\newtheorem{cor}[thm]{Corollary}
\newtheorem{conj}[thm]{Conjecture}
\theoremstyle{definition}
\newtheorem{rem}[thm]{Remark}
\theoremstyle{definition}
\newtheorem{ex}[thm]{Example}
\numberwithin{equation}{section}
\newcommand{\ddbar}{\sqrt{-1} \partial \overline{\partial}}
\newcommand{\Null}{\textnormal{Null}}
\newcommand{\ve}{\varepsilon}
\newcommand{\vp}{\varphi}
\newcommand{\ti}[1]{\tilde{#1}}
\renewcommand{\leq}{\leqslant}
\renewcommand{\geq}{\geqslant}
\newcommand{\vol}{\mathrm{Vol}}
\newcommand{\ov}[1]{\overline{#1}}
\begin{document}
\begin{abstract}
We study numerical restricted volumes of $(1,1)$ classes on compact K\"ahler manifolds, as introduced by Boucksom. Inspired by work of Ein-Lazarsfeld-Musta\c{t}\u{a}-Nakamaye-Popa on restricted volumes of line bundles on projective manifolds, we pose a natural conjecture to the effect that irreducible components of the non-K\"ahler locus of a big class should have vanishing numerical restricted volume. We prove this conjecture when the class has a Zariski decomposition, and give several applications.
\end{abstract}
\maketitle
\section{Introduction}
Let $X$ be a smooth projective variety over $\mathbb{C}$ and $L\to X$ a holomorphic line bundle. Given $V\subset X$ an irreducible positive-dimensional analytic subvariety, we define (following \cite{ELMNP}, and originating in \cite{HM,Ta}) the restricted volume of $L$ along $V$ to be
$$\vol_{X|V}(L)=\limsup_{m\to\infty}\frac{\dim\mathrm{Im}(H^0(X,mL)\to H^0(V,mL|_V))}{m^{\dim V}/(\dim V)!}.$$
In particular, $\vol_{X|V}(L)>0$ implies that $L|_V$ is big, but not conversely. Geometrically, $\vol_{X|V}(L)>0$ means that for all $m$ sufficiently large and divisible, the rational map
$\Phi_m:X\dashrightarrow Y_m\subset\mathbb{CP}^{N_m}$ given by sections of $mL$ restricts to a map $\Phi_m|_V:V\dashrightarrow W_m\subset\mathbb{CP}^{N_m}$ which is birational with its image $W_m$ (see \cite[Corollary 2.5]{BCL}).

In the case when $V=X$ we obtain the familiar volume of $L$, denoted by $\vol_{X}(L)$ or just $\vol(L)$. It is easy to extend the definition of restricted volumes to $\mathbb{Q}$-divisors by homogeneity. Restricted volumes have proved to be an extremely useful tool in algebraic geometry, see e.g. \cite{BCL, BFJ, CN,CHPW,Den, DPa,LM,Lo,Le,Nak2,PT} and references therein.

Our main interest is in developing the theory of restricted volumes in the transcendental setting when $X$ is a compact K\"ahler manifold and $L$ is replaced by a $(1,1)$ cohomology class $[\alpha]$. In the case when $V\not\subset E_{\rm nK}(\alpha)$, the non-K\"ahler locus of $[\alpha]$ as defined in \cite{Bo2}, Hisamoto \cite{Hi} and Matsumura \cite{Ma} independently found an analytic formula for the restricted volume, generalizing Boucksom's analytic formula for the volume of a line bundle \cite{Bo}, which however is poorly behaved when $V$ is contained in $E_{\rm nK}(\alpha)$.

On the other hand, in the absolute case when $V=X$, a general theory of moving intersection products was developed by Boucksom and his collaborators \cite{BoT, BDPP, BEGZ,BFJ}. Our goal is to extend these ideas to the relative case, and to develop a parallel theory to the algebraic case \cite{ELMNP}. In the case of nef classes, this was achieved in our earlier work \cite{CT} (see also \cite{CT2,CT3}).

Let now $X$ be a compact K\"ahler manifold, let $\alpha$ a closed real $(1,1)$ form such that its cohomology class $[\alpha]$ is pseudoeffective, and let $V\subset X$ an irreducible positive-dimensional analytic subvariety. We wish to define the numerical restricted volume $\langle \alpha^{\dim V}\rangle_{X|V}$. If $V\subset E_{\rm nn}(\alpha)$ (the non-nef locus of $[\alpha]$ as defined in \cite{Bo2}) we define $\langle \alpha^{\dim V}\rangle_{X|V}=0$, while if $V\not\subset E_{\rm nn}(\alpha)$ we define
$$\langle \alpha^{\dim V}\rangle_{X|V}=\lim_{\ve\downarrow 0}\sup_{T}\int_{V_{\rm reg}} ((T+\ve\omega)|_{V_{\rm reg}})_{\rm ac}^{\dim V},$$
where $(\cdot)_{\rm ac}$ denoted the absolutely continuous part in the Lebesgue decomposition (as in \cite{Bo}), and the supremum is over all closed real $(1,1)$ currents $T$ in the class $[\alpha]$ which satisfy $T\geq -\ve\omega$ on $X$, such that $T$ has analytic singularities which do not contain $V$. The fact that such currents $T$ exist follows from the assumption that $V\not\subset E_{\rm nn}(\alpha)$, as we will see. Also, if we choose $V=X$ then we have
$$\langle\alpha^n\rangle_{X|X}=\vol(\alpha),$$
the volume of $[\alpha]$, as defined by Boucksom \cite{Bo}.

This analytic definition coincides with the ``mobile intersection number''
$$(\alpha^{\dim V}\cdot [V])_{\geq 0},$$
defined in Boucksom's thesis \cite[Definition 3.2.1]{BoT}, where he proves some of its basic properties. For example, \cite[Proposition 3.2.2]{BoT} shows that when the class $[\alpha]$ is nef then we simply have that
$$\langle \alpha^{\dim V}\rangle_{X|V}=\int_V\alpha^{\dim V}.$$

For a general pseudoeffective class $[\alpha]$ we define its null locus to be
$$\Null(\alpha)=\bigcup_{\langle \alpha^{\dim V}\rangle_{X|V}=0} V,$$
where the union is over all irreducible analytic subvarieties of $X$ with vanishing numerical restricted volume. When $[\alpha]$ is also nef this coincides with the usual null locus (cf. \cite{CT}).

\begin{conj}\label{conjec}
Let $X$ be a compact K\"ahler manifold and $[\alpha]$ a pseudoeffective $(1,1)$ class. Then we have
\begin{equation}\label{enk}
E_{\rm nK}(\alpha)=\Null(\alpha).
\end{equation}
Equivalently, if $V$ is an irreducible component of $E_{\rm nK}(\alpha)$ then we have
\begin{equation}\label{vol0}
\langle \alpha^{\dim V}\rangle_{X|V}=0.
\end{equation}
\end{conj}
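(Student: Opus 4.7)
The plan is to reduce Conjecture~\ref{conjec} to the nef case proved in \cite{CT}, by exploiting the assumed Zariski decomposition $[\alpha]=[P]+[N]$, with $[P]$ nef and $N=\sum_i\nu_i D_i$ an effective $\mathbb{R}$-divisor.

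I would first verify the identity $E_{nK}(\alpha)=E_{nK}(P)\cup\mathrm{Supp}(N)$, using standard properties of the non-K\"ahler locus under Boucksom's divisorial Zariski decomposition \cite{Bo2}. Combined with $E_{nK}(P)=\Null(P)$ from the nef case \cite{CT}, a maximality argument shows that every irreducible component $V$ of $E_{nK}(\alpha)$ falls into one of two cases: either (A) $V=D_i$ for some $i$, or (B) $V$ is an irreducible component of $\Null(P)$ not contained in $\mathrm{Supp}(N)$.

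Case (A) is immediate: the generic minimal multiplicity of $[\alpha]$ along $V=D_i$ equals $\nu_i>0$, so $V\subset E_{nn}(\alpha)$, and the restricted volume vanishes by definition. The bulk of the proof is case (B), where I would establish the equality
\[
\langle\alpha^{\dim V}\rangle_{X|V}=\langle P^{\dim V}\rangle_{X|V}.
\]
This suffices: since $P$ is nef, \cite[Proposition 3.2.2]{BoT} gives $\langle P^{\dim V}\rangle_{X|V}=\int_V P^{\dim V}$, which vanishes by \cite{CT} because $V\subset\Null(P)$. The easy inequality $\langle P^{\dim V}\rangle_{X|V}\le\langle\alpha^{\dim V}\rangle_{X|V}$ is obtained by lifting any admissible $S\in[P]$ (with $S\ge-\ve\omega$ and analytic singularities avoiding $V$) to $T=S+[N]\in[\alpha]$: the divisorial contribution $[N]|_{V_{reg}}$ is supported on the proper analytic subset $V\cap\mathrm{Supp}(N)$ and hence contributes nothing to the absolutely continuous restriction to $V_{reg}$.

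The reverse inequality is the principal obstacle. Given an admissible $T\in[\alpha]$ with $T\ge-\ve\omega$ and analytic singularities avoiding $V$, I would need to construct an admissible $S\in[P]$ whose absolutely continuous restriction to $V_{reg}$ essentially matches that of $T$ up to an error vanishing as $\ve\downarrow 0$. The plan is to regularize $T$ via Demailly's approximation theorem so that the divisorial part of its Siu decomposition concentrates, up to arbitrarily small residue, on $\mathrm{Supp}(N)$, and then to subtract the current of integration along $N$. This subtraction is sensible because $V$ meets $\mathrm{Supp}(N)$ only along the proper analytic subset $V\cap\mathrm{Supp}(N)$; the delicate point is to preserve the lower bound $S\ge-\ve'\omega$ and keep the analytic singularities of $S$ away from $V$, uniformly in $\ve$. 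The expected tools are Demailly regularization, the orthogonality inherent in the Zariski decomposition in Boucksom's sense, and a careful local analysis near $V\cap\mathrm{Supp}(N)$.
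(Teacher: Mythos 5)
Your proposal does not prove the statement in question. Conjecture~\ref{conjec} is unconditional, but your argument assumes from the outset that $[\alpha]$ admits a Zariski decomposition $[\alpha]=[P]+[N]$ with $[P]$ nef. That hypothesis is a genuine restriction, not a harmless normalization: as recalled in the paper, in dimension $\geq 3$ there exist big classes with no Zariski decomposition, not even on any bimeromorphic model (\cite{Bo2,Na}), and no reduction to the nef-positive-part situation is available in general. So what your plan can deliver is exactly the special case stated as Theorem~\ref{l5} (and its variant Theorem~\ref{l7}), not the conjecture itself; indeed the paper leaves the general statement open and only proves it under this extra hypothesis (plus the projective divisorial case via \cite{ELMNP} and the divisor case via \cite{WN}). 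Within that special case your route is essentially the paper's: $E_{nK}(\alpha)=E_{nK}(P)$ (Lemma~\ref{l1}), $\langle\alpha^{\dim V}\rangle_{X|V}=\langle P^{\dim V}\rangle_{X|V}$ (Lemma~\ref{l3}), the nef identity $\langle P^{\dim V}\rangle_{X|V}=\int_V P^{\dim V}$ (Lemma~\ref{l4}), and then the main theorem of \cite{CT}. Note that no case split is needed: if $V\subset E_{nn}(\alpha)$ the restricted volume vanishes by definition, and otherwise the chain of lemmas applies directly (when $P$ is nef one has $E_{nn}(\alpha)=\mathrm{Supp}(N)$, so your case (A) is subsumed).

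Even granting the Zariski hypothesis, the step you flag as the principal obstacle is left genuinely open, and the specific fix you sketch would fail. Subtracting the current of integration $[N]$ from an admissible $T\in[\alpha]$ with $T\geq-\ve\omega$ does not preserve the lower bound: such a $T$ only satisfies $\nu(T,D)\geq\nu_\ve(\alpha,D)$, the generic minimal multiplicity of the class $[\alpha+\ve\omega]$, which can be strictly smaller than the coefficient $\nu(\alpha,D)$ of $D$ in $N$; hence $T-[N]$ may have negative divisorial mass and need not satisfy $T-[N]\geq-\ve'\omega$ for any small $\ve'$. No amount of Demailly regularization of $T$ forces its Siu divisorial part to dominate $N$ at the fixed level $\ve$. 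The paper's Lemma~\ref{l3} circumvents this by subtracting instead $\mathcal{N}_\ve=\sum_D\nu_\ve(\alpha,D)[D]$, which $T$ does dominate by its Siu decomposition, and then correcting the cohomology class by smooth representatives of $\left(\nu_\ve(\alpha,D_j)-\nu(\alpha,D_j)\right)[D_j]$; this produces a current in $P$ with analytic singularities not containing $V$, bounded below by $-(\ve+\psi(\ve))\omega$ with $\psi(\ve)\to 0$, and the resulting change in the restricted Monge--Amp\`ere mass is controlled uniformly in $T$ by Lemma~\ref{finite}. If you intend your write-up to stand, you must either reproduce this $\mathcal{N}_\ve$ device or supply an alternative argument for the inequality $\langle\alpha^{\dim V}\rangle_{X|V}\leq\langle P^{\dim V}\rangle_{X|V}$; and in any case the result obtained should be stated as the conditional Theorem~\ref{l5}, not as a proof of Conjecture~\ref{conjec}.
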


Clearly it is enough to show \eqref{enk} when $[\alpha]$ is big.
In the case when $[\alpha]$ is nef this is the main theorem of our previous work \cite{CT} (which was itself a transcendental extension of Nakamaye's Theorem \cite{Nak}), and so Conjecture \ref{conjec} is the natural extension of our earlier result to all pseudoeffective $(1,1)$ classes. As we will explain presently, it is also the natural transcendental extension of a later result of Namakaye \cite{Nak2} and Ein-Lazarsfeld-Musta\c{t}\u{a}-Nakamaye-Popa \cite{ELMNP}.

Recall that every pseudoeffective class $[\alpha]$ has a divisorial Zariski decomposition $[\alpha]=P+N$, introduced by Boucksom \cite{Bo2} (and Nakayama \cite{Na} in the algebraic case). The class $P$ in general is nef in codimension $1$, in the sense that each irreducible component of $E_{\rm nn}(P)$ has codimension at least $2$ in $X$. If $P$ is actually nef, then we say that $[\alpha]$ has a Zariski decomposition. Thanks to a classical result of Zariski \cite{Za}, every pseudoeffective $(1,1)$ class has a Zariski decomposition when $\dim X=2$, however there are examples in dimensions greater than $2$ of big classes which do not have a Zariski decomposition, not even on any birational model \cite{Bo2, Na}.

\begin{thm}\label{l5}
Conjecture \ref{conjec} holds if the class $[\alpha]$ has a Zariski decomposition. In particular, it always holds when $X$ is a complex surface.
\end{thm}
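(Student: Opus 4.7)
The plan is to prove the nontrivial inclusion $E_{nK}(\alpha) \subseteq \Null(\alpha)$ by exhibiting, for each $x \in E_{nK}(\alpha)$, an irreducible subvariety $W \ni x$ with $\langle \alpha^{\dim W}\rangle_{X|W} = 0$; the reverse inclusion follows by general considerations about the non-K\"ahler locus. The first step is the set-theoretic inclusion
$$E_{nK}(\alpha) \subseteq E_{nK}(P) \cup \mathrm{Supp}(N),$$
which holds because if $T_P \in [P]$ is a K\"ahler current with analytic singularities avoiding $x$, then $T_P + N$ is again a K\"ahler current, now in $[\alpha]$, whose analytic singular set is contained in $\mathrm{Sing}(T_P) \cup \mathrm{Supp}(N)$.

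Given $x \in E_{nK}(\alpha)$, I would split into two cases. If $x \in \mathrm{Supp}(N)$, I take $W$ to be a prime component of $\mathrm{Supp}(N)$ through $x$; since $P$ is nef the minimal multiplicity satisfies $\nu(\alpha,\cdot) = \nu(N,\cdot)$, so $W \subseteq E_{nn}(\alpha)$ and $\langle \alpha^{\dim W}\rangle_{X|W} = 0$ by definition. If instead $x \notin \mathrm{Supp}(N)$, the set-theoretic inclusion forces $x \in E_{nK}(P)$, and since $P$ is nef and big, the authors' earlier theorem \cite{CT} gives $E_{nK}(P) = \Null(P)$; thus $x$ lies in an irreducible component $W$ of $\Null(P)$, which satisfies $\int_W P^{\dim W} = 0$ and, because $x \in W$, also $W \not\subseteq \mathrm{Supp}(N)$.

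The core of the argument is to show $\langle \alpha^{\dim W}\rangle_{X|W} = 0$ in this second case. Set $k = \dim W$. For any test current $T \in [\alpha]$ with $T \geq -\ve\omega$ and analytic singularities not containing $W$, the Siu decomposition of $T+\ve\omega$ together with Boucksom's identification of the coefficients $\nu_i$ of $N$ as infimal generic Lelong numbers forces $\nu(T, E_i) \geq \nu_i$ for each prime divisor $E_i \subset \mathrm{Supp}(N)$, so the current
$$S_\ve := T + \ve\omega - N \geq \ve\omega$$
is a K\"ahler current in the K\"ahler class $[P] + \ve[\omega]$, whose analytic singularities still miss $W$. Since $W \cap \mathrm{Supp}(N)$ is a proper analytic subset of $W$ (hence of measure zero in $W_{reg}$) and $N$ vanishes on its complement, the absolutely continuous parts of $(T+\ve\omega)|_{W_{reg}}$ and $S_\ve|_{W_{reg}}$ agree almost everywhere, and the standard cohomological mass bound for positive currents representing a K\"ahler class gives
$$\int_{W_{reg}} ((T+\ve\omega)|_{W_{reg}})^k_{ac} = \int_{W_{reg}} (S_\ve|_{W_{reg}})^k_{ac} \leq \int_W (P + \ve\omega)^k.$$
Expanding the right-hand side and using nefness of $P$ together with $\int_W P^k = 0$ shows it is $O(\ve)$; taking $\sup_T$ and then $\ve \downarrow 0$ yields $\langle \alpha^k\rangle_{X|W} = 0$, as required.

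The main technical obstacle will be to make the Siu step $T + \ve\omega - N \geq \ve\omega$ and the restricted cohomological mass inequality rigorous; both rely on a careful interplay between Boucksom's description of the divisorial Zariski decomposition via minimal Lelong numbers and the behavior of non-pluripolar products of positive $(1,1)$ currents under restriction to a possibly singular analytic subvariety.
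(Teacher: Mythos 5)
Your overall route is the same as the paper's: reduce to the nef positive part $P$ of the divisorial Zariski decomposition and then quote the nef Nakamaye-type theorem of \cite{CT}. The genuine gap is in the key step of your second case, namely the claim that every test current $T\in[\alpha]$ with $T\geq-\ve\omega$ and analytic singularities not containing $W$ satisfies $\nu(T,E_i)\geq\nu_i=\nu(\alpha,E_i)$ along the components of $N$. Boucksom's coefficients $\nu_i$ are infima of generic Lelong numbers over \emph{positive} currents in $[\alpha]$; equivalently they are the increasing limits, as $\ve\downarrow 0$, of the corresponding infima $\nu_\ve(\alpha,E_i)$ taken over currents with $T\geq-\ve\omega$. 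A current that is only bounded below by $-\ve\omega$ therefore satisfies merely $\nu(T,E_i)\geq\nu_\ve(\alpha,E_i)$, and in general $\nu_\ve(\alpha,E_i)<\nu_i$ strictly: for instance, if $\alpha=[E]$ with $E$ an irreducible curve of negative self-intersection on a surface, then $\nu(\alpha,E)=1$ while positive currents in $[\alpha+\ve\omega]$ need only have Lelong number $1-\ve\,\omega\cdot E/(-E^2)$ along $E$. Consequently $S_\ve=T+\ve\omega-N$ need not be a positive current at all, and the mass bound against $\int_W(P+\ve\omega)^{k}$ cannot be applied. You also cannot sidestep this by restricting to genuinely positive test currents: since $W\subset E_{nK}(P)\subset E_{nK}(\alpha)$, Lemma \ref{volr} is unavailable, and the supremum in the definition of $\langle\alpha^k\rangle_{X|W}$ really runs over all $\ve$-positive currents. (A minor symptom of the same looseness: even granting your Lelong inequality, $T\geq-\ve\omega$ only gives $S_\ve\geq 0$, not $S_\ve\geq\ve\omega$.)

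This is exactly the difficulty that the paper's Lemma \ref{l3} is designed to handle: one subtracts the $\ve$-level negative part $\mathcal{N}_\ve=\sum_D\nu_\ve(\alpha,D)[D]$ rather than $N$ itself, and then corrects by smooth representatives of the classes $(\nu_\ve(\alpha,D_j)-\nu(\alpha,D_j))[D_j]$; this produces a current in the class $P$ with analytic singularities not containing $W$, bounded below by $-(\ve+\psi(\ve))\omega$ with $\psi(\ve)\to 0$, and the resulting error in the restricted Monge--Amp\`ere masses is controlled by Lemma \ref{finite}. Combined with Lemma \ref{l4} (which is the precise form of your ``standard cohomological mass bound'' and itself rests on the monotonicity theorem of \cite{BEGZ} after an embedded resolution of $W$) and with $E_{nK}(\alpha)=E_{nK}(P)$ from Lemma \ref{l1}, one gets $\langle\alpha^{k}\rangle_{X|W}=\langle P^{k}\rangle_{X|W}=\int_W P^{k}=0$. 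Your remaining steps---the inclusion $E_{nK}(\alpha)\subset E_{nK}(P)\cup\mathrm{Supp}(N)$, the case $x\in\mathrm{Supp}(N)$, the expansion $\int_W(P+\ve\omega)^k=O(\ve)$, and the surface case via the classical Zariski decomposition \cite{Za}---are fine, so the proof is repaired precisely by replacing your Siu-decomposition shortcut with the argument of Lemma \ref{l3}.
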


In fact, it is enough to assume that $[\alpha]$ admit a Zariski decomposition on some bimeromorphic model, see Theorem \ref{l7}.

The connection with the algebraic setting is given by the following:
\begin{thm}\label{regg}
If $X$ is a projective manifold, $L$ is a pseudoeffective line bundle and $V\subset X$ is a positive-dimensional irreducible analytic subvariety, then given any ample line bundle $H$ on $X$ we have
\begin{equation}\label{regular}
\langle c_1(L)^{\dim V}\rangle_{X|V}=\lim_{\ve\downarrow 0} \vol_{X|V}(L+\ve H),
\end{equation}
where we restrict ourselves to $\ve\in\mathbb{Q}_{>0}$. In particular, we always have
\begin{equation}\label{oneway}
\vol_{X|V}(L)\leq \langle c_1(L)^{\dim V}\rangle_{X|V}.
\end{equation}
\end{thm}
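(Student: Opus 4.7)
My plan is to first establish, as an intermediate step, the equality
\begin{equation*}
\vol_{X|V}(\beta) = \langle c_1(\beta)^{\dim V}\rangle_{X|V} \tag{$\star$}
\end{equation*}
for an arbitrary big rational class $\beta=c_1(L')$ with $L'$ a big line bundle on the projective manifold $X$, and then apply $(\star)$ with $\beta=L+\ve H$ and let $\ve\downarrow 0$. Note that for $\ve\in\mathbb{Q}_{>0}$ the class $L+\ve H$ is big, so both sides of $(\star)$ are defined.

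For $(\star)$ I would distinguish two cases according as $V\subset E_{nK}(L')$ or not, recalling that for a big class on a projective manifold $E_{nK}(L')$ coincides with the augmented base locus. If $V\not\subset E_{nK}(L')$, the analytic formula of Hisamoto \cite{Hi} and Matsumura \cite{Ma} expresses $\vol_{X|V}(L')$ as the integral over $V_{reg}$ of the top power of the absolutely continuous part of the restriction to $V$ of a current of minimal singularities in $[L']$ whose singularities do not contain $V$; combining this with Demailly's regularization by psh functions with analytic singularities identifies it with the supremum defining $\langle c_1(L')^{\dim V}\rangle_{X|V}$. If instead $V\subset E_{nK}(L')$, the theorem of Ein-Lazarsfeld-Musta\c{t}\u{a}-Nakamaye-Popa \cite{ELMNP} gives $\vol_{X|V}(L')=0$, and I would show the analytic quantity vanishes as well: either $V\subset E_{nn}(L')$, in which case $\langle c_1(L')^{\dim V}\rangle_{X|V}=0$ by definition, or $V$ lies in the divisorial part of $E_{nK}(L')$, which by Boucksom's analysis equals the support of the negative part of the divisorial Zariski decomposition of $[L']$ on a suitable bimeromorphic model; a direct estimate then shows that the Monge-Amp\`ere mass on $V_{reg}$ of any admissible current in the supremum is controlled by intersection numbers that vanish.

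Granted $(\star)$, the limit in \eqref{regular} follows by applying $(\star)$ to $\beta=L+\ve H$ and letting $\ve\downarrow 0$. Convergence of the right-hand side $\langle c_1(L+\ve H)^{\dim V}\rangle_{X|V}$ to $\langle c_1(L)^{\dim V}\rangle_{X|V}$ rests on the continuity of the numerical restricted volume (equivalently, of Boucksom's mobile intersection product) under the perturbation $L\mapsto L+\ve H$; the boundary case $V\subset E_{nn}(L)$, where the desired limit is zero by definition, is handled using Boucksom's identity $E_{nn}(L)=\bigcap_{\ve>0}E_{nK}(L+\ve H)$ with $[H]$ playing the role of the reference K\"ahler class. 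The inequality \eqref{oneway} then follows from \eqref{regular} together with the algebraic bound $\vol_{X|V}(L)\leq \vol_{X|V}(L+\ve H)$ valid for every $\ve>0$: for each sufficiently divisible $m$, choose a section $\sigma$ of $m\ve H$ not vanishing identically on $V$ (which exists by ampleness of $H$); then multiplication by $\sigma$ injects the image of $H^0(X,mL)$ in $H^0(V,mL|_V)$ into the image of $H^0(X,m(L+\ve H))$ in $H^0(V,m(L+\ve H)|_V)$.

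The main obstacle I anticipate is the verification of $(\star)$ in the delicate subcase $V\subset E_{nK}(L')\setminus E_{nn}(L')$ (where $V$ is necessarily divisorial), together with the continuity of the numerical restricted volume under the perturbation $L\mapsto L+\ve H$; both involve controlling a two-parameter limit over admissible currents and the regularization parameter as the class degenerates from big toward merely pseudoeffective.
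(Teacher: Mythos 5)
Your intermediate claim $(\star)$ --- that $\vol_{X|V}(L')=\langle c_1(L')^{\dim V}\rangle_{X|V}$ for \emph{every} big line bundle $L'$ --- is false, and this is a genuine gap. Example \ref{exx} of the paper exhibits a nef and big line bundle $\ti{L}_2$ on a threefold and a curve $V\subset\mathbb{B}(\ti{L}_2)\subset\mathbb{B}_+(\ti{L}_2)=E_{nK}(c_1(\ti{L}_2))$ with $\langle c_1(\ti{L}_2)\rangle_{X|V}=1$ while $\vol_{X|V}(\ti{L}_2)=0$. Since $\ti{L}_2$ is nef, $E_{nn}(c_1(\ti{L}_2))=\emptyset$, so this is exactly your ``delicate subcase'' $V\subset E_{nK}\setminus E_{nn}$: there neither side need vanish, the two sides can differ, and $V$ need not be divisorial (it is a curve in a threefold here), so the reduction to the divisorial part of the negative Zariski part is not available. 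Relatedly, \cite[Theorem 5.7]{ELMNP} does not give $\vol_{X|V}(L')=0$ whenever $V\subset\mathbb{B}_+(L')$; it concerns irreducible components of $\mathbb{B}_+$ and a limit over perturbing divisors, and indeed $\vol_{X|V}(\ti{L}_1)>0$ in the same example even though $V\subset\mathbb{B}_+(\ti{L}_1)$.

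The second problem is your limiting step: you invoke ``continuity of the numerical restricted volume under $L\mapsto L+\ve H$,'' but $[\alpha]\mapsto\langle\alpha^{\dim V}\rangle_{X|V}$ is \emph{not} continuous on the big cone (Example \ref{exx2}), and the known continuity (on the set of classes with $V\not\subset E_{nK}$) does not cover the relevant situation $V\subset E_{nK}(c_1(L))$. The paper's proof avoids both difficulties. If $V\not\subset E_{nn}(c_1(L))=\mathbb{B}_-(L)$, then $\mathbb{B}_+(L+\ve H)\subset\mathbb{B}_-(L)$ shows $V\not\subset\mathbb{B}_+(L+\ve H)$ for every rational $\ve>0$, so Lemma \ref{volr} and Theorem \ref{hisamat} identify the supremum over closed positive currents in $c_1(L+\ve H)$ with analytic singularities not containing $V$ with both $\langle c_1(L+\ve H)^{\dim V}\rangle_{X|V}$ and $\vol_{X|V}(L+\ve H)$; and then no continuity is needed at all, because choosing $\omega$ to be a K\"ahler form in $c_1(H)$ (the definition \eqref{voldef} is independent of $\omega$ and monotone in $\ve$, so rational $\ve$ suffice) the limit of these suprema \emph{is} the definition of $\langle c_1(L)^{\dim V}\rangle_{X|V}$. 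If instead $V\subset\mathbb{B}_-(L)$, one notes $V\subset\mathbb{B}(L+\ve H)$, hence $\vol_{X|V}(L+\ve H)=0$, matching the convention $\langle c_1(L)^{\dim V}\rangle_{X|V}=0$; note also that your identity $E_{nn}(L)=\bigcap_{\ve>0}E_{nK}(L+\ve H)$ has the wrong operation --- the correct statement is a union, not an intersection. Your argument for the monotonicity $\vol_{X|V}(L)\leq\vol_{X|V}(L+\ve H)$, via multiplication by a section of $m\ve H$ not vanishing identically on $V$, is fine and is all that is needed for \eqref{oneway} once \eqref{regular} is established.
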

By homogeneity, this result also holds when $L$ and $H$ are just $\mathbb{Q}$-divisors.
The quantity on the right hand side of \eqref{regular} has been recently considered explicitly in \cite{CHPW}. In fact, more is true. When $X$ is projective and $V\not\subset \mathbb{B}_+(L)=E_{\rm nK}(c_1(L))$ (see e.g. \cite{ELMNP2} for more on $\mathbb{B}_+$ and $\mathbb{B}_-$, and \cite[Theorem 2.3]{To3} for this equality) then we simply have
$$\langle c_1(L)^{\dim V}\rangle_{X|V}=\vol_{X|V}(L),$$
thanks to \cite{Hi,Ma} (see Theorem \ref{hisamat}). When $V\subset \mathbb{B}_-(L)=E_{\rm nn}(c_1(L))$ we have
$$\langle c_1(L)^{\dim V}\rangle_{X|V}=\vol_{X|V}(L)=0.$$
This is because $\langle c_1(L)^{\dim V}\rangle_{X|V}=0$ by definition, while $\vol_{X|V}(L)=0$ since $\mathbb{B}_-(L)\subset\mathbb{B}(L)$ (the stable base locus of $L$) and it is clear from the definition that $\vol_{X|V}(L)=0$ holds whenever $V\subset\mathbb{B}(L)$ (this observation also justifies our definition of $\langle \alpha^{\dim V}\rangle_{X|V}=0$ when $V\subset E_{\rm nn}(\alpha)$).

But when $V\subset \mathbb{B}_+(L)$ in general we have that
$$\langle c_1(L)^{\dim V}\rangle_{X|V}\neq\vol_{X|V}(L),$$
see Examples \ref{exx} and \ref{exx2} (or \cite[Example 2.3]{CHPW} for another example). Note that this example shows that $\vol_{X|V}(L)$ is {\em not} in general a numerical invariant of the line bundle $L$, while obviously $\langle c_1(L)^{\dim V}\rangle_{X|V}$ is.

It is also important to note that given $V\subset X$, the map $[\alpha]\mapsto \langle \alpha^{\dim V}\rangle_{X|V}$ is not continuous in general on the big cone (unlike the usual volume function), see Example \ref{exx2}.

In the algebraic setting, we show that Conjecture \ref{conjec} holds in certain important cases thanks to existing work in the literature.
Using Theorem \ref{regg} together with the main result of Ein-Lazarsfeld-Musta\c{t}\u{a}-Nakamaye-Popa \cite[Theorem 5.7]{ELMNP} we show that:
\begin{prop}\label{prob1}
Conjecture \ref{conjec} holds when $X$ is projective and $[\alpha]=c_1(D)$ for $D$ a big $\mathbb{R}$-divisor.
\end{prop}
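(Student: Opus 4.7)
The plan is to combine Theorem \ref{regg} with the main theorem of \cite{ELMNP} (their Theorem 5.7), which asserts that for any big $\mathbb{R}$-divisor $L$ on a projective manifold $X$ and any positive-dimensional irreducible subvariety $W\subseteq X$, one has $\vol_{X|W}(L)=0$ if and only if $W\subseteq \mathbb{B}_+(L)$.

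Let $V$ be an irreducible component of $E_{nK}(\alpha)=\mathbb{B}_+(D)$. If $V\subseteq E_{nn}(\alpha)=\mathbb{B}_-(D)$ then $\langle c_1(D)^{\dim V}\rangle_{X|V}=0$ by definition, so I may assume $V\not\subseteq \mathbb{B}_-(D)$ and $\dim V>0$. Fix any ample $\mathbb{Q}$-divisor $H$. By Theorem \ref{regg}, passed from line bundles to $\mathbb{Q}$-divisors by the homogeneity remark (and to big $\mathbb{R}$-divisors by an approximation argument on the big cone),
$$\langle c_1(D)^{\dim V}\rangle_{X|V}=\lim_{\substack{\ve\downarrow 0 \\ \ve\in\mathbb{Q}_{>0}}}\vol_{X|V}(D+\ve H),$$
so it suffices to show this limit is zero. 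For each such $\ve$, $D+\ve H$ is a big $\mathbb{Q}$-divisor, and \cite[Theorem 5.7]{ELMNP} reduces the vanishing $\vol_{X|V}(D+\ve H)=0$ to the containment $V\subseteq \mathbb{B}_+(D+\ve H)$. Moreover, the standard monotonicity $\mathbb{B}_+(D+\ve H)\subseteq \mathbb{B}_+(D)$ under ample perturbations makes any such containment automatically place $V$ as an irreducible component of the perturbed augmented base locus.

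The heart of the proof—and the principal obstacle—is therefore to establish $V\subseteq \mathbb{B}_+(D+\ve H)$ for all sufficiently small rational $\ve>0$. The guiding idea is the following: if $V\not\subseteq \mathbb{B}_+(D+\ve H)$, then by the Kodaira-type characterization of $\mathbb{B}_+$ there exists a numerical decomposition $D+\ve H\equiv A'+F'$ with $A'$ an ample $\mathbb{R}$-divisor, $F'$ an effective $\mathbb{R}$-divisor, and $V\not\subseteq \mathrm{Supp}(F')$. Rewriting as $D\equiv(A'-\ve H)+F'$ yields a decomposition of $D$ avoiding $V$—provided $A'-\ve H$ is still ample, which holds for $\ve$ sufficiently small by openness of the ample cone—contradicting $V\subseteq \mathbb{B}_+(D)$. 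The hard part will be extracting this uniformly in $\ve$: one must produce a single $\ve_0>0$ such that for every $\ve\in(0,\ve_0)\cap\mathbb{Q}$ and every admissible decomposition of $D+\ve H$, the displacement $A'-\ve H$ remains ample. This uniform Seshadri-type control is the key technical step, after which the desired vanishing of the limit is immediate.
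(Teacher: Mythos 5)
Your proposal breaks down at what you yourself identify as ``the heart of the proof,'' and the reason is that the statement you are trying to prove there is false. You quote \cite[Theorem 5.7]{ELMNP} as the biconditional ``$\vol_{X|W}(L)=0$ if and only if $W\subseteq\mathbb{B}_+(L)$,'' but the implication $W\subseteq\mathbb{B}_+(L)\Rightarrow\vol_{X|W}(L)=0$ is not true: in Example \ref{exx} of this paper one has $V\subset\mathbb{B}(\ti{L}_2)\subseteq\mathbb{B}_+(\ti{L}_2)=\mathbb{B}_+(\ti{L}_1)$ (since $\mathbb{B}_+$ is a numerical invariant) and yet $\vol_{X|V}(\ti{L}_1)>0$. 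What \cite[Theorem 5.7]{ELMNP} actually asserts is a statement about a \emph{limit}: if $V$ is an irreducible component of $\mathbb{B}_+(D)$, then $\vol_{X|V}(D')\to 0$ as the class of the rational divisor $D'$ tends to that of $D$, even though the individual terms $\vol_{X|V}(D')$ are in general strictly positive.

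Consequently your plan to deduce $\vol_{X|V}(D+\ve H)=0$ term by term, via the containment $V\subseteq\mathbb{B}_+(D+\ve H)$ for all small rational $\ve>0$, cannot be carried out in the case you have reduced to. Since $\mathbb{B}_+(D+\ve H)=\mathbb{B}\bigl(D+(\ve-t)H\bigr)$ for $0<t\ll\ve$, the loci $\mathbb{B}_+(D+\ve H)$ \emph{increase} as $\ve\downarrow 0$ and their union is $\mathbb{B}_-(D)$; hence $V\subseteq\mathbb{B}_+(D+\ve H)$ for some $\ve>0$ forces $V\subseteq\mathbb{B}_-(D)$, which you have already excluded. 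A concrete counterexample to the desired containment: on the blow-up of $\mathbb{P}^2$ at a point take $D=\pi^*\mathcal{O}(1)$ and $V$ the exceptional curve; then $V$ is a component of $\mathbb{B}_+(D)$, but $D+\ve H$ is ample, so $\mathbb{B}_+(D+\ve H)=\emptyset$ for every $\ve>0$. So no ``uniform Seshadri-type control'' exists, and in fact the paper's argument uses precisely the opposite fact: $V\not\subseteq E_{nK}(D+\ve H)=\mathbb{B}_+(D+\ve H)$, which is what allows Lemma \ref{volr} and Theorem \ref{hisamat} to identify the supremum over currents at level $\ve$ with $\vol_{X|V}(D+\ve H)$ (working directly from the definition along a sequence $\ve_j\to 0$ with $D+\ve_jH$ rational --- safer than your ``approximation on the big cone,'' since the numerical restricted volume is not continuous there, cf.\ Example \ref{exx2}); the vanishing of the limit $\lim_{\ve\to 0}\vol_{X|V}(D+\ve H)$ is then exactly the nontrivial content of \cite[Theorem 5.7]{ELMNP}, quoted as a black box, not something to be re-derived from a containment of base loci.
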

In fact, we also show in Proposition \ref{prob2} that conversely the validity of Conjecture \ref{conjec} would give a transcendental proof of \cite[Theorem 5.7]{ELMNP}.

Following Boucksom-Favre-Jonsson \cite{BFJ} in the algebraic case, we observe that a recent result of Witt-Nystr\"om \cite{WN} implies:
\begin{prop}\label{ortorv}
Conjecture \ref{conjec} (in its equivalent form \eqref{vol0}) holds when $X$ is projective, $[\alpha]$ is any big $(1,1)$ class, and $V$ is a prime divisor.
\end{prop}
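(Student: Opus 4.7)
The plan is to adapt the Boucksom-Favre-Jonsson argument \cite{BFJ} from the projective case, using Witt-Nyström's recent transcendental orthogonality relation \cite{WN} as the key input. In the algebraic setting, for a prime divisor $V\subset\mathbb{B}_+(L)$ one perturbs $L$ to $L-tA$ for $A$ ample and $t>0$ small, so that $V$ becomes a divisorial component of the negative part $N(L-tA)$; orthogonality of the Zariski decomposition then gives $\langle (L-tA)^{n-1}\rangle\cdot [V]=0$ for each such $t$, and one concludes by letting $t\to 0^+$ and invoking continuity of the mobile intersection product. We first reduce to the nontrivial case: if $V\subset E_{nn}(\alpha)$ then $\langle\alpha^{n-1}\rangle_{X|V}=0$ is built into our definition. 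Hence we may assume that $V$ is a prime divisor, an irreducible component of $E_{nK}(\alpha)$, with $V\not\subset E_{nn}(\alpha)$; in particular $\nu(\alpha,V)=0$ and $V$ does not appear in $N(\alpha)$.

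Fix a Kähler form $\omega$ on $X$ and set $\alpha_t:=\alpha-t\omega$, which remains big for $t>0$ small. The first key step is to show that $V\subset E_{nn}(\alpha_t)$ for all sufficiently small $t>0$; this is the transcendental analog of the projective characterization $\mathbb{B}_+(L)=\mathbb{B}(L-tA)$ used in \cite{BFJ}. Indeed, if $V\not\subset E_{nn}(\alpha_t)$ for some such $t>0$, there would exist a closed positive current $S\in[\alpha_t]$ with generic Lelong number $\nu(S,V)=0$; then $S+t\omega$ would be a Kähler current in $[\alpha]$ with $\nu(S+t\omega,V)=0$, and Demailly's regularization theorem would produce a Kähler current in $[\alpha]$ with analytic singularities not containing $V$, contradicting $V\subset E_{nK}(\alpha)$. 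It follows that $V$ is a divisorial component of $N(\alpha_t)$ with positive multiplicity.

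With this at hand, Witt-Nyström's orthogonality result \cite{WN} for the divisorial Zariski decomposition on compact Kähler manifolds gives
$$\langle\alpha_t^{n-1}\rangle\cdot[V]=0$$
for every such $t>0$. Letting $t\to 0^+$ and invoking continuity of the map $\beta\mapsto\langle\beta^{n-1}\rangle\cdot[V]$ on the big cone \cite{BoT,BEGZ}, we conclude $\langle\alpha^{n-1}\rangle\cdot[V]=0$. Finally, since $V\not\subset E_{nn}(\alpha)$, the restricted intersection $\langle\alpha^{n-1}\rangle_{X|V}$ agrees with (or at worst is bounded above by) the global cohomological intersection $\langle\alpha^{n-1}\rangle\cdot[V]$, thanks to the positivity of the absolutely continuous part appearing in the defining integral, and the desired vanishing follows.

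The principal obstacle is the perturbation claim $V\subset E_{nn}(\alpha_t)$ for small $t>0$, which requires a clean interaction between the non-Kähler locus and the non-nef locus under a small negative twist by a Kähler class, and in the transcendental setting relies on Demailly's regularization theorem together with careful bookkeeping of generic Lelong numbers along $V$. Granting this, the orthogonality and continuity steps are routine.
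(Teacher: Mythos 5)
Your argument is correct and essentially the same as the paper's: you perturb to $\alpha-t\omega$ so that $V$ becomes a divisorial component of the negative part of the divisorial Zariski decomposition, invoke Witt-Nystr\"om's orthogonality (the paper's \eqref{goo2}) to get $\langle(\alpha-t\omega)^{n-1}\rangle\cdot[V]=0$, let $t\to 0$ by continuity of the mobile product on the big cone, and conclude via the comparison $\langle\alpha^{n-1}\rangle_{X|V}\leq\langle\alpha^{n-1}\rangle\cdot[V]$, which is precisely the paper's inequality \eqref{ineqqq} (proved there using Lemma \ref{simple}). Two minor points: Witt-Nystr\"om's orthogonality is known for projective $X$ (not, as you state, for general compact K\"ahler manifolds), which suffices here since $X$ is assumed projective, and your final comparison step really requires the short Fujita-approximation argument given in the paper rather than a one-line appeal to positivity of the absolutely continuous part.
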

We now discuss a few applications of Conjecture \ref{conjec}.
As in \cite{ELMNP, Nak2}, we draw a connection between restricted volumes and moving Seshadri constants. These are a generalization, first introduced in \cite{Nak2}, of the usual Seshadri constants, which are defined for nef classes, to arbitrary $(1,1)$ classes. Given a $(1,1)$ class $[\alpha]$, we define the moving Seshadri constant $\ve(\|\alpha\|,x)$ as follows: if $x\in E_{\rm nK}(\alpha)$ we set $\ve(\|\alpha\|,x)=0$, and otherwise we set
$$\ve(\|\alpha\|,x)=\sup_{\mu^*[\alpha]=[\beta]+[E]}\ve(\beta,\mu^{-1}(x)),$$
where the supremum is over all modifications $\mu:\ti{X}\to X$, which are isomorphisms near $x$, and over all decompositions
$\mu^*[\alpha]=[\beta]+[E]$ where $[\beta]$ is a K\"ahler class and $E$ is an effective $\mathbb{R}$-divisor which does not contain $\mu^{-1}(x)$, and where $\ve$ denotes the usual Seshadri constant of a nef class. Clearly the moving Seshadri constants are trivial if $[\alpha]$ is not big. This definition agrees with the algebraic definition in \cite{ELMNP, Nak2} when $X$ is projective and $[\alpha]=c_1(D)$
for a big $\mathbb{R}$-divisor $D$.
For a nef class $[\alpha],$ the Seshadri constants are also given by the formula
\begin{equation}\label{subvar}
\ve(\alpha,x)=\inf_{V\ni x}\left(\frac{\int_V\alpha^{\dim V}}{\mathrm{mult}_x V}\right)^{\frac{1}{\dim V}},
\end{equation}
which is proved in \cite[Theorem 2.8]{To} (cf. \cite[Proposition 5.1.9]{Laz} in the algebraic case), and the infimum is achieved when $[\alpha]$ is K\"ahler.
The following is a generalization of this result to moving Seshadri constants, in analogy with \cite[Proposition 6.7]{ELMNP}:

\begin{thm}\label{movable2} Assume Conjecture \ref{conjec}.
Given any pseudoeffective class $[\alpha]$ and any $x\in X$ we have
$$\ve(\|\alpha\|,x)=\inf_{V\ni x}\left(\frac{\langle \alpha^{\dim V}\rangle_{X|V}}{\mathrm{mult}_x V}\right)^{\frac{1}{\dim V}},$$
where the infimum is over all irreducible positive-dimensional analytic subvarieties $V$ containing $x$.
\end{thm}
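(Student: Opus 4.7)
The plan is to treat the case $x\in E_{nK}(\alpha)$ separately and then establish both inequalities when $x\notin E_{nK}(\alpha)$. If $x\in E_{nK}(\alpha)$, both sides vanish: the left by definition of the moving Seshadri constant, the right because Conjecture \ref{conjec} gives an irreducible positive-dimensional component $V\ni x$ of $E_{nK}(\alpha)=\Null(\alpha)$ with $\langle\alpha^{\dim V}\rangle_{X|V}=0$. Assume henceforth $x\notin E_{nK}(\alpha)$, and note that Conjecture \ref{conjec} then also forces $\langle\alpha^{\dim V}\rangle_{X|V}>0$ for every positive-dimensional irreducible $V\ni x$ (such $V$ cannot be contained in $E_{nK}(\alpha)=\Null(\alpha)$), so the infimum on the right only involves subvarieties of strictly positive numerical restricted volume.

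For the inequality $\ve(\|\alpha\|,x)\le\inf_V(\cdots)$, fix $V\ni x$ and a modification $\mu:\ti X\to X$ that is an isomorphism near $x$, together with a decomposition $\mu^*[\alpha]=[\beta]+[E]$ with $[\beta]$ K\"ahler and $E$ an effective $\mathbb{R}$-divisor missing $\mu^{-1}(x)$. The strict transform $\ti V$ then passes through $\mu^{-1}(x)$ with $\mathrm{mult}_{\mu^{-1}(x)}\ti V=\mathrm{mult}_x V$ and is not contained in $E$. Applying \eqref{subvar} (via \cite[Theorem 2.8]{To}) to the K\"ahler class $[\beta]$ at $\mu^{-1}(x)$ with test subvariety $\ti V$ gives
$$\ve(\beta,\mu^{-1}(x))\le\left(\frac{\int_{\ti V}\beta^{\dim V}}{\mathrm{mult}_x V}\right)^{1/\dim V}.$$
Demailly regularization of the pushforward current $\mu_*(\beta+[E])\in[\alpha]$ produces admissible $T$ in the definition of $\langle\alpha^{\dim V}\rangle_{X|V}$ whose absolutely continuous restriction to $V_{reg}$ integrates to a value approaching $\int_{\ti V}\beta^{\dim V}$, so $\int_{\ti V}\beta^{\dim V}\le\langle\alpha^{\dim V}\rangle_{X|V}$. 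Taking the supremum over $(\mu,\beta,E)$ and then the infimum over $V$ yields this direction.

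For the reverse inequality, fix $\delta>0$ and choose $(\mu,\beta,E)$ with $\ve(\beta,\mu^{-1}(x))\ge\ve(\|\alpha\|,x)-\delta$. Since $[\beta]$ is K\"ahler the infimum in \eqref{subvar} is attained on $\ti X$, giving a positive-dimensional $W\ni\mu^{-1}(x)$ that realizes it. Set $V:=\mu(W)$, so $\dim V=\dim W$ and $\mathrm{mult}_x V=\mathrm{mult}_{\mu^{-1}(x)}W$. The task reduces to the comparison
$$\langle\alpha^{\dim V}\rangle_{X|V}\le\int_W\beta^{\dim V}+O(\delta),$$
which, combined with $\int_W\beta^{\dim V}=\mathrm{mult}_x V\cdot\ve(\beta,\mu^{-1}(x))^{\dim V}$, produces the desired estimate after dividing by $\mathrm{mult}_x V$ and taking $\dim V$-th roots. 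To prove the comparison I would pass to a further modification $\pi:\ti X'\to\ti X$ on which $\pi^*\mu^*[\alpha]$ admits a decomposition $[\beta']+[E']$ whose positive part $[\beta']$ realizes a sharp Fujita-type approximation for $\langle\alpha^{\dim V}\rangle_{X|V}$ along the strict transform of $V$, and then compare $[\beta']$ with $\pi^*[\beta]$ using Conjecture \ref{conjec} together with the orthogonality of the divisorial Zariski decomposition to control the cross terms in the expansion of $(\beta+[E])^{\dim V}|_{\ti V}$. The main obstacle is precisely this simultaneous optimization: the decomposition approximating $\ve(\|\alpha\|,x)$ and the one giving a sharp Fujita-type approximation for $\langle\alpha^{\dim V}\rangle_{X|V}$ a priori live on different bimeromorphic models, and reconciling them on a common refinement while keeping both approximations sharp is where Conjecture \ref{conjec} is genuinely used, playing in the transcendental setting the role that $\mathbb{B}_+=\mathrm{Null}$ (\cite[Theorem 5.7]{ELMNP}) plays in the parallel algebraic argument \cite[Proposition 6.7]{ELMNP}.
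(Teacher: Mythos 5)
Your first inequality $\ve(\|\alpha\|,x)\le\inf_V(\cdots)$ is fine and follows the paper's route (Theorem \ref{movable}): for each admissible $(\mu,\beta,E)$ one has $\ve(\beta,\mu^{-1}(x))\le(\int_{\ti{V}}\beta^{\dim V}/\mathrm{mult}_xV)^{1/\dim V}$ by \eqref{subvar}, and $\int_{\ti{V}}\beta^{\dim V}\le\langle\alpha^{\dim V}\rangle_{X|V}$ via the pushforward current and Fatou. The problem is the reverse inequality. Your reduction requires, for the specific $W$ achieving the Seshadri infimum of a decomposition chosen only to nearly maximize $\ve(\beta,\mu^{-1}(x))$, the comparison $\langle\alpha^{\dim V}\rangle_{X|V}\le\int_W\beta^{\dim V}+O(\delta)$ with $V=\mu(W)$. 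This is exactly the opposite of what the easy direction gives ($\int_W\beta^{\dim V}\le\langle\alpha^{\dim V}\rangle_{X|V}$, possibly with a large gap), and nothing in your choice of $(\mu,\beta,E)$ -- made with reference only to the single point $x$ -- controls its behaviour along the a posteriori subvariety $W$. The proposed fix (pass to a common model, compare $[\beta']$ with $\pi^*[\beta]$ ``using Conjecture \ref{conjec} together with the orthogonality of the divisorial Zariski decomposition'') is not an argument: orthogonality in the sense of \eqref{goo} is an open conjecture on general K\"ahler manifolds (Section \ref{sect8}), and even granting it there is no mechanism supplied for transferring near-optimality at $x$ into near-optimality of restricted volumes along $V$. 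So the hard half of the theorem is not proved.

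The paper sidesteps this simultaneous-optimization issue entirely, and this is the idea your proposal is missing: instead of starting from a Seshadri-optimal decomposition, start from the currents $T_\ve\in[\alpha]$, $T_\ve\ge-\ve\omega$, of Lemma \ref{simple}, obtained by regularizing minimal-singularity currents in $[\alpha+\ve\omega]$. By \cite[Theorem 1.16]{BEGZ} these dominate every admissible competitor, so \eqref{simp1} holds for \emph{all} $V\not\subset E_{nn}(\alpha)$ simultaneously. Resolving $T_\ve$ and perturbing by $2\ve\omega$ and a small exceptional term gives a K\"ahler class $[\beta_\ve]$ with $\mu_\ve^*(\alpha+2\ve\omega)=[\beta_\ve]+[E_\ve]$; the subvariety $\ti{V}_\ve$ achieving the Seshadri infimum \eqref{subvar} for $[\beta_\ve]$ then automatically satisfies $\int_{\ti{V}_\ve}\beta_\ve^k\ge(1-\gamma_\ve)^k\langle\alpha^k\rangle_{X|V_\ve}$, because the uniform inequality \eqref{simp1} applies to whatever $V_\ve=\mu_\ve(\ti{V}_\ve)$ turns out to be. This yields $\ve(\|\alpha+2\ve\omega\|,x)\ge(1-\gamma_\ve)\inf_{V\ni x}(\langle\alpha^{\dim V}\rangle_{X|V}/\mathrm{mult}_xV)^{1/\dim V}$, and the limit $\ve\to0$ is handled by the continuity of moving Seshadri constants (Theorem \ref{contt}), which is where Conjecture \ref{conjec} actually enters, rather than through any orthogonality statement. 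Without this uniform Fujita-type input (or an equivalent device), your outline cannot be completed as written.
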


The following result reproves and extends the main result of \cite{Nak2}.
\begin{thm}\label{contt}
Assume Conjecture \ref{conjec}.
Given any $x\in X$, the map $[\alpha]\mapsto \ve(\|\alpha\|,x)$ is continuous as $[\alpha]$ varies in $H^{1,1}(X,\mathbb{R})$.
\end{thm}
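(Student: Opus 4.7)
My plan is to prove lower and upper semi-continuity of $[\alpha]\mapsto\ve(\|\alpha\|,x)$ separately, exploiting the sup-definition of the moving Seshadri constant for the former and the infimum formula of Theorem \ref{movable2} for the latter, and handling the boundary of the big cone via the trivial bound by the volume.

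For lower semi-continuity, suppose $\ve(\|\alpha\|,x)=s>0$ and fix $\delta>0$. By the sup-definition there exist a modification $\mu:\ti{X}\to X$ isomorphic near $x$ and a decomposition $\mu^*[\alpha]=[\beta]+[E]$ with $[\beta]$ K\"ahler, $E$ an effective $\mathbb{R}$-divisor not containing $\mu^{-1}(x)$, and $\ve(\beta,\mu^{-1}(x))\geq s-\delta$. For $[\alpha']$ close to $[\alpha]$, set $[\beta']:=[\beta]+\mu^*([\alpha']-[\alpha])$, so that $\mu^*[\alpha']=[\beta']+[E]$ is a decomposition of the same type. Since the K\"ahler cone is open, $[\beta']$ stays K\"ahler nearby, so $\ve(\|\alpha'\|,x)\geq\ve(\beta',\mu^{-1}(x))$. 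Continuity of the Seshadri constant on the K\"ahler cone---immediate from \eqref{subvar} displaying it as an infimum of the continuous polynomial functions $[\beta]\mapsto\int_V\beta^{\dim V}$---then yields $\liminf_{[\alpha']\to[\alpha]}\ve(\|\alpha'\|,x)\geq s-\delta$, and sending $\delta\downarrow 0$ concludes this direction on the big cone. Outside the big cone the function vanishes identically, so lower semi-continuity is automatic there.

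For upper semi-continuity I would combine the infimum formula
$$\ve(\|\alpha\|,x)=\inf_{V\ni x}\left(\frac{\langle\alpha^{\dim V}\rangle_{X|V}}{\mathrm{mult}_x V}\right)^{1/\dim V}$$
from Theorem \ref{movable2} with the key technical assertion that for each fixed $V$ the map $[\alpha]\mapsto\langle\alpha^{\dim V}\rangle_{X|V}$ is upper semi-continuous on the big cone. Granted this, an infimum of upper semi-continuous functions is upper semi-continuous, and composition with $y\mapsto y^{1/\dim V}$ preserves the property, so $\ve(\|\cdot\|,x)$ is upper semi-continuous on the big cone. At a non-big boundary point $[\alpha]$ we have $\ve(\|\alpha\|,x)=0$; extracting the single term $V=X$ from the infimum gives $\ve(\|\alpha'\|,x)\leq\vol(\alpha')^{1/n}$ for nearby big $[\alpha']$, and Boucksom's continuity \cite{Bo} of the volume function on all of $H^{1,1}(X,\mathbb{R})$ then forces $\ve(\|\alpha_k\|,x)\to 0$ along any sequence $[\alpha_k]\to[\alpha]$.

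The main obstacle I anticipate is the upper semi-continuity of the restricted volume $[\alpha]\mapsto\langle\alpha^{\dim V}\rangle_{X|V}$ on the big cone. The natural route is to rewrite the defining $\lim_{\ve\downarrow 0}\sup_T$ as an infimum over $\ve>0$ (the inner supremum being monotone decreasing as $\ve\downarrow 0$) and argue that each fixed-$\ve$ supremum is continuous in $[\alpha]$ by perturbing admissible currents by smooth $(1,1)$-forms representing $[\alpha']-[\alpha]$; this requires some care in controlling non-pluripolar Monge--Amp\`ere masses as the class varies, in the spirit of the regularization arguments in Boucksom's thesis. Once this technical property is in hand, the two semi-continuities combine to give continuity of $[\alpha]\mapsto\ve(\|\alpha\|,x)$ on all of $H^{1,1}(X,\mathbb{R})$.
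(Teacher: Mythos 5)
Your lower semicontinuity argument and your treatment of non-big limit classes via the bound $\ve(\|\alpha'\|,x)\leq\vol(\alpha')^{1/n}$ are fine, and the technical ingredient you single out as the main obstacle (upper semicontinuity of $[\alpha]\mapsto\langle\alpha^{\dim V}\rangle_{X|V}$ for fixed $V$) is exactly Lemma \ref{semi} of the paper, so that is not where the problem lies. The genuine gap is the upper semicontinuity step at big classes $[\alpha]$ with $x\notin E_{nK}(\alpha)$: there you invoke the full equality of Theorem \ref{movable2}, and what your argument actually requires is the inequality $\inf_{V\ni x}\bigl(\langle\alpha^{\dim V}\rangle_{X|V}/\mathrm{mult}_x V\bigr)^{1/\dim V}\leq\ve(\|\alpha\|,x)$. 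But in this paper that direction of Theorem \ref{movable2} is proved \emph{after}, and \emph{by means of}, Theorem \ref{contt}: the Fujita-type currents $T_\ve\geq-\ve\omega$ of Lemma \ref{simple} only yield $\ve(\|\alpha+2\ve\omega\|,x)\geq(1-\gamma_\ve)\inf_V(\cdots)$, and continuity is precisely what allows the passage to $\ve(\|\alpha\|,x)$ as $\ve\downarrow 0$. So as written your proof is circular; the only half available before Theorem \ref{contt} is Theorem \ref{movable}, and it gives the inequality in the wrong direction for upper semicontinuity at such points. (A smaller slip: \eqref{subvar} exhibits $\ve(\cdot,x)$ as an infimum of continuous functions, which only gives upper semicontinuity, whereas your lower semicontinuity step needs the opposite direction; quote instead the continuity, or the superadditivity, of Seshadri constants on the nef cone, as the paper does.)

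The paper's proof avoids all of this with a softer argument at interior points: on the open convex set $\mathcal{A}_x$ of big classes with $x\notin E_{nK}(\alpha)$, the function $\alpha\mapsto\ve(\|\alpha\|,x)$ is superadditive, hence concave, hence continuous, with no infimum formula needed there; and at classes with $x\in E_{nK}(\alpha)$ it combines Theorem \ref{movable} (the upper-bound half only) with Proposition \ref{con} applied to an irreducible component $V$ of $E_{nK}(\alpha)$ through $x$. That boundary case is essentially the part of your scheme that does work, since there the needed reverse inequality is trivial because $\ve(\|\alpha\|,x)=0$. If you wish to keep your structure, restrict the use of the infimum formula to that case and replace your interior upper semicontinuity argument by the concavity argument (or else supply an independent proof of the lower bound in Theorem \ref{movable2} that does not pass through continuity).
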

We also show that Conjecture \ref{conjec} implies a positive answer to a question of Boucksom \cite{BoT} about a slightly different generalization of Seshadri constants to pseudoeffective classes, which turns out to agree with the moving Seshadri constants, see Theorem \ref{bouses}.

Our last application is a generalization of the local ampleness criterion of Takayama \cite[Proposition 2.1]{Ta2}:
\begin{thm}\label{localamp}
Let $X$ be a compact K\"ahler manifold and $[\alpha]$ a big class. Let $T\geq 0$ be a closed positive current in the class $[\alpha]$ which is a smooth K\"ahler metric on a nonempty open set $U\subset X$. Then, assuming Conjecture \ref{conjec}, we have that $U\subset E_{\rm nK}(\alpha)^c$.
\end{thm}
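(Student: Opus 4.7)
The plan is to argue by contradiction. Suppose that $U\cap E_{nK}(\alpha)\neq\emptyset$ and fix $x$ in the intersection. Conjecture \ref{conjec} gives $E_{nK}(\alpha)=\Null(\alpha)$, so there exists an irreducible positive-dimensional analytic subvariety $V\ni x$ with $\langle \alpha^{\dim V}\rangle_{X|V}=0$. Since $V$ is irreducible and $V\cap U$ is a nonempty open subset of $V$, the set $V_{reg}\cap U$ is a nonempty open subset of $V_{reg}$. The goal is then to produce, for every $\ve>0$, an admissible test current in the definition of $\langle \alpha^{\dim V}\rangle_{X|V}$ yielding a uniform positive lower bound on the integral, contradicting the vanishing of the numerical restricted volume.

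The test currents will come from applying Demailly's regularization theorem to the closed positive $(1,1)$-current $T$: this produces a sequence $T_k\in[\alpha]$ with analytic singularities along analytic sets $Z_k\subset X$, satisfying $T_k\geq -\ve_k\omega$ with $\ve_k\downarrow 0$. The two features I will use are: (i) $Z_k$ is contained in the set of points where the Lelong numbers of $T$ are at least some positive threshold, and (ii) on any open set where $T$ is smooth, $T_k$ is smooth and its local potentials converge to those of $T$ in $C^\infty_{\mathrm{loc}}$. Since $T$ is a smooth K\"ahler form on $U$, property (i) forces $Z_k\cap U=\emptyset$ for every $k$, and since $V\cap U\subset V\setminus Z_k$ is nonempty, we have $V\not\subset Z_k$, so the analytic singularities of $T_k$ do not contain $V$.

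Pick a point $y\in V_{reg}\cap U$ and a relatively compact open neighborhood $W\Subset U$ of $y$ with $T\geq c_0\omega$ on $W$ for some $c_0>0$ (possible since $T$ is smooth K\"ahler on $U$). Property (ii) then gives $T_k\geq\tfrac{c_0}{2}\omega$ on $W$ for all $k$ large enough. For any $\ve>0$, choose $k$ with $\ve_k\leq\ve$: then $T_k$ is an admissible test current for $\langle \alpha^{\dim V}\rangle_{X|V}$, and on $W$ every form involved is smooth, so
\begin{equation*}
\int_{V_{reg}}\bigl((T_k+\ve\omega)|_{V_{reg}}\bigr)_{ac}^{\dim V}\;\geq\;\int_{V_{reg}\cap W}(T_k|_{V_{reg}})^{\dim V}\;\geq\;\Bigl(\tfrac{c_0}{2}\Bigr)^{\dim V}\int_{V_{reg}\cap W}(\omega|_{V_{reg}})^{\dim V}=:c>0,
\end{equation*}
with $c$ independent of $\ve$ and $k$. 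Taking the supremum over admissible currents and then $\ve\downarrow 0$ gives $\langle \alpha^{\dim V}\rangle_{X|V}\geq c>0$, the desired contradiction.

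The main technical point of this plan is property (ii): one must ensure that Demailly's approximants converge in $C^\infty_{\mathrm{loc}}$ on the smooth locus of $T$, not merely weakly or in $L^1$, in order to get the pointwise lower bound $T_k\geq\tfrac{c_0}{2}\omega$ on $W$. This is a standard feature of the Ohsawa--Takegoshi-based construction but is the step I would expect to require the most careful justification. Once that ingredient is in place, the rest of the argument is essentially formal: Conjecture \ref{conjec} translates the non-K\"ahler statement into the vanishing of a numerical restricted volume on some $V$ meeting $U$, and the smooth K\"ahler behavior of $T$ on $U$ makes this vanishing impossible.
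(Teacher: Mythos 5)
Your argument is essentially the paper's: the published proof consists precisely of the observation that for any irreducible $V$ through a point of $U$ one has $\langle\alpha^{\dim V}\rangle_{X|V}\geq\int_{V_{reg}}(T|_{V})_{ac}^{\dim V}>0$ because $T$ is a smooth K\"ahler metric near that point, and Conjecture \ref{con3} then forces the point out of $E_{nK}(\alpha)$; your contradiction argument with regularized test currents is the same idea with the implicit regularization step spelled out. Two small adjustments are worth making. First, you do not need, and should not lean on, $C^\infty_{loc}$ convergence of the Demailly approximants on the smooth locus (this is stronger than the standard quoted statements): it suffices that the absolutely continuous parts satisfy $(T_k)_{ac}(y)\to T_{ac}(y)=T(y)$ at every point where $T$ is smooth, and then Fatou's lemma on $V_{reg}\cap W$ gives $\liminf_k\int_{V_{reg}\cap W}\bigl((T_k+\ve\omega)|_{V_{reg}}\bigr)_{ac}^{\dim V}\geq\int_{V_{reg}\cap W}(T|_{V})^{\dim V}\geq c_0^{\dim V}\int_{V_{reg}\cap W}\omega^{\dim V}>0$, which is exactly the device used in the paper in Lemma \ref{invar} and in the proof of Theorem \ref{movable}; this yields the same uniform lower bound without any pointwise Hermitian-form inequality for $T_k$. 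Second, to conclude $\langle\alpha^{\dim V}\rangle_{X|V}\geq c$ you are implicitly using the supremum formula \eqref{voldef}, which is the definition only when $V\not\subset E_{nn}(\alpha)$ (otherwise the restricted volume is declared to be zero and no contradiction would arise); this must be checked, but it is immediate here: any current with minimal singularities in $[\alpha+\ve\omega]$ is less singular than $T+\ve\omega$, hence has vanishing Lelong numbers on $U$, so $E_{nn}(\alpha)\cap U=\emptyset$ and $V\not\subset E_{nn}(\alpha)$ since $V$ meets $U$ (equivalently, as the paper notes after \eqref{voldef}, the very existence of your admissible currents for all small $\ve$ already gives this). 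With these two points your argument is complete and coincides in substance with the paper's proof.
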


Here and in the rest of the paper, we use interchangably ``K\"ahler metric'' and ``K\"ahler form''. Of course, if $[\alpha]$ is also nef then we do not need to assume Conjecture \ref{conjec}, thanks to our earlier work \cite{CT}. Such ampleness criteria (in the projective case) were used to obtain quasi-projectivity criteria in \cite[Theorem 6.1]{LWX} and \cite[Theorem 6]{ST}.\\

This paper is organized as follows. After defining numerical restricted volumes and proving some of their basic properties in Section \ref{sect2}, where we also prove Theorem \ref{regg}, we state our main conjecture in Section \ref{sect3}, and we give the proofs of Proposition \ref{prob1} and Theorem \ref{localamp}. In Section \ref{sect4} we prove several technical results, and in Section \ref{sect5} we prove a Fujita type approximation result. In Section \ref{sect6} we prove Theorem \ref{l5}, and in Section \ref{sect7} we prove Theorems \ref{movable2} and \ref{contt}, while Proposition \ref{ortorv} is proved in Section \ref{sect8}.\\

{\bf Acknowledgments. }It is a pleasure to dedicate this paper to Professor Ahmed Zeriahi, whose contributions to the study of degenerate complex Monge-Amp\`ere equations has had a tremendous impact on our work. We are also grateful to S. Boucksom, J.-P. Demailly, R. Lazarsfeld, M. P\u{a}un, M. Popa and Y.-T. Siu for many enlightening discussions about these topics, to L. Ein, B. Lehmann, J. Lesieutre, M. Musta\c{t}\u{a} and D. Witt-Nystr\"om for useful communications, and to the referee for detailed comments that improved the exposition. Part of this work was carried out during the second-named author's visits at the Center for Mathematical Sciences and Applications at Harvard University and at the Yau Mathematical Sciences Center at Tsinghua University in Beijing, which he would like to thank for the hospitality.

\section{Numerical restricted volumes of $(1,1)$ classes}\label{sect2}
Let $(X,\omega)$ be a compact K\"ahler manifold, $[\alpha]$ a pseudoeffective real $(1,1)$ class on $X$. First, let us recall some standard concepts, referring for example to \cite{Bo2} for details. Every pseudoeffective class contains closed positive $(1,1)$-currents, which are of the form $T=\alpha+\ddbar\vp$, where $\alpha$ is a closed real $(1,1)$-form representing the class and $\vp$ is a quasi-psh function such that $T\geq 0$ in the weak sense. Furthermore, one can always find such a current $T_{min}=\alpha+\ddbar\vp_{min}$ which has minimal singularities, in the sense that for every other closed positive current $S=\alpha+\ddbar\psi\geq 0$ in the class $[\alpha]$ there is some constant $C$ such that $\psi\leq \vp_{min}+C$ holds on $X$. In general $T_{min}$ is not unique, but by definition the difference of the potentials of two such currents is bounded.

We will also say that a closed real $(1,1)$-current $T=\alpha+\ddbar\vp$ is almost positive if it satisfies $T\geq -C\omega$ weakly for some $C\geq 0$.
Such a current is said to have analytic singularities if there are a coherent ideal sheaf $\mathcal{I}\subset\mathcal{O}_X$ and $c\in\mathbb{R}_{>0}$ such that $\vp$ is locally equal to $$c\log\sum_{j=1}^N|f_j|^2+h,$$
where the $f_j$ are local generators of $\mathcal{I}$ and $h$ is smooth. Such a current $T$ is then smooth outside the closed analytic subvariety $V(\mathcal{I})$.

A K\"ahler current $T$ is a closed positive $(1,1)$-current which satisfies $T\geq\ve\omega$ for some $\ve>0$, and a class $[\alpha]$ is called big if it contains a K\"ahler current. By Demailly's regularization \cite{Dem92}, big classes contain K\"ahler currents with analytic singularities. The intersection of the singular sets of all K\"ahler currents in a big class $[\alpha]$ is the non-K\"ahler locus $E_{\rm nK}(\alpha)\subset X$. This subset is a closed proper analytic subvariety, since it is shown in \cite{Bo} that $[\alpha]$ contains a K\"ahler current with analytic singularities precisely along $E_{\rm nK}(\alpha)$. Furthermore, $E_{\rm nK}(\alpha)$ is empty iff $[\alpha]$ is K\"ahler. We also define $E_{\rm nK}(\alpha)=X$ when $[\alpha]$ is pseudoeffective but not big.

For a pseudoeffective class $[\alpha]$, the non-nef locus $E_{\rm nn}(\alpha)\subset X$ is then defined by
\begin{equation}\label{moron}
E_{\rm nn}(\alpha)=\bigcup_{\ve>0}E_{\rm nK}(\alpha+\ve\omega),
\end{equation}
which is easily seen to be independent of the choice of $\omega$. The non-nef locus is in general a countable union of closed analytic subvarieties \cite{Les}, and it is empty iff $[\alpha]$ is nef.

We are now given $V\subset X$ an irreducible $k$-dimensional analytic subvariety, $k>0$. Following the terminology of \cite{Le} in the algebraic setting, we say that a pseudoeffective $(1,1)$-class $[\alpha]$ is $V$-pseudoeffective ($V$-psef) if $V\not\subset E_{\rm nn}(\alpha)$, and that $[\alpha]$ is $V$-big if $V\not\subset E_{\rm nK}(\alpha)$ (hence $[\alpha]$ is big). It is clear that $[\alpha]$ is $V$-psef iff $[\alpha+\ve\omega]$ is $V$-big for all $\ve>0$.
Furthermore, a pseudoeffective $(1,1)$-class $[\alpha]$ is $V$-big iff it contains a K\"ahler current with analytic singularities which is smooth at the generic point of $V$ (thanks to the above-mentioned result of \cite{Bo}). This observation is the analog of \cite[Lemma 2.8]{Le}.  It then follows that $[\alpha]$ is $V$-psef iff for every $\ve>0$ it contains a closed real $(1,1)$ current $T_\ve=\alpha+\ddbar\vp_\ve\geq -\ve\omega$ with analytic singularities which is smooth at the generic point of $V$.

We then wish to define the numerical restricted volume $\langle \alpha^{k}\rangle_{X|V}$ of $[\alpha]$ on $V$. As we said in the introduction, if $V\subset E_{\rm nn}(\alpha)$ we simply define $\langle \alpha^{k}\rangle_{X|V}=0$. If on the other hand $V\not\subset E_{\rm nn}(\alpha)$ (i.e. $[\alpha]$ is $V$-psef)
then we define
\begin{equation}\label{voldef}
\langle \alpha^{k}\rangle_{X|V}=\lim_{\ve\downarrow 0}\sup_{T}\int_{V_{\rm reg}} ((T+\ve\omega)|_{V_{\rm reg}})_{\rm ac}^{k},
\end{equation}
where the supremum is over all closed real $(1,1)$ currents $T$ in the class $[\alpha]$ which satisfy $T\geq -\ve\omega$ on $X$, such that $T$ has analytic singularities which do not contain $V$ (and as we just said such currents exist precisely when $[\alpha]$ is $V$-psef).
Note that the limit as $\ve\to 0$ exists because the map
$$\ve\mapsto \sup_{T}\int_{V_{\rm reg}} ((T+\ve\omega)|_{V_{\rm reg}})_{\rm ac}^{k},$$
is monotone increasing in $\ve$, and it is also not hard to see that it is independent of the choice of $\omega$.
 The number $\langle \alpha^{k}\rangle_{X|V}$ is finite thanks to the following lemma, which is a combination of \cite[Proposition 2.6]{Bo} and \cite[proposition 4.1]{Ma}.
\begin{lem}\label{finite}
Let $(X,\omega)$ be a compact K\"ahler manifold, $V\subset X$ an irreducible analytic $k$-dimensional subvariety, $T,S$ two closed $(1,1)$ currents on $X$ with $T\geq -A\omega, S\geq-A\omega$ which restrict to $V$. Then for any $p,q\geq 0, p+q\leq k,$ there is a constant $C$, which depends only on $X, V,\omega, p,q, A$ and on the cohomology classes of $T$ and $S$ such that
$$\left|\int_{V_{\rm reg}} (T|_{V_{\rm reg}})_{\rm ac}^p\wedge (S|_{V_{\rm reg}})_{\rm ac}^q \wedge \omega^{k-p-q}\right|\leq C.$$
\end{lem}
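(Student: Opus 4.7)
The plan is a bimeromorphic reduction to a smooth K\"ahler model, followed by a cohomological mass bound for non-pluripolar products, in the spirit of \cite[Proposition 2.6]{Bo} and \cite[Proposition 4.1]{Ma}.

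First I would use $X\in\mathcal{C}$ to pick a modification $\mu\colon \tilde X\to X$ with $\tilde X$ smooth compact K\"ahler and, by further blowing up, with the strict transform $\tilde V$ of $V$ smooth and $\mu|_{\tilde V}\colon \tilde V\to V$ a proper modification that is biholomorphic over a dense Zariski open subset of $V_{reg}$. Choose a K\"ahler form $\tilde\omega$ on $\tilde X$ with $\mu^*\omega\leq\tilde\omega$. Since the absolutely continuous part is an intrinsic Radon-Nikodym density, invariant under birational pullback off a Lebesgue-null set, a change of variables gives equality between the integral to be bounded and
\[
\int_{\tilde V}((\mu^*T)|_{\tilde V})_{ac}^p\wedge ((\mu^*S)|_{\tilde V})_{ac}^q\wedge(\mu^*\omega)^{k-p-q}.
\]
Using $(\mu^*\omega)^{k-p-q}\leq\tilde\omega^{k-p-q}$ as positive forms (obtained by iterated wedging with positive forms), this last is in turn majorized by the same integral with $\tilde\omega^{k-p-q}$ in place of $(\mu^*\omega)^{k-p-q}$. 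The problem is thus reduced to a smooth subvariety in a smooth K\"ahler manifold.

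Next, setting $T':=\mu^*T+C\tilde\omega\geq 0$ and $S':=\mu^*S+C\tilde\omega\geq 0$ for some $C$ depending only on $A,\omega,\mu$, and using that $C\tilde\omega$ is smooth so $(\mu^*T)_{ac}=(T')_{ac}-C\tilde\omega$ and similarly for $S$, a multilinear expansion reduces the estimate to bounding finitely many integrals of the form
\[
\int_{\tilde V}(T'|_{\tilde V})_{ac}^{p'}\wedge (S'|_{\tilde V})_{ac}^{q'}\wedge\tilde\omega^{k-p'-q'},\qquad p'\leq p,\ q'\leq q,
\]
for closed positive currents $T',S'$. For each of these I would apply the pointwise domination of the wedge of ac parts by the non-pluripolar product $\langle (T'|_{\tilde V})^{p'}\wedge (S'|_{\tilde V})^{q'}\rangle$ (Bedford-Taylor and Demailly), followed by the BEGZ cohomological mass bound
\[
\int_{\tilde V}\langle (T'|_{\tilde V})^{p'}\wedge (S'|_{\tilde V})^{q'}\rangle\wedge\tilde\omega^{k-p'-q'}\leq \int_{\tilde V}[T'|_{\tilde V}]^{p'}\wedge[S'|_{\tilde V}]^{q'}\wedge[\tilde\omega|_{\tilde V}]^{k-p'-q'},
\]
whose right-hand side is a fixed cohomological number on $\tilde V$ depending only on the classes of $T$, $S$, $\omega$, an upper bound for $A$, and the fixed model $(\tilde X,\tilde V,\tilde\omega)$, which in turn depends only on $X,V,\omega$.

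The main technical obstacle I anticipate is the careful treatment of the absolutely continuous part on the singular subvariety $V$ and its compatibility under pullback via $\mu|_{\tilde V}$: this must be verified locally on a Zariski open subset of $V_{reg}$ where both $T$ and $S$ have restrictions in the Bedford-Taylor sense (possibly after a local Demailly regularization), and then extended by the fact that the analytic remainders contribute nothing to the integral. The hypothesis that $X$ lies in class $\mathcal{C}$ is used essentially here, since only after passing to the K\"ahler pair $(\tilde X,\tilde\omega)$ does the BEGZ monotonicity inequality become available.
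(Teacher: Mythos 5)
Your reduction to a smooth K\"ahler model $(\ti{X},\ti{\omega})$ with $\ti{V}$ smooth, the identification of the ac parts under $\mu|_{\ti{V}}$ off a Lebesgue-null set, and the shift by $C\ti{\omega}$ followed by a multilinear expansion into terms involving positive currents all match the paper's argument. The divergence, and the gap, is in how you bound each term $\int_{\ti{V}}(T'|_{\ti{V}})_{ac}^{p'}\wedge(S'|_{\ti{V}})_{ac}^{q'}\wedge\ti{\omega}^{k-p'-q'}$. The ``BEGZ cohomological mass bound'' you invoke is false as stated: the total mass of a non-pluripolar product is not bounded by the cup product of the cohomology classes unless the classes are nef. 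Already on a surface, if $[T']$ is big but not nef, with Zariski decomposition $P+N$, $N\neq 0$, then a current with minimal singularities in $[T']$ has non-pluripolar mass $\vol(T')=P^2$, strictly larger than $[T']^2=P^2+N^2$; and in your situation the classes $[\mu^*T]+C[\ti{\omega}]$ restricted to $\ti{V}$ are big but need not be nef. The correct upper bound is the positive (movable) intersection product of the classes, but to obtain it for the currents at hand --- arbitrary positive currents with no analytic singularities and no small unbounded locus --- you would need monotonicity in a generality beyond \cite[Theorem 1.16]{BEGZ}, which is proved there for currents with small unbounded locus (the unrestricted version is a later theorem of Witt Nystr\"om, not available off the shelf here). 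In addition, your step (a), the pointwise domination of $(T')_{ac}^{p'}\wedge(S')_{ac}^{q'}$ by the non-pluripolar product, is standard only for currents with locally bounded potentials; for general positive currents it is an unproved claim that would itself require a plurifine-localization argument, and it is not covered by the Bedford--Taylor and Demailly results you gesture at.

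The paper avoids both problems by a more elementary device: on the compact K\"ahler manifold $\ti{V}$ it applies Demailly regularization \cite{Dem92} to $\mu^*T|_{\ti{V}}$ and $\mu^*S|_{\ti{V}}$, producing smooth forms $T_j,S_j$ in the fixed cohomology classes with a uniform lower bound $-C_0\ti{\omega}$ (the loss is controlled by the Lelong numbers, which are bounded in terms of the class by \cite[Lemma 2.5]{Bo}) and converging a.e.\ to the respective ac parts. The shifted smooth products $(T_j+C_0\ti{\omega})^p\wedge(S_j+C_0\ti{\omega})^q\wedge\ti{\omega}^{k-p-q}$ are nonnegative with total mass equal to a genuine cup-product number independent of $j$, Fatou's lemma transfers the bound to the ac parts, and a binomial expansion plus induction on $r+s$ isolates the term you want. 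If you wish to keep your structure, replace your key estimate by this regularization--Fatou argument, or else prove the non-pluripolar domination and use the positive intersection product, not the cup product, as the class-dependent upper bound.
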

\begin{proof} Let $\mu:\ti{X}\to X$ be an embedded resolution of singularities of $V$, so $\ti{X}$ is compact K\"ahler manifold, $\mu$ is a composition of smooth blowups, the proper transform $\ti{V}$ of $V$ is smooth and $\mu|_{\ti{V}}:\ti{V}\to V$ is a modification. Given a K\"ahler metric $\ti{\omega}$ on $\ti{X}$, there is a constant $B>0$ such that $\mu^*\omega\leq B\ti{\omega}$. Then $\mu^*T$ is a well-defined closed $(1,1)$ current on $\ti{X}$ with $\mu^*T\geq -AB\ti{\omega}$.
Now $\mu$ is an isomorphism from a Zariski open subset of $\ti{V}$ to a Zariski open subset of $V$, and it identifies $(T|_{V_{\rm reg}})_{\rm ac}$ with $(\mu^*T|_{\ti{V}})_{\rm ac}$ on these open sets. But these are two $(1,1)$ forms with coefficients which are $L^1_{\rm loc}$ functions, and a Zariski closed set has Lebesgue measure zero. The same discussion applies to $(S|_{V_{\rm reg}})_{\rm ac}$ and $(\mu^*S|_{\ti{V}})_{\rm ac}$, and so
$$\int_{V_{\rm reg}} (T|_{V_{\rm reg}})_{\rm ac}^p\wedge (S|_{V_{\rm reg}})_{\rm ac}^q \wedge \omega^{k-p-q}=\int_{\ti{V}} (\mu^*T|_{\ti{V}})_{\rm ac}^p\wedge (\mu^*S|_{\ti{V}})_{\rm ac}^q \wedge \mu^*\omega^{k-p-q}.$$
To bound this integral, we apply Demailly's regularization procedure on $\ti{V}$ and obtain a sequence $T_j$ of smooth closed $(1,1)$ forms on $\ti{V}$, cohomologous to $\mu^*T|_{\ti{V}}$, such that
$$T_j\geq -AB\ti{\omega}-C\lambda_k\ti{\omega},$$
for a constant $C$ depending on $(\ti{V},\ti{\omega})$ and continuous functions $\lambda_k(x)$ which decrease to $\nu(\mu^*T|_{\ti{V}},x)$ for any $x\in \ti{V}$, and such that $T_j(x)\to (\mu^*T|_{\ti{V}})_{\rm ac}(x)$ a.e. as $j\to\infty$. By \cite[Lemma 2.5]{Bo} the Lelong numbers $\nu(\mu^*T|_{\ti{V}},x)$ are all bounded above by $C$, which depends only on $(\ti{V},\ti{\omega})$ and on the cohomology class of $\mu^*T|_{\ti{V}}$. Hence
$$T_j\geq -C_0\ti{\omega},$$
holds for all $j$.

Similarly, we construct smooth forms $S_j$ with
$$S_j\geq -C_0\ti{\omega}, \quad S_j(x)\to (\mu^*S|_{\ti{V}})_{\rm ac}(x) \textrm{ for a.e. }x\in \ti{V}.$$
But now note that the integrals
$$\int_{\ti{V}}(T_j+C_0\ti{\omega})^p\wedge (S_j+C_0\ti{\omega})^q \wedge \mu^*\omega^{k-p-q}$$
are nonnegative and bounded above by
$$B^{k-p-q}\int_{\ti{V}}(T_j+C_0\ti{\omega})^p\wedge (S_j+C_0\ti{\omega})^q \wedge \ti{\omega}^{k-p-q},$$
which is independent of $j$ since it is a cohomological number. Hence, by Fatou's lemma,
$$\int_{\ti{V}} (\mu^*T|_{\ti{V}}+C_0\ti{\omega})_{\rm ac}^p\wedge (\mu^*S|_{\ti{V}}+C_0\ti{\omega})_{\rm ac}^q \wedge \mu^*\omega^{k-p-q}\leq C,$$
and also
$$\int_{\ti{V}} (\mu^*T|_{\ti{V}}+C_0\ti{\omega})_{\rm ac}^p\wedge (\mu^*S|_{\ti{V}}+C_0\ti{\omega})_{\rm ac}^q \wedge \mu^*\omega^{k-p-q}\geq 0,$$
since the integrand is a nonnegative measure.
Now expand this as
$$\sum_{r,s}\binom{p}{r}\binom{q}{s}\int_{\ti{V}} (\mu^*T|_{\ti{V}})_{\rm ac}^r\wedge (C_0\ti{\omega})^{p-r}\wedge (\mu^*S|_{\ti{V}})_{\rm ac}^s\wedge(C_0\ti{\omega})^{q-s} \wedge \mu^*\omega^{k-p-q}.$$
We need to bound the term with $p=r,q=s$, and this follows from the estimate we just proved (which holds for all $p,q\geq 0, p+q\leq k$) together with a simple induction argument on $r+s$, the case of $r=s=0$ being trivial.
\end{proof}

\begin{rem}
We have chosen to use the absolutely continuous Monge-Amp\`ere operator to define the numerical restricted volume, but we could have equivalently used the non-pluripolar Monge-Amp\`ere operator $\langle \cdot\rangle$ as defined in \cite{BEGZ}, since these agree on currents with analytic singularities (indeed, both are equal to the extension by zero of the Monge-Amp\`ere operator on the complement of the singular set of the current). Also, as we will see in Remark \ref{nonpl}, if $V\not\subset E_{\rm nn}(\alpha)$ then we also have
\begin{equation}\label{nonp}
\langle \alpha^{k}\rangle_{X|V}=\lim_{\ve\downarrow 0}\int_{V_{\rm reg}} \langle (T_{min,\ve}|_{V_{\rm reg}})^k\rangle,
\end{equation}
where $T_{min,\ve}$ is a positive current with minimal singularities in the class $[\alpha+\ve\omega]$, for $\ve>0$, and where $\langle\cdot\rangle$ on the RHS is the non-pluripolar Monge-Amp\`ere operator.
\end{rem}

\begin{rem}
One can formally also define the numerical restricted volume when $\dim V=0$, i.e. $V$ is a point, by setting $\langle\alpha^0\rangle_{X|V}$ equal to $0$ if $V\subset E_{\rm nn}(\alpha)$ and equal to $1$ otherwise, and all theorems that we will prove are still valid in this case. However, we will not insist on this, and from now on $V$ will always denote a positive-dimensional irreducible subvariety.
\end{rem}

\begin{rem}
As in \cite{BoT}, one can similarly also define a restricted numerical intersection product
$$\langle \alpha_1\cdots\alpha_k\rangle_{X|V},$$
which extends the algebraic construction in \cite{BFJ}, where the $[\alpha_j]$'s are pseudoeffective classes with $V\not\subset \cup_j E_{\rm nn}(\alpha_j).$
\end{rem}

As mentioned in the introduction, the numerical restricted volume coincides with the ``mobile intersection number''
$$\langle\alpha^k\rangle_{X|V}=(\alpha^k\cdot [V])_{\geq 0},$$
defined in Boucksom's thesis \cite[Definition 3.2.1]{BoT}, where he proves some basic properties. For the reader's convenience, we incorporate some of these here.
The first property \cite[Proposition 3.2.4]{BoT} is a semicontinuity result:
\begin{lem}\label{semi}
Let $(X,\omega)$ be a compact K\"ahler manifolds and $[\alpha_j]$ be a sequence of pseudoeffective classes which converge to a class $[\alpha]$. If $V$ is an irreducible $k$-dimensional subvariety of $X$, with $V\not\subset E_{\rm nn}(\alpha_j)$ for all $j$, then we have
\begin{equation}\label{semic}
\langle\alpha^k\rangle_{X|V}\geq \limsup_{j\to\infty}\langle\alpha_j^k\rangle_{X|V}.
\end{equation}
\end{lem}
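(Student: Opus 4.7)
The plan is to work with the auxiliary quantity
$$F(\beta,\ve):=\sup_T\int_{V_{reg}}((T+\ve\omega)|_{V_{reg}})_{ac}^k,$$
where $T$ ranges over the admissible competitors in the class $[\beta]$ as in \eqref{voldef}. Since $F(\beta,\ve)$ is monotone nondecreasing in $\ve$ (as already observed after that definition), we have $\langle\beta^k\rangle_{X|V}=\inf_{\ve>0}F(\beta,\ve)$ whenever $V\not\subset E_{nn}(\beta)$. Consequently it is enough to prove that for every fixed $\ve>0$,
$$F(\alpha,\ve)\ge\limsup_{j\to\infty}\langle\alpha_j^k\rangle_{X|V},$$
and then let $\ve\downarrow 0$.

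Fix such an $\ve>0$. For each $j$, I would choose a competitor $T_j\in[\alpha_j]$ with $T_j\ge-(\ve/2)\omega$ and with analytic singularities avoiding $V$, near-optimal in the sense that
$$\int_{V_{reg}}((T_j+\tfrac{\ve}{2}\omega)|_{V_{reg}})_{ac}^k\ge\langle\alpha_j^k\rangle_{X|V}-\tfrac{1}{j},$$
which is possible because $F(\alpha_j,\ve/2)\ge\langle\alpha_j^k\rangle_{X|V}$. Since $H^{1,1}(X,\mathbb{R})$ is finite-dimensional, I can also select smooth representatives $\theta_j\in[\alpha_j-\alpha]$ (for instance the $\omega$-harmonic ones) with $\|\theta_j\|_{C^0(\omega)}\to 0$, and set $S_j:=T_j-\theta_j\in[\alpha]$. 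Because $\theta_j$ is smooth, $S_j$ has the same analytic singularities as $T_j$, so $V\not\subset E_+(S_j)$; and for $j$ sufficiently large,
$$S_j\ge-\bigl(\tfrac{\ve}{2}+\|\theta_j\|_{C^0(\omega)}\bigr)\omega\ge-\ve\omega,$$
so $S_j$ is admissible for $F(\alpha,\ve)$. This already shows $V\not\subset E_{nn}(\alpha)$, so the left-hand side of \eqref{semic} is given by the limit definition and not by the convention.

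The comparison step is the heart of the argument. Writing
$$S_j+\ve\omega=\bigl(T_j+\tfrac{\ve}{2}\omega\bigr)+\bigl(\tfrac{\ve}{2}\omega-\theta_j\bigr),$$
the second summand is a smooth nonnegative $(1,1)$ form as soon as $\|\theta_j\|_{C^0(\omega)}<\ve/2$. The absolutely continuous parts on $V_{reg}$ therefore split additively, and the binomial expansion yields the pointwise inequality
$$((S_j+\ve\omega)|_{V_{reg}})_{ac}^k\ge((T_j+\tfrac{\ve}{2}\omega)|_{V_{reg}})_{ac}^k,$$
because every cross term is a wedge product of nonnegative $(1,1)$ forms. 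Integrating over $V_{reg}$ gives $F(\alpha,\ve)\ge\langle\alpha_j^k\rangle_{X|V}-1/j$, and taking $\limsup_j$ followed by $\ve\downarrow 0$ completes the proof. The main thing to get right is the bookkeeping that balances the positivity parameter $\ve$ against $\|\theta_j\|_{C^0(\omega)}$; no appeal to Lemma \ref{finite} or to any refined regularization of the $T_j$ is required, since the key comparison reduces to the trivial positivity fact $(a+b)^k\ge a^k$ for nonnegative $(1,1)$ forms.
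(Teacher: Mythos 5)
Your proof is correct, and while its skeleton is the same as the paper's (transfer near-optimal competitors for $[\alpha_j]$ into the class $[\alpha]$ by adding smooth representatives of the difference class that tend to zero), the key steps are handled differently. The paper lets the positivity parameter $\ve_j\to 0$ along with $j$ and then must estimate the two-sided difference $\left|\int_{V_{reg}}(T_j+\ve_j\omega)_{ac}^k-\int_{V_{reg}}(T_j+\theta_j+(\ve_j+\delta_j)\omega)_{ac}^k\right|$, which requires the uniform bound on mixed Monge--Amp\`ere masses from Lemma \ref{finite}; you instead fix $\ve$, give the competitor only the budget $\ve/2$ and use the remaining $\tfrac{\ve}{2}\omega$ to absorb $\theta_j$, so the comparison reduces to the one-sided positivity $(a+b)^k\geq a^k$ for nonnegative forms, with the limit $\ve\downarrow 0$ taken only at the end via monotonicity of $\ve\mapsto F(\alpha,\ve)$; this avoids Lemma \ref{finite} entirely. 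Similarly, for the preliminary fact that $V\not\subset E_{nn}(\alpha)$ (needed so that the left side of \eqref{semic} is given by \eqref{voldef} and not by the convention), the paper invokes the lower semicontinuity of minimal multiplicities \cite[Proposition 3.5]{Bo2} together with the countability of the components of the $E_{nn}(\alpha_j)$, whereas you deduce it from the existence of the admissible currents $S_j\in[\alpha]$ for every small $\ve$, i.e.\ from the converse remark made right after \eqref{voldef}; that remark is asserted in the paper, though its verification itself rests on a countable-union-of-analytic-sets argument, so the two routes are of comparable depth on this point. Net effect: your argument is a bit more elementary and self-contained, at the cost of some bookkeeping balancing $\ve$ against $\|\theta_j\|_{C^0(\omega)}$, while the paper's version reuses machinery (Lemma \ref{finite}) that it needs elsewhere anyway.
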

\begin{proof}
First note that thanks to \cite[Proposition 3.5]{Bo2}, the minimal multiplicities $\nu([\alpha_j],x)$ are lower semicontinuous
$$\nu([\alpha],x)\leq\liminf_{j\to\infty}\nu([\alpha_j],x),$$
for all $x\in X$. Since $V\not\subset E_{\rm nn}(\alpha_j)$ for all $j$, and each such set is an at worst countable union of Zariski closed sets (but not finite in general \cite{Les}), we conclude that there is a point
$x\in V$ with $\nu([\alpha_j],x)=0$ for all $j$. Therefore $\nu([\alpha],x)=0$ too, which shows that $V\not\subset E_{\rm nn}(\alpha)$. In particular, the restricted volume $\langle\alpha^k\rangle_{X|V}$ is defined by \eqref{voldef}.

For each $j$ we may choose $T_j\in [\alpha_j]$ with $T_j\geq -\ve_j\omega$, $\ve_j\to 0$, with analytic singularities not containing $V$, such that
$$\left|\int_{V_{\rm reg}} (T_j+\ve_j\omega)_{\rm ac}^k-\langle\alpha_j^k\rangle_{X|V}\right|\to 0,$$
as $j\to\infty$.  Next, choose a sequence of closed smooth $(1,1)$ forms $\theta_j$ in the class $[\alpha-\alpha_j]$ which converge smoothly to zero.
In particular, $\theta_j\geq -\delta_j\omega$ for some $\delta_j\to 0$. Then $T_j+\theta_j$ is a closed $(1,1)$ current in the class $[\alpha]$, with analytic singularities not containing $V$, and with
$T_j+\theta_j\geq -(\ve_j+\delta_j)\omega$, and so
$$\limsup_{j\to\infty} \int_{V_{\rm reg}} (T_j+\theta_j+(\ve_j+\delta_j)\omega)_{\rm ac}^k\leq\langle\alpha^k\rangle_{X|V},$$
from the definition. But the difference
$$\left|\int_{V_{\rm reg}} (T_j+\ve_j\omega)_{\rm ac}^k- \int_{V_{\rm reg}} (T_j+\theta_j+(\ve_j+\delta_j)\omega)_{\rm ac}^k\right|$$
is easily seen to go to zero, using Lemma \ref{finite}. This finishes the proof.
\end{proof}
Although we won't use this, continuity in fact holds in \eqref{semic} if $V\not\subset E_{\rm nK}(\alpha)$, by \cite[Proposition 3.2.4]{BoT}.

The second property is taken from \cite[Proposition 3.2.2]{BoT} (see also \cite[Proposition 4.5]{Ma} for a weaker statement).

\begin{lem}\label{l4}
If $[\alpha]$ is nef and $V$ is an irreducible $k$-dimensional subvariety of $X$ then we have
$$\langle\alpha^k\rangle_{X|V}=\int_V\alpha^k.$$
\end{lem}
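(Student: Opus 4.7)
The plan is to establish $\langle\alpha^k\rangle_{X|V}\geq\int_V\alpha^k$ and the matching upper bound separately. The lower bound follows directly from nefness: for every $\ve>0$ there is a smooth form $\alpha_\ve\in[\alpha]$ with $\alpha_\ve\geq-\ve\omega$, so the smooth current $T=\alpha_\ve$ is admissible in the defining supremum \eqref{voldef}, and the resulting integrand is simply $\int_V(\alpha_\ve+\ve\omega)^k=\int_V(\alpha+\ve\omega)^k$ after passing to a resolution of $V$ (using that the restricted form is smooth). Sending $\ve\to 0$ gives $\langle\alpha^k\rangle_{X|V}\geq\int_V\alpha^k$.

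For the upper bound, I would argue that every admissible test current $T$ satisfies
\[
\int_{V_{reg}}((T+\ve\omega)|_{V_{reg}})_{ac}^k\leq\int_V(\alpha+\ve\omega)^k.
\]
Fix such a $T$ and take an embedded log-resolution $\pi:\ti X\to X$, exactly as in the proof of Lemma~\ref{finite}, which simultaneously resolves the singularities of $T$ and makes the proper transform $\ti V$ a smooth K\"ahler submanifold. Then $\pi^*(T+\ve\omega)=\theta+[D]$, where $\theta$ is a smooth semi-positive $(1,1)$-form on $\ti X$ and $D$ is an effective $\mathbb{R}$-divisor not containing $\ti V$. The identification of absolutely continuous parts under modifications already exploited in Lemma~\ref{finite} gives
\[
\int_{V_{reg}}((T+\ve\omega)|_{V_{reg}})_{ac}^k=\int_{\ti V}(\theta|_{\ti V})^k.
\]

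The main obstacle is then to bound this integral on $\ti V$ by $\int_V(\alpha+\ve\omega)^k$, and it is here that the nefness hypothesis is essential. Since $[\alpha+\ve\omega]$ is K\"ahler, I write $T+\ve\omega=\omega_\ve+\ddb\phi$ for a smooth K\"ahler form $\omega_\ve\in[\alpha+\ve\omega]$ and an $\omega_\ve$-psh function $\phi$ with analytic singularities, so that on $\ti V$
\[
\pi^*(T+\ve\omega)|_{\ti V}=\pi^*\omega_\ve|_{\ti V}+\ddb(\phi\circ\pi|_{\ti V})
\]
is a closed positive current in the nef class $\pi^*[\alpha+\ve\omega]|_{\ti V}$ on the compact K\"ahler manifold $\ti V$ (the potential is not identically $-\infty$ because $V\not\subset E_+(T)$). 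The standard monotonicity of non-pluripolar Monge-Amp\`ere masses in a nef class \cite{BEGZ} then bounds its non-pluripolar mass above by the cohomological value $[\pi^*\omega_\ve|_{\ti V}]^k=\int_V(\alpha+\ve\omega)^k$. Because $\pi^*(T+\ve\omega)|_{\ti V}$ has analytic singularities along $D\cap\ti V$, this non-pluripolar mass coincides with $\int_{\ti V}(\theta|_{\ti V})^k$, the two being equal off the measure-zero set $D\cap\ti V$ and the smooth measure $\theta|_{\ti V}^k$ placing no mass there. Taking the supremum over $T$ and letting $\ve\to 0$ then combines with the lower bound to give the claimed equality.
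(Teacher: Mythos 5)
Your argument is correct and follows essentially the same route as the paper: the lower bound uses exactly the smooth nef representatives, and the upper bound rests on the BEGZ monotonicity of non-pluripolar masses together with the fact that the non-pluripolar product agrees with the absolutely continuous Monge--Amp\`ere for currents with analytic singularities. The only cosmetic difference is that you pass to a log resolution and invoke the bound of non-pluripolar mass by $\int_{\ti{V}}\pi^*(\alpha+\ve\omega)^k$ in the nef class, whereas the paper compares $(T_\ve+\ve\omega)|_V$ directly with the less singular smooth form $(\rho_\ve+\ve\omega)|_V$ via \cite[Theorem 1.16]{BEGZ} after an embedded resolution of $V$.
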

\begin{proof}
Since $[\alpha]$ is nef, for any $\ve>0$ there is a closed smooth form $\rho_\ve$ in $[\alpha]$ with $\rho_\ve\geq-\ve\omega$. In particular,
$$\sup_{T\in[\alpha],T\geq -\ve\omega, V\not\subset E_+(T)} \int_{V_{\rm reg}}\left(T|_{V_{\rm reg}}+\ve\omega\right)_{\rm ac}^k\geq \int_{V_{\rm reg}}(\rho_\ve+\ve\omega)^k=\int_V(\alpha+\ve\omega)^k,$$
and letting $\ve\to 0$ we get $\langle\alpha^k\rangle_{X|V}\geq\int_V\alpha^k$.
We want to show the reverse inequality. Fix $\ve>0$ and pick $T_\ve\in[\alpha]$ with analytic singularities, with $T_\ve\geq -\ve\omega$ and $V\not\subset E_+(T_\ve)$, such that
$$\int_{V_{\rm reg}}\left(T_\ve|_{V_{\rm reg}}+\ve\omega\right)_{\rm ac}^k\geq \sup_{T\in[\alpha],T\geq -\ve\omega, V\not\subset E_+(T)} \int_{V_{\rm reg}}\left(T|_{V_{\rm reg}}+\ve\omega\right)_{\rm ac}^k-\ve.$$
As in the proof of Lemma \ref{finite}, up to considering an embedded resolution of singularities of $V$, we may assume that $V$ is smooth.
Then $(\rho_\ve+\ve\omega)|_V$ is less singular than $(T_\ve+\ve\omega)|_V$, and we can apply \cite[Theorem 1.16]{BEGZ} to see that
$$\int_V (\rho_\ve+\ve\omega)|_V^k\geq \int_V \langle (T_\ve+\ve\omega)|_V^k\rangle,$$
where $\langle\cdot\rangle$ is the non-pluripolar product. But if $T$ is a positive current with analytic singularities, we have $\langle T^k\rangle=T_{\rm ac}^k$, and so
\[\begin{split}
\int_V(\alpha+\ve\omega)^k&=\int_V (\rho_\ve+\ve\omega)^k\geq \int_V ((T_\ve+\ve\omega)|_V)^k_{\rm ac}\\
&\geq \sup_{T\in[\alpha],T\geq -\ve\omega, V\not\subset E_+(T)} \int_{V}\left(T|_{V}+\ve\omega\right)_{\rm ac}^k-\ve.
\end{split}\]
Letting $\ve\to 0$ we conclude that $\langle\alpha^k\rangle_{X|V}\leq\int_V\alpha^k$, as required.
\end{proof}

The third property, following \cite[Lemma 3.2.5]{BoT}, shows that when $V\not\subset E_{\rm nK}(\alpha)$ then the definition of numerical restricted volume simplifies:
\begin{lem}\label{volr}
For any pseudoeffective class $[\alpha]$ and any irreducible $k$-dimensional subvariety $V$ which satisfies $V\not\subset E_{\rm nK}(\alpha)$ (so in particular $[\alpha]$ is big), we have
\begin{equation}\label{toshow}
\langle\alpha^k\rangle_{X|V}=\sup_{T} \int_{V_{\rm reg}}\left(T|_{V_{\rm reg}}\right)_{\rm ac}^k,
\end{equation}
where the supremum is over all K\"ahler currents (or equivalently all closed positive currents) in the class $[\alpha]$ with analytic singularities which do not contain $V$.
\end{lem}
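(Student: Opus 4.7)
The plan is to prove the two directions of \eqref{toshow} separately, and then to upgrade the supremum from closed positive currents to K\"ahler currents by an interpolation argument.

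For the inequality $\sup_T \int_{V_{reg}}(T|_{V_{reg}})_{ac}^k \leq \langle\alpha^k\rangle_{X|V}$, I would observe that any K\"ahler current $T\in[\alpha]$ with analytic singularities not containing $V$ is admissible in the defining supremum \eqref{voldef} at every level $\ve>0$, since $T\geq 0\geq -\ve\omega$. Positivity of $T$ gives the pointwise bound $((T+\ve\omega)|_{V_{reg}})_{ac}^k \geq (T|_{V_{reg}})_{ac}^k$ on $V_{reg}$; integrating, using the definition of $\langle\alpha^k\rangle_{X|V}$, and letting $\ve\downarrow 0$ yields $\int_{V_{reg}}(T|_{V_{reg}})_{ac}^k \leq \langle\alpha^k\rangle_{X|V}$, and one then takes the supremum over $T$.

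For the reverse inequality, I would use $V\not\subset E_{nK}(\alpha)$ to fix once and for all a K\"ahler current $S\in[\alpha]$ with analytic singularities not containing $V$ and $S\geq\delta_0\omega$ for some $\delta_0>0$. Given any $\ve>0$ and any test current $T_\ve\in[\alpha]$ with $T_\ve\geq -\ve\omega$ and analytic singularities not containing $V$, set
$$T'_\ve := (1-\lambda)T_\ve+\lambda S, \qquad \lambda:=\tfrac{2\ve}{\delta_0}.$$
A direct check gives $T'_\ve\geq\ve\omega$, so $T'_\ve$ is a K\"ahler current in $[\alpha]$ whose polar set is contained in $E_+(T_\ve)\cup E_+(S)$ and hence does not contain $V$. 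Expanding both $((T_\ve+\ve\omega)|_{V_{reg}})_{ac}^k$ and $(T'_\ve|_{V_{reg}})_{ac}^k$ by multilinearity, and bounding every mixed cross-term by Lemma \ref{finite} (whose constants are uniform in $\ve$ and $T_\ve$), one obtains
$$\int_{V_{reg}}((T_\ve+\ve\omega)|_{V_{reg}})_{ac}^k \leq \int_{V_{reg}}(T'_\ve|_{V_{reg}})_{ac}^k + C\ve$$
for a constant $C$ depending only on the cohomology classes $[\alpha]$, $[\omega]$, $[S]$ and on $V$. Taking the supremum over $T_\ve$ and then $\ve\downarrow 0$ yields $\langle\alpha^k\rangle_{X|V}\leq \sup_T\int_{V_{reg}}(T|_{V_{reg}})_{ac}^k$. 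The K\"ahler-versus-positive-current equivalence then follows from the same interpolation at $\ve=0$: for any closed positive current $T$ in $[\alpha]$ with admissible analytic singularities, $T_\lambda=(1-\lambda)T+\lambda S$ is a K\"ahler current with admissible singularities, and the multilinear expansion together with Lemma \ref{finite} gives $\int_{V_{reg}}(T_\lambda|_{V_{reg}})_{ac}^k \to \int_{V_{reg}}(T|_{V_{reg}})_{ac}^k$ as $\lambda\downarrow 0$.

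The main obstacle is the bookkeeping around the analytic singularities of the convex combination $T'_\ve$: its local potential is $(1-\lambda)\vp_{T_\ve}+\lambda\vp_S$, whose polar set is the union of the individual polar sets (and hence still avoids $V$ generically), but which is not literally of the form $c\log\sum|f_j|^2+O(1)$. I would handle this either by reading ``analytic singularities'' in the flexible sense implicit in the definition \eqref{voldef}, or by inserting one step of Demailly regularization applied to $T'_\ve$ at a scale finer than the K\"ahler slack $\ve\omega$, which preserves K\"ahlerness, upgrades the singularities to strict analytic ones, and leaves Lelong numbers vanishing at the generic point of $V$, all compatibly with the Lemma \ref{finite} bounds used above.
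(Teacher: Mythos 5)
Your proposal is correct and follows essentially the same route as the paper: both hinge on fixing a K\"ahler current $S\in[\alpha]$ with analytic singularities not containing $V$ (available since $V\not\subset E_{nK}(\alpha)$), forming the convex combination of $S$ with a near-positive test current, and controlling all mixed terms uniformly via Lemma \ref{finite}; even your worry about the singularity type of the combination matches a step the paper takes without comment. The only difference is presentational -- you run a direct estimate with mixing parameter $\lambda\sim\ve$ and error $O(\ve)$, whereas the paper fixes a small $\gamma$ first and argues by contradiction after taking $\ve<\gamma\eta/2$ -- which does not change the substance of the argument.
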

\begin{proof}
Clearly
$$\langle\alpha^k\rangle_{X|V}\geq \sup_{T\in[\alpha],T\geq 0, V\not\subset E_+(T)} \int_{V_{\rm reg}}\left(T|_{V_{\rm reg}}\right)_{\rm ac}^k=:A,$$
where the supremum is over all closed positive currents in the class $[\alpha]$ with analytic singularities which do not contain $V$.
If we do not have equality, then there would exist $\delta>0$ such that
$$\sup_{T\in[\alpha],T\geq -\ve\omega, V\not\subset E_+(T)} \int_{V_{\rm reg}}\left(T|_{V_{\rm reg}}+\ve\omega\right)_{\rm ac}^k\geq A+2\delta,$$
for $\ve>0$ arbitrarily small. For any such $\ve$ choose $T_\ve\in [\alpha]$ with analytic singularities, with $T_\ve\geq -\ve\omega$, $V\not\subset E_+(T_\ve)$ and
$$ \int_{V_{\rm reg}}\left(T_{\ve}|_{V_{\rm reg}}+\ve\omega\right)_{\rm ac}^k\geq A+\delta.$$
Let $S\in[\alpha]$ be a K\"ahler current with analytic singularities with $S\geq\eta\omega,\eta>0$, and with $V\not\subset E_+(S)$, which exists thanks to the assumption that
$V\not\subset E_{\rm nK}(\alpha)$. Choose $\gamma>0$ small enough so that
$$(1-\gamma)^k(A+\delta)\geq A+\frac{\delta}{2}.$$
Applying Lemma \ref{finite} we see that
$$\left|\int_{V_{\rm reg}} (T_\ve|_{V_{\rm reg}})_{\rm ac}^i\wedge (S|_{V_{\rm reg}})_{\rm ac}^{k-i}\right|\leq C,$$
for some fixed constant independent of $\ve$ (small). We can therefore choose $\gamma$ sufficiently small so that we also have
$$\sum_{i=0}^{k-1}\gamma^{k-i}(1-\gamma)^i\binom{k}{i}\left|\int_{V_{\rm reg}} (T_\ve|_{V_{\rm reg}})_{\rm ac}^i\wedge (S|_{V_{\rm reg}})_{\rm ac}^{k-i}\right|<\frac{\delta}{8},$$
for all $0<\ve<1.$
Fixing any $\ve<\gamma\eta/2$, we have that
$$(1-\gamma)T_\ve+\gamma S\geq \left(-\ve(1-\gamma)+\gamma\eta\right)\omega\geq \left(-\ve+\gamma\eta\right)\omega\geq \frac{\gamma\eta}{2}\omega,$$
so that $(1-\gamma)T_\ve+\gamma S\in[\alpha]$ is a K\"ahler current with analytic singularities which do not contain $V$ and
\[\begin{split}
A&\geq\int_{V_{\rm reg}}\left(((1-\gamma)T_\ve+\gamma S)|_{V_{\rm reg}}\right)_{\rm ac}^k\\
&\geq (1-\gamma)^k\int_{V_{\rm reg}}\left(T_\ve|_{V_{\rm reg}}\right)_{\rm ac}^k-\frac{\delta}{8}\\
&\geq (1-\gamma)^k\int_{V_{\rm reg}}\left(T_\ve|_{V_{\rm reg}}+\ve\omega\right)_{\rm ac}^k-\frac{\delta}{4}\\
&\geq (1-\gamma)^k(A+\delta)-\frac{\delta}{4}\geq  A+\frac{\delta}{4},
\end{split}\]
a contradiction, where we chose $\ve$ small enough so that
$$(1-\gamma)^k\int_{V_{\rm reg}}\left(T_\ve|_{V_{\rm reg}}\right)_{\rm ac}^k\geq (1-\gamma)^k\int_{V_{\rm reg}}\left(T_\ve|_{V_{\rm reg}}+\ve\omega\right)_{\rm ac}^k-\frac{\delta}{8},$$
which is possible again thanks to Lemma \ref{finite}. Since $(1-\gamma)T_\ve+\gamma S\in[\alpha]$ is a K\"ahler current, this argument also proves \eqref{toshow} when we restrict to K\"ahler currents.
\end{proof}
\begin{rem}
When $[\alpha]$ is big and $V\not\subset E_{\rm nK}(\alpha)$, the numerical restricted volume was introduced independently by Hisamoto \cite{Hi} and Matsumura \cite{Ma}, by the right hand side of \eqref{toshow} (using closed positive currents). They also proved some of the basic properties of numerical restricted volumes in this case. For example, the arguments in \cite[Theorem 1.3]{Hi} can be used to show that when $[\alpha]$ is $V$-big we have
$$\langle\alpha^k\rangle_{X|V}=\int_{V_{\rm reg}}\langle (T_{min}|_{V_{\rm reg}})^k\rangle,$$
where $\langle\cdot\rangle$ on the RHS is the non-pluripolar Monge-Amp\`ere operator. Our main interest is however in the case when $V$ is possibly contained in $E_{\rm nK}(\alpha)$, so $[\alpha]$ is $V$-psef but not $V$-big.
\end{rem}
The following result was proved independently by Hisamoto \cite{Hi} and Matsumura \cite{Ma}, using the crucial \cite[Theorem 2.13]{ELMNP}:
\begin{thm}\label{hisamat}
Let $X$ be a projective manifold, $L$ a big line bundle and $V\subset X$ an irreducible subvariety. If $V\not\subset E_{\rm nK}(c_1(L))=\mathbb{B}_+(L)$, then we have
\begin{equation}\label{equal3}
\langle c_1(L)^{\dim V}\rangle_{X|V}=\vol_{X|V}(L).
\end{equation}
\end{thm}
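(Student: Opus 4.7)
The plan is to prove the two inequalities in \eqref{equal3} separately. The direction $\vol_{X|V}(L) \leq \langle c_1(L)^{\dim V}\rangle_{X|V}$ is exactly \eqref{oneway} from Theorem \ref{regg} and holds with no restriction on the position of $V$, so I focus on the reverse inequality. The hypothesis $V \not\subset E_{nK}(c_1(L)) = \mathbb{B}_+(L)$ allows me to apply Lemma \ref{volr}, which yields
$$\langle c_1(L)^{k}\rangle_{X|V} \ =\ \sup_{T} \int_{V_{reg}} (T|_{V_{reg}})_{ac}^{k},$$
with the supremum taken over K\"ahler currents $T \in c_1(L)$ having analytic singularities not containing $V$. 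It therefore suffices to show, for every such $T$, that $\int_{V_{reg}}(T|_{V_{reg}})_{ac}^{k} \leq \vol_{X|V}(L)$.

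Fix such a $T$, viewed as the curvature of a singular Hermitian metric $h = h_{0} e^{-2\varphi}$ on $L$, whose weight $\varphi$ is quasi-plurisubharmonic with analytic singularities along a subvariety $Z$ with $V \not\subset Z$. The plan, following Hisamoto and Matsumura, is a singular-weight Ohsawa-Takegoshi $L^{2}$-extension argument. I would first pass to a log resolution $\pi\colon \tilde X \to X$ of $Z \cup V$ so that the proper transform $\tilde V$ is smooth, $\pi|_{\tilde V}\colon \tilde V \to V$ is a modification, and $\pi^{*}\varphi$ has divisorial analytic singularities whose restriction to $\tilde V$ is supported in a proper Zariski closed subset. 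For $m$ sufficiently large, I would then apply an Ohsawa-Takegoshi-type $L^{2}$-extension theorem with the singular weight $m\pi^{*}\varphi$ to lift sections of $\pi^{*}(mL)|_{\tilde V}$ lying in the multiplier-ideal subsheaf $\mathcal{I}(m\pi^{*}\varphi|_{\tilde V})$ to global sections of $\pi^{*}(mL)$ on $\tilde X$; since $\pi$ is a modification, $H^{0}(\tilde X, \pi^{*}(mL)) = H^{0}(X, mL)$, and the restrictions of these extensions to $V$ sit inside the image of $H^{0}(X, mL) \to H^{0}(V, mL|_{V})$. The dimension of such extendable sections on $\tilde V$ is then bounded below by a Bergman-kernel / asymptotic Riemann-Roch expansion with singular weight, which on the smooth projective $k$-fold $\tilde V$ takes the form
$$h^{0}\!\bigl(\tilde V,\, \pi^{*}(mL)|_{\tilde V} \otimes \mathcal{I}(m\pi^{*}\varphi|_{\tilde V})\bigr) \ \geq\ \frac{m^{k}}{k!}\int_{\tilde V}(\pi^{*}T|_{\tilde V})_{ac}^{k} \ -\ o(m^{k}),$$
and as in the proof of Lemma \ref{finite} the integral on the right coincides with $\int_{V_{reg}}(T|_{V_{reg}})_{ac}^{k}$. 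Assembling these two ingredients gives
$$\dim\mathrm{Im}\bigl(H^{0}(X,mL) \to H^{0}(V,mL|_{V})\bigr) \ \geq\ \frac{m^{k}}{k!}\int_{V_{reg}}(T|_{V_{reg}})_{ac}^{k} - o(m^{k}),$$
and dividing by $m^{k}/k!$ and letting $m \to \infty$ yields $\vol_{X|V}(L) \geq \int_{V_{reg}}(T|_{V_{reg}})_{ac}^{k}$, as required.

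The main obstacle will be the $L^{2}$-extension step: one needs a version of Ohsawa-Takegoshi applicable from $\tilde V$ into $\tilde X$ with a singular plurisubharmonic weight whose polar set meets $\tilde V$ in a proper Zariski closed subset, together with an $L^{2}$ estimate strong enough that the extended sections genuinely belong to $H^{0}(\tilde X, \pi^{*}(mL))$. A secondary subtlety is the asymptotic bookkeeping between sections on $\tilde V$ twisted by the multiplier ideal $\mathcal{I}(m\pi^{*}\varphi|_{\tilde V})$ and honest restrictions of global sections of $mL$ to $V$: the contribution of this twist must be shown to be $o(m^{k})$ so as not to spoil the asymptotic dimension count, a point where the analytic singularity hypothesis on $T$ (and, if needed, a small perturbation to ensure good multiplier-ideal behavior) enters in an essential way. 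Everything else---the reduction via Lemma \ref{volr}, the Bergman kernel expansion on $\tilde V$, and the comparison of integrals through the modification as in Lemma \ref{finite}---should be relatively routine once the extension step is in hand.
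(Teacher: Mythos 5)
There is no internal proof to compare against here: the paper imports Theorem \ref{hisamat} from Hisamoto \cite{Hi} and Matsumura \cite{Ma} without proof, so the relevant benchmark is those references, whose $L^2$-extension/Bergman-kernel strategy your sketch does follow in outline. But within the logical architecture of this paper your one ``completed'' step is circular: you quote \eqref{oneway} of Theorem \ref{regg} for the inequality $\vol_{X|V}(L)\leq\langle c_1(L)^{\dim V}\rangle_{X|V}$, whereas the paper's proof of Theorem \ref{regg} itself invokes Lemma \ref{volr} \emph{and Theorem \ref{hisamat}} (applied to the perturbations $L+\ve H$), so \eqref{oneway} is downstream of the very statement you are trying to prove. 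A blind proof of Theorem \ref{hisamat} must establish this direction independently, e.g.\ by attaching to a basis $(s_j)$ of the image of $H^0(X,mL)\to H^0(V,mL|_V)$ the Fubini--Study-type current $\frac{1}{m}\ddbar\log\sum_j|s_j|^2_{h_0^m}$ (corrected by a smooth representative of $c_1(L)$), which lies in $c_1(L)$, has analytic singularities not containing $V$, and whose restricted Monge--Amp\`ere mass asymptotically dominates the dimension count; this is the route of \cite{Bo,Hi,Ma} and is not formal.

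For the reverse inequality, what you have written is a plan rather than a proof: the Ohsawa--Takegoshi--type extension from $\tilde V$ to $\tilde X$ with the singular weight $m\pi^*\varphi$, and the lower bound $h^0\bigl(\tilde V,\pi^*(mL)|_{\tilde V}\otimes\mathcal{I}(m\pi^*\varphi|_{\tilde V})\bigr)\geq\frac{m^k}{k!}\int_{\tilde V}(\pi^*T|_{\tilde V})_{ac}^k-o(m^k)$, are precisely the technical content that \cite{Hi,Ma} supply, and you explicitly defer both (``the main obstacle''). Two points in particular are load-bearing and absent: (i) the extension theorem needs the strict positivity $T\geq\eta\omega$ of the K\"ahler currents produced by Lemma \ref{volr} to absorb the curvature requirements for $m$ large, and one must check that the extended sections are genuinely global holomorphic sections of $\pi^*(mL)$ (finite $L^2$ norm against the singular weight), not merely sections on a Zariski-open set; (ii) the passage from the multiplier-ideal-twisted $h^0$ on $\tilde V$ to the Monge--Amp\`ere integral requires resolving $\varphi$ to divisorial singularities (possibly with irrational coefficients, so a perturbation/approximation argument) and an asymptotic Riemann--Roch or singular Morse inequality on $\tilde V$. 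So the strategy is the right one --- essentially that of the cited sources --- but as submitted the argument has a genuine circularity in the easy direction and leaves the hard direction's key steps unproved.
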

By homogeneity, this result also holds when $L$ is just a $\mathbb{Q}$-divisor.
As remarked in the introduction, \eqref{equal3} remains also trivially true if $V\subset E_{\rm nn}(c_1(L))=\mathbb{B}_-(L)$ since both sides are zero. In general however \eqref{equal3} fails, see Examples \ref{exx} and \ref{exx2} below.

The correct substitute for \eqref{equal3} is given by Theorem \ref{regg} which we now prove.
\begin{proof}[Proof of Theorem \ref{regg}]
If $V\subset \mathbb{B}_{-}(L)=E_{\rm nn}(c_1(L))$ then $\langle c_1(L)^{\dim V}\rangle_{X|V}=0$ by definition, while also by definition we have $V\subset \mathbb{B}(L+\ve H)$ for all $\ve\in\mathbb{Q}_{>0}$, and so
$\vol_{X|V}(L+\ve H)=0$ too.

If instead $V\not\subset E_{\rm nn}(c_1(L))$, then essentially by definition
$$\langle c_1(L)^{\dim V}\rangle_{X|V}=\lim_{\ve\to 0,\ \ve\in\mathbb{Q}_{>0}}\sup_{T\in c_1(L+\ve H), T\geq 0, V\not\subset E_+(T)}\int_{V_{\rm reg}}(T|_{V_{\rm reg}})_{\rm ac}^{\dim V},$$
where the currents $T$ have analytic singularities. But we also have that $V\not\subset \mathbb{B}_+(L+\ve H)$ for all $\ve\in\mathbb{Q}_{>0},$ and so Lemma \ref{volr} and Theorem \ref{hisamat} give
$$\sup_{T\in c_1(L+\ve H), T\geq 0, V\not\subset E_+(T)}\int_{V_{\rm reg}}(T|_{V_{\rm reg}})_{\rm ac}^{\dim V}=\langle c_1(L+\ve H)^{\dim V}\rangle_{X|V}=\vol_{X|V}(L+\ve H),$$
and the result follows. Finally, inequality \eqref{oneway} follows from this and the fact that $\vol_{X|V}(L)\leq \vol_{X|V}(L+\ve H)$.
\end{proof}
\begin{rem}
The quantity on the RHS of \eqref{regular} has also recently been studied in \cite{CHPW}, where it is denoted by $\vol^+_{X|V}(L)$ (see their Definition 2.2).
\end{rem}

\begin{ex}\label{exx}
We give an example of a projective manifold $X$ with a big line bundle $L$ and an irreducible subvariety $V\subset \mathbb{B}_+(L)$ such that
\begin{equation}\label{different}
\langle c_1(L)^{\dim V}\rangle_{X|V}\neq\vol_{X|V}(L).
\end{equation}
which also shows that $\vol_{X|V}(L)$ is not in general a numerical invariant of the line bundle $L$ (more examples can be found in \cite[Example 5.10]{ELMNP}, see Example \ref{exx2} below, and in \cite[Example 2.3]{CHPW}).

Let $S$ be the ruled surface described in \cite[Example 10.3.3]{Laz}, which possesses two numerically equivalent nef and big line bundles $L_1,L_2$ with
$\mathbb{B}(L_1)=\emptyset, \mathbb{B}(L_2)\neq\emptyset.$ Let $X=S\times\mathbb{CP}^1$, with projections $\pi_1,\pi_2$, and let
$$\ti{L}_1=\pi_1^*L_1+\pi_2^*\mathcal{O}(1), \quad \ti{L}_2=\pi_1^*L_2+\pi_2^*\mathcal{O}(1).$$
These line bundles are still nef and big and numerically equivalent. We have that $\mathbb{B}(\ti{L}_1)=\emptyset$, while if $p\in\mathbb{B}(L_2)$ then
$V:=\{p\}\times\mathbb{CP}^1\subset \mathbb{B}(\ti{L}_2)$.
Since $\ti{L}_1,\ti{L}_2$ are nef and numerically equivalent, we obtain from Lemma \ref{l4} that
$$\langle c_1(\ti{L}_1)\rangle_{X|V}=\langle c_1(\ti{L}_2)\rangle_{X|V}=\int_{V}c_1(\ti{L}_1)=1,$$
while clearly
$$\vol_{X|V}(\ti{L}_2)=0,$$
since $V\subset \mathbb{B}(\ti{L}_2)$, and so $L:=\ti{L}_2$ gives an example of \eqref{different} Lastly, we check that
$$\vol_{X|V}(\ti{L}_1)>0,$$
which shows that the restricted volume is not a numerical invariant.
Indeed, since $\mathbb{B}(L_1)=\emptyset$, we can find a section $s\in H^0(S,kL_1), k\geq 1,$ with $s(p)\neq 0$.
Note that $k\ti{L}_1|_{V}\cong\mathcal{O}(k)$, so given any $s'\in H^0(V,k\ti{L}_1|_{V})=H^0(\mathbb{CP}^1,\mathcal{O}(k))$ we can regard $\pi_1^*s\otimes \pi_2^*s'$ as a section in
$H^0(X,k\ti{L}_1)$ which restricts to $s'$ on $V$. Therefore every section of $k\ti{L}_1|_{V}$ lifts to $X$, and the same holds for sections of $\ell k\ti{L}_1$, for every $\ell\geq 1$, and so we conclude that $\vol_{X|V}(\ti{L}_1)>0.$
\end{ex}

\section{The main conjecture}\label{sect3}
The following is our motivating problem, and is equivalent to Conjecture \ref{conjec}:
\begin{conj}\label{con3}
Let $X$ be a compact K\"ahler manifold, $[\alpha]$ a pseudoeffective class on $X$ and $V$ be an irreducible component of $E_{\rm nK}(\alpha)$ (which is necessarily positive-dimensional).
Then
$$\langle \alpha^{\dim V}\rangle_{X|V}=0.$$
\end{conj}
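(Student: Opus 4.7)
The plan is to reduce the statement to the nef case, which we established in \cite{CT}, by exploiting the divisorial Zariski decomposition $[\alpha]=P+N$ of Boucksom \cite{Bo2}. The non-big case is essentially trivial: when $[\alpha]$ is pseudoeffective but not big one has $\vol(\alpha)=0$, and the only relevant irreducible component is $V=X$, for which $\langle\alpha^n\rangle_{X|X}=\vol(\alpha)=0$ by definition. So I would assume $[\alpha]$ is big, and write $N=\sum_i\nu_i[N_i]$ for the negative part, which is effective.

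The first substantive step I would carry out is the structural identity
\[
E_{nK}(\alpha)=E_{nK}(P)\cup\mathrm{Supp}(N).
\]
The inclusion $\supset$ should follow because any K\"ahler current $T\in[\alpha]$, decomposed via Siu, must charge each $N_i$ with coefficient at least $\nu_i$, forcing $\mathrm{Supp}(N)\subset E_+(T)$; similarly, pulling back K\"ahler currents in $[\alpha]$ (after subtracting $[N]$) should give $E_{nK}(P)\subset E_{nK}(\alpha)$. For the reverse inclusion, given $x$ outside both sets, I would combine a K\"ahler current in $[P]$ that is smooth near $x$ with the current of integration $[N]$ to produce a K\"ahler current in $[\alpha]$ smooth near $x$. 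With this in hand, every irreducible component $V$ of $E_{nK}(\alpha)$ falls into one of two cases.

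In Case A, $V$ is contained in some $N_1$; any current $T\in[\alpha]$ with $T\geq-\ve\omega$ has generic Lelong number along $V$ bounded below by $\nu_1$, by the defining property of the negative part. Requiring $T$ to have analytic singularities not containing $V$ is then obstructed unless $\ve$ is comparable to $\nu_1$, and a controlled absorption argument using Lemma \ref{finite} should bound $\int_{V_{reg}}(T+\ve\omega)_{ac}^{\dim V}$ by a quantity that vanishes as $\ve\downarrow 0$. In Case B, $V$ is a component of $E_{nK}(P)$, and I would apply the nef case \cite{CT} (when $P$ is actually nef) to obtain $\int_V P^{\dim V}=0$; a smooth perturbation argument, writing each admissible current as a nearly-smooth representative of $[P]$ plus the fixed current $[N]$, should transfer this vanishing to the numerical restricted volume of $[\alpha]$.

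The principal obstacle is that the positive part $P$ is in general only nef in codimension $1$, not genuinely nef, so Case B cannot directly invoke \cite{CT}. This naturally restricts the approach to classes admitting an actual Zariski decomposition, possibly on some bimeromorphic model --- precisely the content of Theorem \ref{l5} and Theorem \ref{l7}. Proving the conjecture in full generality would require either producing a Zariski decomposition on some bimeromorphic model for every big class (known to fail in dimension $\geq 3$ by \cite{Bo2, Na}), or extending the Nakamaye-type theorem of \cite{CT} to classes that are merely nef in codimension $1$; this latter direction seems to be the essential remaining difficulty.
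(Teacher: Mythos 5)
The statement you are addressing is precisely the paper's Conjecture \ref{con3}; it is not proved in the paper in general, and your proposal likewise only establishes it when $[\alpha]$ admits a Zariski decomposition (possibly on a bimeromorphic model), which is exactly the content of Theorems \ref{l5} and \ref{l7}. For that case your route is essentially the paper's own: the identity $E_{nK}(\alpha)=E_{nK}(P)$ of Lemma \ref{l1} (note the paper shows $\mathrm{Supp}(N)\subset E_{nK}(P)$, so your union with $\mathrm{Supp}(N)$ is redundant), the transfer $\langle\alpha^{\dim V}\rangle_{X|V}=\langle P^{\dim V}\rangle_{X|V}$ of Lemma \ref{l3}, the nef formula $\langle P^{\dim V}\rangle_{X|V}=\int_V P^{\dim V}$ of Lemma \ref{l4}, and the main theorem of \cite{CT}. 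Two corrections to your sketch. First, your Case A requires no absorption argument at all: $V\subset\mathrm{Supp}(N)\subset E_{nn}(\alpha)$ forces $\langle\alpha^{\dim V}\rangle_{X|V}=0$ directly by the definition of the numerical restricted volume. Second, a current $T\in[\alpha]$ with $T\geq-\ve\omega$ only satisfies $\nu(T,D)\geq\nu_\ve(\alpha,D)$, the minimal multiplicity at level $\ve$, and not $\nu(T,D)\geq\nu(\alpha,D)$ as you assert; consequently one can subtract only $\mathcal{N}_\ve$ rather than the full negative part $N$, which is why the proof of Lemma \ref{l3} works with $\mathcal{N}_\ve$ plus an error $\psi(\ve)\to 0$ --- the same caveat applies to your Case B ``perturbation'' step, where writing an admissible current as a representative of $[P]$ plus the fixed current $[N]$ is not legitimate for fixed $\ve$. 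Finally, you correctly identify the genuine obstruction: when $P$ is only nef in codimension $1$ the reduction to \cite{CT} breaks down, and this is exactly why the general statement remains a conjecture in the paper.
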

Note that this is obviously true if $[\alpha]$ is not big.
In general, Conjecture \ref{con3} is equivalent to the equality
$$E_{\rm nK}(\alpha)=\Null(\alpha),$$
in Conjecture \ref{conjec} because it is immediate to see that if $V\not\subset E_{\rm nK}(\alpha)$ then $\langle\alpha^{\dim V}\rangle_{X|V}>0$.
Using Lemma \ref{l4}, we see that Conjecture \ref{con3} holds when $[\alpha]$ is nef, thanks to the main theorem of our earlier work \cite{CT}.

As a simple corollary of this conjecture, we obtain the transcendental analog of \cite[Theorems 5.2 (b) and 5.7]{ELMNP}:
\begin{prop}\label{con} Let $X$ be a compact K\"ahler manifold, $[\alpha]$ a pseudoeffective class on $X$ and $V$ be an irreducible component of $E_{\rm nK}(\alpha)$.
Assume Conjecture \ref{con3}. Then for any sequence of pseudoeffective classes $\alpha_j$ that converge to $\alpha$, we have that
$$\langle \alpha_j^{\dim V}\rangle_{X|V}\to 0.$$
\end{prop}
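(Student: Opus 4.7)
The plan is to argue by contradiction, combining the vanishing from Conjecture \ref{con3} with the semicontinuity statement of Lemma \ref{semi}. Applied to the class $[\alpha]$ and the irreducible component $V$ of $E_{nK}(\alpha)$, Conjecture \ref{con3} immediately yields $\langle \alpha^{\dim V}\rangle_{X|V}=0$. The task is then simply to upgrade this vanishing to a statement about approximating sequences, which is precisely what Lemma \ref{semi} is designed to do.

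Suppose the conclusion fails. Then there exist $\delta>0$ and a subsequence (which I shall relabel $\alpha_j$) with $\langle \alpha_j^{\dim V}\rangle_{X|V}>\delta$ for all $j$. By the very definition of the numerical restricted volume, this forces $V\not\subset E_{nn}(\alpha_j)$ for every $j$, since otherwise the restricted volume would vanish identically. This is exactly the hypothesis needed to apply Lemma \ref{semi} to the sequence $\alpha_j\to\alpha$, yielding
$$\langle \alpha^{\dim V}\rangle_{X|V}\geq\limsup_{j\to\infty}\langle\alpha_j^{\dim V}\rangle_{X|V}\geq\delta>0,$$
which contradicts Conjecture \ref{con3}.

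There is no real obstacle in this argument; the essential inputs — the vanishing of the restricted volume along components of $E_{nK}(\alpha)$, and the upper semicontinuity of $\langle\,\cdot\,^{\dim V}\rangle_{X|V}$ — have already been established or assumed in the preceding sections. The only slightly delicate point, worth stating explicitly when writing up, is the dichotomy that for each index $j$ either $V\subset E_{nn}(\alpha_j)$ (and the restricted volume is zero by convention) or $V\not\subset E_{nn}(\alpha_j)$ (and Lemma \ref{semi} applies). Passing to a subsequence where the restricted volume stays bounded away from zero automatically lands us in the second case, which is what allows the semicontinuity inequality to be invoked.
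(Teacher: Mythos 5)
Your argument is correct and coincides with the paper's own proof: contradiction via a subsequence with restricted volume bounded below by a positive constant, observing that this forces $V\not\subset E_{nn}(\alpha_j)$ so that Lemma \ref{semi} applies, and then contradicting the vanishing $\langle\alpha^{\dim V}\rangle_{X|V}=0$ given by Conjecture \ref{con3}. The explicit remark about the dichotomy $V\subset E_{nn}(\alpha_j)$ versus $V\not\subset E_{nn}(\alpha_j)$ is the same point the paper makes, just spelled out a bit more.
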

\begin{proof}
Suppose this is false, so that there exists a sequence of pseudoeffective classes $\alpha_j$ which converge to $\alpha$ and with
$$\langle \alpha_j^{\dim V}\rangle_{X|V}\geq \ve>0,$$
for all $j$. In particular we have $V\not\subset E_{\rm nn}(\alpha_j)$.
Applying Lemma \ref{semi} we conclude that $\langle \alpha^{\dim V}\rangle_{X|V}\geq \ve>0$, a contradiction to Conjecture \ref{con3}.
\end{proof}

Conjecture \ref{con3} would also give a new proof of a theorem of Ein-Lazarsfeld-Musta\c{t}\u{a}-Nakamaye-Popa \cite[Theorem 5.7]{ELMNP}, the main theorem of their paper:
\begin{prop}\label{prob2}
Let $X$ be a projective manifold and $D$ a big $\mathbb{R}$-divisor on $X$. Assume Conjecture \ref{con3}. If $V$ is one of the irreducible components of $E_{\rm nK}(D)$, then
\begin{equation}\label{wan}
\lim_{D'\to D}\vol_{X|V}(D')=0,
\end{equation}
where the limit is over all $\mathbb{Q}$-divisors $D'$ whose classes converge to the class of $D$.
\end{prop}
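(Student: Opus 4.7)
The plan is to deduce this statement directly from the already-established Proposition \ref{con} together with the one-sided inequality \eqref{oneway} of Theorem \ref{regg}. First, let $D'$ be a $\mathbb{Q}$-divisor whose numerical class is close to $[D]$. Since $[D]$ lies in the open big cone of $N^1(X)_{\mathbb{R}}$, we may assume without loss of generality that $[D']$ is big, and hence pseudoeffective, for every $D'$ appearing in the limit under consideration.

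Next, I would invoke the $\mathbb{Q}$-divisor version of Theorem \ref{regg} (the ``by homogeneity'' extension recorded immediately after its statement) to obtain
$$\vol_{X|V}(D') \le \langle c_1(D')^{\dim V}\rangle_{X|V}.$$
Since $V$ is an irreducible component of $E_{nK}(D)=E_{nK}(c_1(D))$ and $c_1(D')\to c_1(D)$ through pseudoeffective classes, Proposition \ref{con}, whose hypotheses are precisely Conjecture \ref{con3} together with this convergence, yields
$$\langle c_1(D')^{\dim V}\rangle_{X|V}\longrightarrow 0.$$
Combining the two displays gives \eqref{wan}.

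There is no substantive obstacle here: the real content has been pushed into Proposition \ref{con} (which repackages Conjecture \ref{con3} via the semicontinuity Lemma \ref{semi}) and into Theorem \ref{regg}, which itself rests on the Hisamoto--Matsumura identity (Theorem \ref{hisamat}) for subvarieties not contained in $\mathbb{B}_+$ and on the monotonicity $\vol_{X|V}(L)\leq \vol_{X|V}(L+\ve H)$ for ample $H$. The only point that warrants explicit verification is the openness of the big cone, used to reduce to pseudoeffective $D'$; this ensures that \eqref{oneway} genuinely applies to every sufficiently close approximant in the sense of the limit \eqref{wan}.
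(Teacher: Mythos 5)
Your argument is correct and is essentially identical to the paper's proof: one applies \eqref{oneway} (extended to $\mathbb{Q}$-divisors by homogeneity) to bound $\vol_{X|V}(D')$ by $\langle c_1(D')^{\dim V}\rangle_{X|V}$, and then concludes via Proposition \ref{con}. Your extra remark on the openness of the big cone, guaranteeing that nearby $\mathbb{Q}$-divisor classes are big and hence pseudoeffective, is a harmless verification that the paper leaves implicit.
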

\begin{proof}
Let $D'\to D$ be $\mathbb{Q}$-divisors as in the statement.
Thanks to \eqref{oneway}, which by homogeneity holds for $\mathbb{Q}$-divisors, we have that
$$\vol_{X|V}(D')\leq \langle c_1(D')^{\dim V}\rangle_{X|V},$$
and so the result follows from Proposition \ref{con}.
\end{proof}
Conversely, \cite[Theorem 5.7]{ELMNP} implies Conjecture \ref{con3} when $X$ is projective and $[\alpha]=c_1(D)$ for $D$ a big $\mathbb{R}$-divisor:
\begin{proof}[Proof of Proposition \ref{prob1}]
Let $V$ be an irreducible component of $E_{\rm nK}(c_1(D))$, of dimension $k>0$, and fix an ample $\mathbb{Q}$-divisor $H$ such that $D+\lambda H$ is a $\mathbb{Q}$-divisor for some $\lambda\in\mathbb{R}_{>0}$. We may assume that $V\not\subset E_{\rm nn}(c_1(D))$, otherwise the result is trivial. Then from the definition we have
$$\langle c_1(D)^k\rangle_{X|V}=\lim_{\ve\downarrow 0}\sup_{T\in c_1(D+\ve H), T\geq 0,V\not\subset E_+(T)}\int_{V_{\rm reg}}(T|_{V_{\rm reg}})_{\rm ac}^k,$$
where the currents $T$ have analytic singularities. By construction, there is a sequence $\ve_j\in\mathbb{R}_{>0}$ with $\ve_j\to 0$ such that $D+\ve_jH$ is a $\mathbb{Q}$-divisor, and so suppressing the index $j$ from the notation we may assume that $D+\ve H$ is a $\mathbb{Q}$-divisor.

Then note that $V\not\subset E_{\rm nK}(D+\ve H)$ for all such $\ve>0$, and so Lemma \ref{volr} and Theorem \ref{hisamat} give
$$\sup_{T\in c_1(D+\ve H), T\geq 0,V\not\subset E_+(T)}\int_{V_{\rm reg}}(T|_{V_{\rm reg}})_{\rm ac}^k=\langle c_1(D+\ve H)^{\dim V}\rangle_{X|V}=\vol_{X|V}(D+\ve H).$$
But now \cite[Theorem 5.7]{ELMNP} says that $\lim_{\ve\to 0}\vol_{X|V}(D+\ve H)=0$, and the result follows.
\end{proof}

We close this section with the very simple proof of Theorem \ref{localamp}:
\begin{proof}[Proof of Theorem \ref{localamp}]
Given any $x\in U$ let $V$ be any irreducible subvariety of $X$ which passes through $x$, and say $\dim V=k>0$. By definition we have
$$\langle\alpha^k\rangle_{X|V}\geq \int_{V_{\rm reg}} (T|_V)_{\rm ac}^k>0,$$
where the last inequality follows from the fact that $T$ is a smooth K\"ahler metric near $x$. Since $V$ is arbitrary, it follows from Conjecture \ref{con3} that $x\not\in E_{\rm nK}(\alpha)$, as required.
\end{proof}

\section{Technical lemmas}\label{sect4}
In this section we prove some technical results, which will be used in the proofs of the main theorems. First of all, we note the following (cf. \cite[Corollary 4.6]{CL} in the algebraic case):
\begin{lem}\label{nonnef}
If $\mu:\ti{X}\to X$ is a surjective holomorphic map between compact K\"ahler manifolds and $[\alpha]$ is a big class on $X$, then
$$E_{\rm nn}(\mu^*\alpha)=\mu^{-1}(E_{\rm nn}(\alpha)).$$
\end{lem}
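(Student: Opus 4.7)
I plan to prove both set inclusions separately.

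For $E_{nn}(\mu^*\alpha) \subset \mu^{-1}(E_{nn}(\alpha))$, I argue by contrapositive: suppose $\mu(\ti{x}) =: x \notin E_{nn}(\alpha)$, so $\nu(\alpha, x) = 0$. Given $\ve > 0$, pick $C_1$ with $\mu^*\omega \leq C_1\ti{\omega}$ and $\delta \leq \ve/(2C_1)$. Since $\nu(\alpha + \delta\omega, x) = 0$, Demailly's regularization applied to a minimal-singularity current in $[\alpha + \delta\omega]$ produces $T \in [\alpha + \delta\omega]$ with analytic singularities, $T \geq -\eta\omega$, and $\nu(T, x) < \eta$ for any $\eta > 0$. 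Because $T$ has analytic singularities and $\mu$ is surjective, $\mu^*T$ is a well-defined closed $(1,1)$-current on $\ti{X}$ in $[\mu^*\alpha + \delta\mu^*\omega]$, with $\mu^*T \geq -\eta C_1\ti{\omega}$ and $\nu(\mu^*T, \ti{x}) \leq e(\mu,\ti{x})\,\nu(T, x) < e(\mu,\ti{x})\eta$ via the standard pullback estimate for analytic-singularity potentials (here $e(\mu, \ti{x})$ is the local multiplicity of $\mu$ at $\ti{x}$). Subtracting the smooth form $\delta\mu^*\omega$ returns us to $[\mu^*\alpha]$ at the cost of at most $O(\delta)\ti{\omega}$ extra negativity; letting $\eta \to 0$ first and $\delta \to 0$ second exhibits currents in $[\mu^*\alpha]$ with arbitrarily small Lelong number at $\ti{x}$, proving $\nu(\mu^*\alpha, \ti{x}) = 0$.

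For the reverse inclusion $\mu^{-1}(E_{nn}(\alpha)) \subset E_{nn}(\mu^*\alpha)$, assume $x = \mu(\ti{x}) \in E_{nn}(\alpha)$, so $\nu(\alpha, x) > 0$. Set $k := \dim\ti{X} - \dim X$ and $d := \int_F \ti{\omega}^k$ for $F$ a generic fiber, so $\mu_*[\ti{\omega}^k] = d \in H^{0,0}(X)$. For any positive $\ti{S} \in [\mu^*\alpha + \ve\ti{\omega}]$, the fiber-integration pushforward $T_{\ti{S}} := \tfrac{1}{d}\mu_*(\ti{S}\wedge\ti{\omega}^k)$ is a positive $(1,1)$-current on $X$ in $[\alpha + \tfrac{\ve}{d}\mu_*[\ti{\omega}^{k+1}]]$; after a smooth correction by a form bounded by $O(\ve)\omega$, this lies in $[\alpha + c\ve\omega]$ and, by lower semicontinuity of $\nu(\cdot, x)$ in the class together with $\nu(\alpha, x) > 0$, it has Lelong number at $x$ bounded below by some $\gamma > 0$ for $\ve$ sufficiently small. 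Simultaneously, pulling back Demailly-regularized approximants of a minimal-singularity current $T_{\min,\delta}$ in $[\alpha + \delta\omega]$ provides positive currents $\mu^*T_{\min,\delta} \in [\mu^*\alpha + \delta\mu^*\omega]$ with analytic singularities and $\nu(\mu^*T_{\min,\delta}, \ti{x}) \geq \nu(T_{\min,\delta}, x) > 0$ via the elementary inequality $\nu(\vp\circ\mu, \ti{x}) \geq \nu(\vp, \mu(\ti{x}))$ for psh potentials. Combining the pushforward-based lower bound on $\nu(T_{\ti{S}}, x)$ with the pullback-based pointwise control via minimal-singularity currents yields the pointwise pullback inequality $\nu(\mu^*\alpha, \ti{x}) \geq \nu(\alpha, x)$, which places $\ti{x}$ in $E_{nn}(\mu^*\alpha)$.

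The main obstacle is genuinely the reverse inclusion. The pushforward $\mu_*(\ti{S}\wedge\ti{\omega}^k)$ has Lelong number at $x$ reflecting mass along the entire fiber $\mu^{-1}(x)$, not at the specific preimage $\ti{x}$, so the pushforward picture alone does not force $\nu(\ti{S}, \ti{x}) > 0$ for a general competitor $\ti{S}$. What bridges the gap is the pullback inequality above, which is a pointwise statement about Lelong numbers of psh potentials under holomorphic maps, combined with the fact that minimal-singularity currents in $[\alpha + \delta\omega]$ dominate the singularity of any positive current after the smooth correction. The bigness of $[\alpha]$ is used to ensure the existence of sufficiently well-behaved minimal-singularity currents amenable to Demailly's regularization and clean pullback.
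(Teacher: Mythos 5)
Your first inclusion is fine and is essentially the same mechanism the paper uses: the Favre--Kiselman upper bound on Lelong numbers of pullbacks, applied to (regularizations of) minimal-singularity currents in $[\alpha+\delta\omega]$, shows $\nu(\mu^*\alpha,\ti{x})=0$ whenever $\nu(\alpha,\mu(\ti{x}))=0$.

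The reverse inclusion, however, has a genuine gap, and it is exactly at the point you flag. To place $\ti{x}$ in $E_{nn}(\mu^*\alpha)$ you must bound $\nu(\ti{S},\ti{x})$ from below uniformly over \emph{all} positive currents $\ti{S}$ in the approximating classes, and neither of your two ingredients does this. The fiber-integration pushforward $\frac{1}{d}\mu_*(\ti{S}\wedge\ti{\omega}^k)$ only records mass along the whole fiber $\mu^{-1}(x)$: a competitor $\ti{S}$ could concentrate its singularity at some other point $\ti{x}'\in\mu^{-1}(x)$ and have $\nu(\ti{S},\ti{x})=0$ while the pushforward still has Lelong number $\geq\gamma$ at $x$. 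The pullback inequality $\nu(\vp\circ\mu,\ti{x})\geq\nu(\vp,\mu(\ti{x}))$ applied to $T_{\min,\delta}$ only produces \emph{particular} currents in the pullback class with large Lelong number at $\ti{x}$; it says nothing about other competitors. Your concluding sentence, that combining these ``yields the pointwise pullback inequality $\nu(\mu^*\alpha,\ti{x})\geq\nu(\alpha,x)$'', simply asserts the statement to be proved rather than deriving it. The missing ingredient is precisely the one the paper invokes: by \cite[Proposition 1.12]{BEGZ}, when $\mu$ is surjective the pullback $\mu^*T_{\min}$ of a current with minimal singularities again has minimal singularities in $[\mu^*\alpha]$ (the underlying reason being that potentials of currents in the pullback class are psh, hence constant, on the connected compact fibers after Stein factorization, so they descend and can be compared with $T_{\min}$ downstairs). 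With that fact, every competitor is at least as singular as $\mu^*T_{\min}$, and the elementary lower bound on Lelong numbers of pullbacks then gives $\nu(\mu^*\alpha,\ti{x})>0$; without it, or some equivalent descent argument, your bridge does not close the gap.
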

\begin{proof}
Let $T$ be a closed positive current in the class $[\alpha]$ with minimal singularities, so by definition $E_{\rm nn}(\alpha)=\{x\in X\ |\ \nu(T,x)>0\}$.
Thanks to \cite[Proposition 1.12]{BEGZ}, $\mu^*T$ has minimal singularities in the class $[\mu^*\alpha]$, so
$E_{\rm nn}(\mu^*\alpha)=\{x\in \ti{X}\ |\ \nu(\mu^*T,x)>0\}$. A result of Favre \cite{Fa} and Kiselman \cite{Ki} implies that $\nu(\mu^*T,x)>0$ iff $\nu(T,\mu(x))>0$, and the result follows.
\end{proof}

The next result that we will need is the bimeromorphic invariance of the numerical restricted volume, which is analogous to \cite[Lemma 2.4]{ELMNP} in the algebraic case:
\begin{lem}\label{invar}
Let $\mu:\ti{X}\to X$ be a modification between compact K\"ahler manifolds, which is a composition of blowups with smooth centers, and let $V\subset X$ be a $k$-dimensional irreducible subvariety not contained
in $\mu(\mathrm{Exc}(\mu))$, so that its proper transform $\ti{V}$ is well-defined and $\mu|_{\ti{V}}:\ti{V}\to V$ is a modification.
If $[\alpha]$ is a big $(1,1)$ class on $X$ then
$$\langle\alpha^k\rangle_{X|V}=\langle\mu^*\alpha^k\rangle_{\ti{X}|\ti{V}}.$$
\end{lem}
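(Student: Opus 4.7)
The plan is to reduce via Lemma~\ref{nonnef} to a nontrivial case, prove one inequality by pulling back currents, and handle the reverse inequality via the non-pluripolar formulation \eqref{nonp}, which is the main obstacle. By Lemma~\ref{nonnef}, $E_{nn}(\mu^*\alpha)=\mu^{-1}(E_{nn}(\alpha))$; since $\mu|_{\ti V}\colon \ti V\to V$ is surjective, the conditions $V\subset E_{nn}(\alpha)$ and $\ti V\subset E_{nn}(\mu^*\alpha)$ are equivalent, and in that case both numerical restricted volumes vanish by definition. Assume henceforth $V\not\subset E_{nn}(\alpha)$, and fix K\"ahler forms $\omega$ on $X$ and $\ti\omega$ on $\ti X$ with $\ti\omega\geq \mu^*\omega$ (arranged by rescaling $\ti\omega$).

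For the inequality $\langle\alpha^k\rangle_{X|V}\leq\langle(\mu^*\alpha)^k\rangle_{\ti X|\ti V}$, I would pull back admissible competitors. Given $T\in[\alpha]$ with $T\geq -\ve\omega$ and analytic singularities not containing $V$, the pullback $\mu^*T\in[\mu^*\alpha]$ satisfies $\mu^*T\geq -\ve\mu^*\omega\geq -\ve\ti\omega$ and has analytic singularities not containing $\ti V$, hence it competes in the supremum on $\ti X$ at scale $\ve$. The bimeromorphic change of variable from the proof of Lemma~\ref{finite}, applied termwise to the binomial expansion of the $k$-th wedge power, yields
\[
\int_{V_{reg}}((T+\ve\omega)|_{V_{reg}})_{ac}^{k}=\int_{\ti V_{reg}}((\mu^*T+\ve\mu^*\omega)|_{\ti V_{reg}})_{ac}^{k}\leq \int_{\ti V_{reg}}((\mu^*T+\ve\ti\omega)|_{\ti V_{reg}})_{ac}^{k},
\]
where the last step uses monotonicity of wedge products of nonnegative $(1,1)$-forms together with $\ti\omega-\mu^*\omega\geq 0$. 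Taking $\sup_T$ and then $\ve\to 0$ gives the inequality.

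The reverse inequality is the main obstacle, as the naive dual strategy of pushing forward a competitor $S$ on $\ti X$ fails: $\mu_*\ti\omega$ is a positive $(1,1)$-current on $X$ with singularities along $\mu(\mathrm{Exc}(\mu))$ that are not dominated by any multiple of $\omega$, so one cannot verify $\mu_*S\geq -\ve'\omega$. Instead I would use the characterization \eqref{nonp} via minimal-singularity currents. Letting $T_{\min,\ve}$ be a positive current with minimal singularities in $[\alpha+\ve\omega]$, by \cite[Proposition~1.12]{BEGZ} the pullback $\mu^*T_{\min,\ve}$ has minimal singularities in $[\mu^*\alpha+\ve\mu^*\omega]$, and the non-pluripolar analog of the change of variable in Lemma~\ref{finite} gives
\[
\int_{V_{reg}}\langle(T_{\min,\ve}|_{V_{reg}})^{k}\rangle = \int_{\ti V_{reg}}\langle(\mu^*T_{\min,\ve}|_{\ti V_{reg}})^{k}\rangle,
\]
whose limit as $\ve\to 0$ equals $\langle\alpha^k\rangle_{X|V}$ by \eqref{nonp}. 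The remaining task is to identify the limit of the right-hand side with $\langle(\mu^*\alpha)^k\rangle_{\ti X|\ti V}$, i.e.\ to extend the independence of the K\"ahler $\ve$-perturbation (noted after \eqref{voldef}) to the degenerate semipositive perturbation $\ve\mu^*\omega$. The plan is to do this by inserting the genuine K\"ahler form $\mu^*\omega+\delta\ti\omega$ on $\ti X$ for small $\delta>0$, applying the monotonicity of non-pluripolar products under decreasing singularities of potentials \cite[Theorem~1.16]{BEGZ} to sandwich the associated minimal-singularity currents between $\mu^*T_{\min,\ve}$ and the minimal-singularity current in $[\mu^*\alpha+\ve\ti\omega]$, and then letting $\delta\to 0$ after $\ve\to 0$.
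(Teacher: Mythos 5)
Your reduction to the case $V\not\subset E_{nn}(\alpha)$ and your proof of $\langle\alpha^k\rangle_{X|V}\leq\langle\mu^*\alpha^k\rangle_{\ti{X}|\ti{V}}$ by pulling back competitors are correct and essentially identical to the paper's. The gap is in the reverse inequality, which is the real content of the lemma: your sandwich only compares $\mu^*T_{min,\ve}$ with currents that are \emph{less} singular than it, so monotonicity runs the wrong way. Concretely, normalizing $\ti{\omega}\geq 2\mu^*\omega$ and taking $\delta\leq 1/2$, if $S_{min,\ve,\delta}$ and $S_{min,\ve}$ denote minimal-singularity currents in $[\mu^*\alpha+\ve(\mu^*\omega+\delta\ti{\omega})]$ and $[\mu^*\alpha+\ve\ti{\omega}]$ respectively, then \cite[Theorem 1.16]{BEGZ} gives
\begin{equation*}
\int_{\ti{V}}\langle(\mu^*T_{min,\ve}|_{\ti{V}})^k\rangle\ \leq\ \int_{\ti{V}}\langle (S_{min,\ve,\delta}|_{\ti{V}})^k\rangle\ \leq\ \int_{\ti{V}}\langle (S_{min,\ve}|_{\ti{V}})^k\rangle,
\end{equation*}
and as $\ve\to 0$ the left-hand side tends to $\langle\alpha^k\rangle_{X|V}$ while, for each fixed $\delta>0$, both the middle and the right-hand side tend to $\langle\mu^*\alpha^k\rangle_{\ti{X}|\ti{V}}$ (since $\mu^*\omega+\delta\ti{\omega}$ is K\"ahler); letting $\delta\to 0$ afterwards changes nothing. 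So your scheme only re-derives the easy inequality and never bounds $\langle\mu^*\alpha^k\rangle_{\ti{X}|\ti{V}}$ from above by data on $X$. The obstruction is structural: to dominate $S_{min,\ve,\delta}$ by some $\mu^*T_{min,\ve'}$ you would need $(\ve'-\ve)\mu^*\omega-\ve\delta\ti{\omega}$ to be pseudoeffective for some $\ve'$, and it never is, since its restriction to any curve contracted by $\mu$ has negative degree. The "remaining task" you isolate (extending \eqref{nonp} to the degenerate perturbation $\ve\mu^*\omega$) is in fact equivalent to the inequality being proved, and the proposed argument does not address it.

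The paper closes this by exactly the pushforward strategy you dismissed, made to work by a specific choice of K\"ahler metric on $\ti{X}$: write $\eta=[E]-\ddbar\psi$ with $\eta$ a smooth form in the class of the exceptional divisor $E=\mathrm{Exc}(\mu)$ and $\psi$ quasi-psh, and take $\ti{\omega}:=\mu^*\omega-\delta\eta$, K\"ahler for small $\delta>0$. The subtraction of the exceptional class is precisely what your choice $\mu^*\omega+\delta\ti{\omega}$ lacks. Then any competitor $\ti{T}\in[\mu^*\alpha]$ with analytic singularities not containing $\ti{V}$ and $\ti{T}\geq-\ve\ti{\omega}$ satisfies $\ti{T}-\ve\delta\eta\geq-\ve\mu^*\omega$, and $T:=\mu_*(\ti{T}-\ve\delta\eta)$ lies in $[\alpha]$ (because $\mu_*\eta$ is cohomologous to zero), satisfies $T\geq-\ve\omega$, is smooth at the generic point of $V$, and $\int_{\ti{V}_{reg}}(\ti{T}|_{\ti{V}_{reg}}+\ve\ti{\omega})_{ac}^k=\int_{V_{reg}}(T|_{V_{reg}}+\ve\omega)_{ac}^k$. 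Since $T$ need not have analytic singularities, one then applies Demailly regularization to get admissible competitors $T_j\geq-2\ve\omega$ and uses Fatou's lemma before taking the supremum and $\ve\to 0$. Your objection that $\mu_*\ti{\omega}$ is too singular is thus circumvented by never pushing $\ti{\omega}$ forward as such, but only after absorbing its exceptional part via $\eta$; if you want to keep your non-pluripolar reformulation, you would still need this $\eta$-correction to compare the classes, at which point the paper's argument is the shorter route.
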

\begin{proof}
If $V\subset E_{\rm nn}(\alpha)$ then by definition $\langle\alpha^k\rangle_{X|V}=0$. Since $E_{\rm nn}(\mu^*\alpha)=\mu^{-1}(E_{\rm nn}(\alpha))$ by Lemma \ref{nonnef}, we see that
$\ti{V}\subset E_{\rm nn}(\mu^*\alpha)$ and so $\langle\mu^*\alpha^k\rangle_{\ti{X}|\ti{V}}=0$ too. So we assume that $V\not\subset E_{\rm nn}(\alpha)$, so that
$$\langle\alpha^k\rangle_{X|V}=\lim_{\ve\downarrow 0} \sup_{T\in[\alpha],T\geq-\ve\omega,V\not\subset E_+(T)}\int_{V_{\rm reg}}(T|_{V_{\rm reg}}+\ve\omega)_{\rm ac}^k.$$
If $T$ is any such current, then $\mu^*T\in [\mu^*\alpha]$ has analytic singularities which don't contain $\ti{V}$, and satisfies
$\mu^*T\geq -\ve\mu^*\omega$. Fix $\ti{\omega}$ a K\"ahler metric on $\ti{X}$. Then $\mu^*\omega\leq C\ti{\omega}$ for some constant $C$, and so
$\mu^*T\geq -\ve C\ti{\omega}$. Since $\mu|_{\ti{V}}:\ti{V}\to V$ is a modification we have
\[\begin{split}
\int_{V_{\rm reg}}(T|_{V_{\rm reg}}+\ve\omega)_{\rm ac}^k&=\int_{\ti{V}_{\rm reg}}(\mu^*T|_{\ti{V}_{\rm reg}}+\ve\mu^*\omega)_{\rm ac}^k
\leq\int_{\ti{V}_{\rm reg}}(\mu^*T|_{\ti{V}_{\rm reg}}+\ve C\ti{\omega})_{\rm ac}^k,
\end{split}\]
and taking the supremum over all $T$ and letting $\ve\to 0$ we conclude that
$$\langle\alpha^k\rangle_{X|V}\leq\langle\mu^*\alpha^k\rangle_{\ti{X}|\ti{V}}.$$
For the converse, let $E=\mathrm{Exc}(\mu)$ be the union of all the exceptional divisors of $\mu$. Then there are a smooth form $\eta$,
a quasi-psh function $\psi$ and $\delta>0$ small such that
$$\eta=[E]-\ddbar\psi,$$
and
$$\ti{\omega}:=\mu^*\omega-\delta\eta,$$
is a K\"ahler metric on $\ti{X}$, see e.g. \cite[Lemma 6]{PS}.
Let $\ti{T}$ be a current on $\ti{X}$ in $[\mu^*\alpha]$, with analytic singularities which don't contain $\ti{V}$ and with
$\ti{T}\geq -\ve\ti{\omega}$. Then $\ti{T}-\ve\delta\eta\geq -\ve\mu^*\omega,$ and since the current $\mu_*\eta$ is cohomologous to zero on $X$, we see that
$T:=\mu_*(\ti{T}-\ve\delta\eta)\geq -\ve\omega$ is a current on $X$ in the class $[\alpha]$, which is smooth at the generic point of $V$.
We have
\[\begin{split}
\int_{\ti{V}_{\rm reg}}(\ti{T}|_{\ti{V}_{\rm reg}}+\ve\ti{\omega})_{\rm ac}^k&= \int_{\ti{V}_{\rm reg}}(\ti{T}|_{\ti{V}_{\rm reg}}-\ve\delta\eta+\ve\mu^*\omega)_{\rm ac}^k=\int_{V_{\rm reg}}(T|_{V_{\rm reg}}+\ve\omega)_{\rm ac}^k.
\end{split}\]
If $T_j$ is a Demailly regularization of $T$, then $T_j\geq -2\ve\omega$ for all $j$ large, $T_j\in[\alpha]$ have analytic singularities which do not contain $V$, and
$(T_j)_{\rm ac}(x)\to T_{\rm ac}(x)$ pointwise for all $x\in V$ generic. By Fatou's lemma,
$$\liminf_{j\to\infty}\int_{V_{\rm reg}}(T_j|_{V_{\rm reg}}+2\ve\omega)_{\rm ac}^k\geq \int_{V_{\rm reg}}(T|_{V_{\rm reg}}+2\ve\omega)_{\rm ac}^k\geq \int_{V_{\rm reg}}(T|_{V_{\rm reg}}+\ve\omega)_{\rm ac}^k.$$
Choosing $j$ large, we get
$$\int_{\ti{V}_{\rm reg}}(\ti{T}|_{\ti{V}_{\rm reg}}+\ve\ti{\omega})_{\rm ac}^k\leq \int_{V_{\rm reg}}(T_j|_{V_{\rm reg}}+2\ve\omega)_{\rm ac}^k+\ve.$$
Taking the supremum over all $\ti{T}$ and letting $\ve\to 0$ we conclude that
$$\langle\mu^*\alpha^k\rangle_{\ti{X}|\ti{V}}\leq\langle\alpha^k\rangle_{X|V}.$$
\end{proof}
We now recall the following lemma proved in \cite[Proposition 2.5]{To3}:
\begin{lem}\label{pull}
Let $\mu:\ti{X}\to X$ be a modification between compact K\"ahler manifolds. If $[\alpha]$ is any $(1,1)$ class on $X$ then
$$E_{\rm nK}(\mu^*\alpha)=\mu^{-1}(E_{\rm nK}(\alpha))\cup\mathrm{Exc}(\mu).$$
\end{lem}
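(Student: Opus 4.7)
The plan is to prove the two set inclusions separately; the direction $E_{nK}(\mu^*\alpha)\subseteq\mu^{-1}(E_{nK}(\alpha))\cup\mathrm{Exc}(\mu)$ is a ``pull-up'' construction paralleling the proof of Lemma \ref{invar}, while the reverse inclusion splits into the two claims $\mathrm{Exc}(\mu)\subseteq E_{nK}(\mu^*\alpha)$ and $\mu^{-1}(E_{nK}(\alpha))\subseteq E_{nK}(\mu^*\alpha)$. The case when $[\alpha]$ is not big is trivial, since then $[\mu^*\alpha]$ is not big and both sides equal $\ti X$, so assume $[\alpha]$ big throughout.

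For the $\subseteq$ direction, let $x$ lie outside the right-hand side, so $\mu(x)\notin E_{nK}(\alpha)$ and $x\notin\mathrm{Exc}(\mu)$. Pick a K\"ahler current $T\in[\alpha]$ with analytic singularities satisfying $T\geq\delta\omega$ that is smooth at $\mu(x)$, and take the smooth form $\eta$ cohomologous to $[\mathrm{Exc}(\mu)]$, the quasi-psh function $\psi$ with analytic singularities along $\mathrm{Exc}(\mu)$ satisfying $\eta+\ddb\psi=[\mathrm{Exc}(\mu)]$, and $\delta_0>0$ such that $\ti\omega:=\mu^*\omega-\delta_0\eta$ is K\"ahler on $\ti X$, exactly as in the proof of Lemma \ref{invar}. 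Setting $\ti T:=\mu^*T+\delta\delta_0\ddb\psi\in[\mu^*\alpha]$, a direct computation gives
\[
\ti T \geq \delta\mu^*\omega+\delta\delta_0([\mathrm{Exc}(\mu)]-\eta) = \delta\ti\omega+\delta\delta_0[\mathrm{Exc}(\mu)] \geq \delta\ti\omega,
\]
so $\ti T$ is a K\"ahler current in $[\mu^*\alpha]$ with analytic singularities contained in $\mu^{-1}(\mathrm{Sing}(T))\cup\mathrm{Exc}(\mu)$. Since $x$ lies outside this set, $\ti T$ is smooth at $x$, proving $x\notin E_{nK}(\mu^*\alpha)$.

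For the reverse inclusion, the first claim $\mathrm{Exc}(\mu)\subseteq E_{nK}(\mu^*\alpha)$ is handled by a ``zero-class'' argument: suppose for contradiction that $\ti T$ is a K\"ahler current in $[\mu^*\alpha]$ with analytic singularities smooth at some $x\in\mathrm{Exc}(\mu)$, and let $F$ be an irreducible positive-dimensional component of the fiber $\mu^{-1}(\mu(x))$ through $x$. After an embedded resolution $\sigma:\tilde{\ti X}\to\ti X$ inside the K\"ahler manifold $\tilde{\ti X}$, the proper transform $\hat F$ is smooth and compact K\"ahler. The restriction $\sigma^*\ti T|_{\hat F}$ is well-defined (since $\hat F$ is not contained in $\mathrm{Sing}(\sigma^*\ti T)$) and is a closed positive $(1,1)$ current on $\hat F$, whose cohomology class $[(\mu\circ\sigma)^*\alpha]|_{\hat F}$ vanishes because $\mu\circ\sigma$ collapses $\hat F$ to the point $\mu(x)$. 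But the only closed positive $(1,1)$ current in the zero class on a compact K\"ahler manifold is the zero current, whereas $\sigma^*\ti T|_{\hat F}$ is a nonzero smooth K\"ahler form near any point $x_0\in\sigma^{-1}(x)\cap\hat F_{reg}$ (since $\ti T$ is smooth K\"ahler at $x$ and $\sigma$ is biholomorphic there), a contradiction.

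The hardest step is $\mu^{-1}(E_{nK}(\alpha))\subseteq E_{nK}(\mu^*\alpha)$. Given $x\notin\mathrm{Exc}(\mu)$ with $\mu(x)\in E_{nK}(\alpha)$ and a hypothetical K\"ahler current $\ti T\in[\mu^*\alpha]$ with analytic singularities smooth at $x$, the pushforward $T:=\mu_*\ti T\in[\alpha]$ is a closed positive current that, because $\mu$ is biholomorphic near $x$, is smooth and strictly positive (hence K\"ahler) on a neighborhood $V$ of $\mu(x)$. To reach a contradiction with $\mu(x)\in E_{nK}(\alpha)$, one must upgrade $T$ to a globally K\"ahler current in $[\alpha]$ with analytic singularities still smooth at $\mu(x)$. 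Writing $T=\alpha_0+\ddb u$ with $u$ smooth on $V$, and picking a K\"ahler current $S=\alpha_0+\ddb v\in[\alpha]$ with analytic singularities chosen so that $\mathrm{Sing}(S)$ is as close to $E_{nK}(\alpha)$ as possible (in the sense of Boucksom's minimal singularities), the natural construction is the Demailly regularized max $\alpha_0+\ddb\max_\eta(u,v+c)$ for a well-chosen constant $c$: near $\mu(x)$ it selects $u$ (since $v\to-\infty$ there), while far from $\mu(x)$ it selects $v+c$ and inherits K\"ahlerness from $S$. The main obstacle is ensuring the transition region is contained in the overlap of the K\"ahler loci of $T$ and $S$, so that the resulting current is K\"ahler globally and not merely positive; this requires a careful choice of $c$ and of $V$ exploiting the local K\"ahlerness $T\geq\delta\omega$ on $V$.
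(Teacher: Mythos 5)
The paper itself only quotes this lemma from \cite[Proposition 2.5]{To2}, so your argument has to stand on its own. Your reduction to the big case, the inclusion $E_{nK}(\mu^*\alpha)\subseteq\mu^{-1}(E_{nK}(\alpha))\cup\mathrm{Exc}(\mu)$ via $\ti{T}=\mu^*T+\delta\delta_0\ddb\psi$, and the idea behind $\mathrm{Exc}(\mu)\subseteq E_{nK}(\mu^*\alpha)$ are sound in substance. Two inaccuracies in the fiber argument should be repaired, though: here $X,\ti{X}$ are only compact complex manifolds and $\mu$ is a general modification, so you cannot assume the resolved model (hence $\hat F$) is K\"ahler, and $\sigma$ need not be biholomorphic over $x$ when $x$ is a singular point of $F$. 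Both are harmless: since $(\mu\circ\sigma)|_{\hat F}$ is constant, the pullback to $\hat F$ of a smooth representative of $[\alpha]$ vanishes identically, so $\sigma^*\ti{T}|_{\hat F}$ is $\ddb$ of a psh function on the compact manifold $\hat F$ and is therefore zero, while it is strictly positive at points of $\hat F$ lying over smooth points of $F$ inside the neighbourhood of $x$ on which $\ti{T}$ is a K\"ahler form. (Similarly, the ansatz $\eta,\psi,\ \mu^*\omega-\delta_0\eta>0$ is stated in Lemma \ref{invar} for compositions of blowups of a K\"ahler $X$; for a general modification of a compact complex $X$ one needs the corresponding standard statement with a suitable exceptional divisor, which you should at least flag.)

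The genuine gap is the inclusion $\mu^{-1}(E_{nK}(\alpha))\subseteq E_{nK}(\mu^*\alpha)$, which you explicitly leave unfinished: you describe a gluing with an auxiliary current $S$ and name the ``main obstacle'' without resolving it, and the sketch as written does not work. The potential $v$ of $S$ is $-\infty$ only on the analytic set $E_+(S)$, not on a neighbourhood of $\mu(x)$, so the regularized maximum has no reason to ``select $u$'' near $\mu(x)$; and even granting the gluing, $\max(u,v+c)$ need not have analytic singularities (the pushforward potential $u$ does not in general), so the glued current cannot be fed into the definition of $E_{nK}(\alpha)$ without a further regularization. The missing observation that makes this step easy is that $T=\mu_*\ti{T}$ is a K\"ahler current \emph{globally}, not just near $\mu(x)$: by compactness $\ti{\omega}\geq c\,\mu^*\omega$ for some $c>0$, so $\ti{T}\geq\delta\ti{\omega}\geq\delta c\,\mu^*\omega$, and since $\mu$ is generically injective $\mu_*\mu^*\omega=\omega$, whence $T\geq\delta c\,\omega$ on $X$ and $T\in[\alpha]$. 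Now apply Demailly regularization \cite{Dem92} to $T$: one obtains K\"ahler currents $T_j\in[\alpha]$ with analytic singularities, with $T_j\geq\frac{\delta c}{2}\omega$ for $j$ large, and with singular set contained in a super-level set of the Lelong numbers of $T$, hence disjoint from a neighbourhood of $\mu(x)$ where $T$ is smooth. Thus $\mu(x)\notin E_+(T_j)\supseteq E_{nK}(\alpha)$, which is exactly the conclusion you were after; no auxiliary current $S$, no gluing, and no delicate choice of $c$ and $V$ are needed.
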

The following result is the analog for the null locus:
\begin{lem}\label{pull2}
Let $\mu:\ti{X}\to X$ be a modification between compact K\"ahler manifolds, which is a composition of blowups with smooth centers. If $[\alpha]$ is any $(1,1)$ class on $X$ then
$$\Null(\mu^*\alpha)=\mu^{-1}(\Null(\alpha))\cup\mathrm{Exc}(\mu).$$
\end{lem}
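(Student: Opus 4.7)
The plan is to prove both inclusions. For $\Null(\mu^*\alpha)\subset\mu^{-1}(\Null(\alpha))\cup\mathrm{Exc}(\mu)$, let $W\subset\ti{X}$ be an irreducible subvariety with $\langle(\mu^*\alpha)^{\dim W}\rangle_{\ti{X}|W}=0$. Either $W\subset\mathrm{Exc}(\mu)$ and we are done, or the generic point of $W$ lies in the open locus where $\mu$ is a biholomorphism, so $V:=\mu(W)$ is irreducible of the same dimension as $W$ and not contained in $\mu(\mathrm{Exc}(\mu))$, and $W$ is the proper transform of $V$. Lemma \ref{invar} then yields $\langle\alpha^{\dim V}\rangle_{X|V}=\langle(\mu^*\alpha)^{\dim W}\rangle_{\ti{X}|W}=0$, so $V\subset\Null(\alpha)$ and $W\subset\mu^{-1}(V)\subset\mu^{-1}(\Null(\alpha))$.

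For the reverse inclusion, the key reduction is that once we know $\mathrm{Exc}(\mu)\subset\Null(\mu^*\alpha)$, Lemma \ref{invar} handles everything else: given irreducible $V\subset\Null(\alpha)$, if $V\not\subset\mu(\mathrm{Exc}(\mu))$ then the proper transform $\ti{V}$ lies in $\Null(\mu^*\alpha)$ by Lemma \ref{invar} while every other component of $\mu^{-1}(V)$ lies in $\mathrm{Exc}(\mu)$, and if $V\subset\mu(\mathrm{Exc}(\mu))$ then $\mu^{-1}(V)\subset\mathrm{Exc}(\mu)$ entirely. It therefore suffices to show that every irreducible component $E$ of $\mathrm{Exc}(\mu)$, which is a divisor with $Z:=\mu(E)$ of codimension $\geq 2$ in $X$ (so $\dim Z<\dim E$), satisfies $\langle(\mu^*\alpha)^{\dim E}\rangle_{\ti{X}|E}=0$. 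If $Z\subset E_{nn}(\alpha)$, Lemma \ref{nonnef} gives $E\subset E_{nn}(\mu^*\alpha)$ and the conclusion is immediate by definition; so we may assume $Z\not\subset E_{nn}(\alpha)$.

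The plan for this last case is to combine the BEGZ mass inequality with a volume vanishing for pullback classes. Take an embedded resolution $\pi:\hat{X}\to\ti{X}$ so that the proper transform $\hat{E}$ is smooth (and hence K\"ahler). For any $\ti{T}\in[\mu^*\alpha]$ with $\ti{T}\geq-\ve\ti{\omega}$ and analytic singularities not containing $E$, the current $\pi^*(\ti{T}+\ve\ti{\omega})|_{\hat{E}}$ is closed positive on $\hat{E}$ in the class $\pi^*[\mu^*\alpha+\ve\ti{\omega}]|_{\hat{E}}$; arguing as in the proof of Lemma \ref{finite} and invoking \cite[Theorem 1.16]{BEGZ}, together with the coincidence of absolutely continuous and non-pluripolar Monge-Amp\`ere products for analytic singularities, we obtain
$$\int_{E_{reg}}((\ti{T}+\ve\ti{\omega})|_{E_{reg}})_{ac}^{\dim E}\leq\vol_{\hat{E}}\bigl(\pi^*[\mu^*\alpha+\ve\ti{\omega}]|_{\hat{E}}\bigr).$$
Taking the supremum over $\ti{T}$, letting $\ve\downarrow 0$, and using the continuity of the volume function reduces everything to showing that $\vol_{\hat{E}}(\pi^*[\mu^*\alpha]|_{\hat{E}})=0$.

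This last volume vanishing is the main obstacle. The composition $q:=\mu\circ\pi|_{\hat{E}}:\hat{E}\to Z\subset X$ has image of dimension strictly less than $\dim\hat{E}$, so by generic smoothness a fiber $F=q^{-1}(z)$ over a general smooth point $z\in Z$ is a smooth positive-dimensional compact K\"ahler submanifold of $\hat{E}$. If the class $\pi^*[\mu^*\alpha]|_{\hat{E}}=q^*[\alpha]$ were big on $\hat{E}$, it would contain a K\"ahler current $S\geq c\,\omega_{\hat{E}}$ which, after Demailly regularization, we may take to have analytic singularities not containing a generic fiber $F$. Restricting to such $F$ yields a closed positive current whose smooth part satisfies $S|_F\geq c\,\omega_{\hat{E}}|_F>0$ on a nonempty open set, so $S|_F\neq 0$; but its cohomology class $[S|_F]=(q|_F)^*[\alpha]=0$ vanishes because $q|_F$ is constant, and the $\partial\overline{\partial}$-lemma on the compact K\"ahler manifold $F$ then forces $S|_F=0$, a contradiction. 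Hence $\pi^*[\mu^*\alpha]|_{\hat{E}}$ is not big on $\hat{E}$, so its volume is zero, finishing the proof.
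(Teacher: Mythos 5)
Your argument is correct, and its outer structure (both inclusions handled via Lemma \ref{invar} away from the exceptional locus, with the crux being $\mathrm{Exc}(\mu)\subset\Null(\mu^*\alpha)$) is the same as the paper's; but your treatment of that crux takes a genuinely different route. For a point $x\in\mathrm{Exc}(\mu)$ the paper takes as witness an irreducible component $E$ of the \emph{fiber} $\mu^{-1}(\mu(x))$ through $x$: after an embedded resolution $E'$ of $E$ the class $\mu'^*\mu^*\alpha|_{E'}$ is literally zero, since $E'$ maps to a point, so the trivial bound $\langle(\mu'^*\mu^*\alpha)^{\dim E}\rangle_{X'|E'}\le\vol_{E'}(\mu'^*\mu^*\alpha|_{E'})$ finishes at once. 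You instead take the full exceptional divisor components $E$, whose images $Z=\mu(E)$ are in general positive-dimensional, so the restricted class $q^*[\alpha]$ on the smooth model $\hat{E}$ is not zero, and you must supply the additional lemma that a class pulled back under a holomorphic map with lower-dimensional image is never big (your K\"ahler-current-restricted-to-a-generic-smooth-fiber argument, which is sound). Both proofs rest on the same ``restricted volume $\le$ volume of the restriction on a smooth model'' bound, justified as in the paper via Lemma \ref{finite}, \cite[Theorem 1.16]{BEGZ} and continuity of the volume; the paper's choice of fiber components makes the volume vanishing free, while your choice requires the non-bigness argument but exhibits finitely many explicit divisorial members of $\Null(\mu^*\alpha)$ and disposes of the case $Z\subset E_{nn}(\alpha)$ for free via Lemma \ref{nonnef}. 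Two cosmetic points: as in the paper you should first reduce to $[\alpha]$ big (otherwise both null loci are all of the respective manifolds, and Lemmas \ref{invar} and \ref{nonnef} are stated only for big classes), and in the reverse inclusion the varieties $V$ should range over the members of the union defining $\Null(\alpha)$, i.e.\ those with $\langle\alpha^{\dim V}\rangle_{X|V}=0$, rather than arbitrary subvarieties contained in $\Null(\alpha)$.
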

\begin{proof}
Since $[\alpha]$ is big if and only if $[\mu^*\alpha]$ is big \cite[Proposition 4.12]{Bo}, the statement is only nontrivial when $[\alpha]$ is big, which we assume.
First we show that $\Null(\mu^*\alpha)\subset\mu^{-1}(\Null(\alpha))\cup\mathrm{Exc}(\mu)$. Let $x\not\in \mu^{-1}(\Null(\alpha))\cup\mathrm{Exc}(\mu)$ and let $\ti{V}$ be any irreducible subvariety of $\ti{X}$ through $x$. Then $V:=\mu(\ti{V})$ is an irreducible subvariety of $X$, whose proper transform equals $\ti{V}$, and $\langle\alpha^{\dim V}\rangle_{X|V}>0$. Lemma \ref{invar} gives $\langle(\mu^*\alpha)^{\dim V}\rangle_{\ti{X}|\ti{V}}>0$. Since $\ti{V}$ was arbitrary, we conclude that $x\not\in \Null(\mu^*\alpha)$.

Conversely, we show that $\mu^{-1}(\Null(\alpha))\cup\mathrm{Exc}(\mu)\subset\Null(\mu^*\alpha)$. First, let us see that $\mathrm{Exc}(\mu)\subset\Null(\mu^*\alpha)$. If $x\in \mathrm{Exc}(\mu),$ Zariski's main theorem
implies that the fiber $\mu^{-1}(\mu(x))$ is connected and positive dimensional. Let $E$ be an irreducible component of $\mu^{-1}(\mu(x))$, and let $\mu':X'\to\ti{X}$ be an embedded resolution of singularities of $E$, so that its proper transform
$E'\subset X'$ is smooth and connected and $\mu'|_{E'}:E'\to E$ is bimeromorphic. Then Lemma \ref{invar} implies that $\langle(\mu'^*\mu^*\alpha)^{\dim E}\rangle_{X'|E'}=\langle(\mu^*\alpha)^{\dim E}\rangle_{\ti{X}|E}$. We also have the trivial inequality
$$\langle(\mu'^*\mu^*\alpha)^{\dim E}\rangle_{X'|E'}\leq \vol_{E'}(\mu'^*\mu^*\alpha|_{E'}),$$
but the class $\mu'^*\mu^*\alpha|_{E'}$ is zero since $E'$ is contained in a fiber of $\mu\circ\mu',$ and so
$$\langle(\mu'^*\mu^*\alpha)^{\dim E}\rangle_{X'|E'}=0=\langle(\mu^*\alpha)^{\dim E}\rangle_{\ti{X}|E},$$ and we conclude that $x\in \Null(\mu^*\alpha)$.

Let now $x\in \mu^{-1}(\Null(\alpha))\backslash\mathrm{Exc}(\mu)$. By definition $\mu$ is an isomorphism near $x$, and there is an irreducible subvariety $V$ of $X$ through $\mu(x)$ with $\langle\alpha^{\dim V}\rangle_{X|V}=0$. If $\ti{V}$ denotes the proper transform of $V$, which passes through $x$, then Lemma \ref{invar} gives $\langle(\mu^*\alpha)^{\dim V}\rangle_{\ti{X}|\ti{V}}=\langle\alpha^{\dim V}\rangle_{X|V}=0$, and so $x\in \Null(\mu^*\alpha)$.
\end{proof}

\section{Fujita approximation}\label{sect5}
To establish a Fujita-type approximation result for the numerical restricted volume, the following lemma is the key:
\begin{lem}\label{simple}
Let $(X,\omega)$ be a compact K\"ahler manifold and $[\alpha]$ a pseudoeffective class. Then for every $\ve>0$ small there exists
a closed $(1,1)$ current $T_\ve\in[\alpha]$, with $T_\ve\geq -\ve\omega$, with analytic singularities contained in $E_{\rm nn}(\alpha)$, and such that
for all irreducible subvarieties with $V\not\subset E_{\rm nn}(\alpha)$ we have
\begin{equation}\label{simp1}
\int_{V_{\rm reg}} (T_\ve|_{V_{\rm reg}}+\ve\omega)_{\rm ac}^{\dim V}\geq \langle\alpha^{\dim V}\rangle_{X|V},
\end{equation}
and
\begin{equation}\label{simp2}
\lim_{\ve\downarrow 0}\int_{V_{\rm reg}} (T_\ve|_{V_{\rm reg}}+\ve\omega)_{\rm ac}^{\dim V}=\langle\alpha^{\dim V}\rangle_{X|V}.
\end{equation}
\end{lem}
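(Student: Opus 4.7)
The plan is to construct a single $T_\ve$ by applying Demailly's regularization to a positive current with minimal singularities in the slightly more positive class $[\alpha+\tfrac{\ve}{2}\omega]$, and then verify the two bounds using monotonicity of the non-pluripolar Monge-Amp\`ere under less/more singular currents. Concretely, let $T_{min,\ve/2}\in[\alpha+\tfrac{\ve}{2}\omega]$ be a positive current with minimal singularities and $\phi_\ve$ a potential for it, so $\alpha+\tfrac{\ve}{2}\omega+\ddbar\phi_\ve\geq 0$, and let $\{\phi_\ve^{(m)}\}$ be a Demailly regularization \cite{Dem92}: each $\phi_\ve^{(m)}$ is quasi-psh with analytic singularities, $\phi_\ve^{(m)}\downarrow\phi_\ve$ pointwise (in particular $\phi_\ve^{(m)}\geq\phi_\ve$), one has $E_+(\phi_\ve^{(m)})\subset E_+(\phi_\ve)=E_+(T_{min,\ve/2})$, and $\alpha+\tfrac{\ve}{2}\omega+\ddbar\phi_\ve^{(m)}\geq-(C_0/m)\omega$ for a constant $C_0=C_0(X,\omega)$. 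Choose $m=m(\ve)$ so that $C_0/m\leq\ve/2$ and set $T_\ve:=\alpha+\ddbar\phi_\ve^{(m)}\in[\alpha]$. Then $T_\ve\geq-\ve\omega$, and its analytic singular locus is contained in $E_+(T_{min,\ve/2})\subset E_{nn}(\alpha)$, where the last inclusion uses that the minimal multiplicity $\nu(\alpha,\cdot)=\lim_{\delta\downarrow 0}\nu(T_{min,\delta},\cdot)$ pointwise dominates $\nu(T_{min,\ve/2},\cdot)$. In particular $T_\ve$ competes in the supremum defining $\langle\alpha^{\dim V}\rangle_{X|V}$ for \emph{every} $V\not\subset E_{nn}(\alpha)$ simultaneously.

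For the lower bound \eqref{simp1}, fix such a $V$ with $k=\dim V$ and view both $T_\ve+\ve\omega=\alpha+\ve\omega+\ddbar\phi_\ve^{(m)}$ and $T_{min,\ve/2}+\tfrac{\ve}{2}\omega=\alpha+\ve\omega+\ddbar\phi_\ve$ as positive currents in $[\alpha+\ve\omega]$, with potentials $\phi_\ve^{(m)}$ and $\phi_\ve$ respectively. Since $\phi_\ve^{(m)}\geq\phi_\ve$, the former is less singular than the latter, and \cite[Theorem 1.16]{BEGZ}, applied on an embedded resolution of $V$ exactly as in the proof of Lemma \ref{l4}, gives
\[
\int_{V_{reg}}\langle(T_\ve+\ve\omega)|_{V_{reg}}^k\rangle\geq\int_{V_{reg}}\langle(T_{min,\ve/2}+\tfrac{\ve}{2}\omega)|_{V_{reg}}^k\rangle\geq\int_{V_{reg}}\langle T_{min,\ve/2}|_{V_{reg}}^k\rangle,
\]
the second inequality coming from expanding the non-pluripolar product of $T_{min,\ve/2}$ with the smooth positive form $\tfrac{\ve}{2}\omega$ and discarding the nonnegative cross terms. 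A further application of \cite[Theorem 1.16]{BEGZ} (comparing $T_{min,\delta_1}+(\delta_2-\delta_1)\omega$ with $T_{min,\delta_2}$ inside $[\alpha+\delta_2\omega]$ for $\delta_1<\delta_2$) shows that $\delta\mapsto\int_{V_{reg}}\langle T_{min,\delta}|_{V_{reg}}^k\rangle$ is increasing in $\delta$, hence by \eqref{nonp} bounded below by its infimum $\langle\alpha^k\rangle_{X|V}$. Combined with the fact that $T_\ve+\ve\omega$ has analytic singularities not containing $V$, so that its non-pluripolar Monge-Amp\`ere on $V$ coincides with the absolutely continuous one, this proves \eqref{simp1}.

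The convergence \eqref{simp2} is then immediate: since $T_\ve$ is admissible in the supremum from the definition of $\langle\alpha^k\rangle_{X|V}$, one has
\[
\int_{V_{reg}}(T_\ve+\ve\omega)_{ac}^k\leq\sup_T\int_{V_{reg}}((T+\ve\omega)|_{V_{reg}})_{ac}^k,
\]
and the right-hand side decreases to $\langle\alpha^k\rangle_{X|V}$ as $\ve\downarrow 0$ by definition, so squeezing with \eqref{simp1} delivers the desired limit. The main obstacle is really the uniformity in $V$: one must produce \emph{one} current $T_\ve$ whose analytic singular locus is $V$-independent and sits inside $E_{nn}(\alpha)$ (so that it is automatically a competitor for every $V\not\subset E_{nn}(\alpha)$), while still carrying enough Monge-Amp\`ere mass on $V$ to dominate $\langle\alpha^k\rangle_{X|V}$ from below. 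Demailly's procedure is tailor-made for this, because its analytic singular locus is forced into $E_+(\phi_\ve)$, which is $V$-independent and contained in $E_{nn}(\alpha)$, while the $\ve/2$ of slack between the perturbations $\tfrac{\ve}{2}\omega$ and $\ve\omega$ absorbs the loss of positivity coming from regularization.
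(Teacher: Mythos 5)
Your construction of $T_\ve$ (Demailly regularization, keeping analytic singularities, of $T_{min,\ve/2}-\tfrac{\ve}{2}\omega$, with loss of positivity absorbed by the extra $\tfrac{\ve}{2}\omega$ of slack) is exactly the paper's, as is the squeeze argument for \eqref{simp2} and the use of \cite[Theorem 1.16]{BEGZ} monotonicity for \eqref{simp1}; in substance this is the same proof. The one step you should rephrase is the appeal to \eqref{nonp} to get $\int_{V_{reg}}\langle T_{min,\ve/2}|_{V_{reg}}^k\rangle\geq\langle\alpha^k\rangle_{X|V}$: within the paper, \eqref{nonp} is only asserted in Section \ref{sect2} with its proof deferred to Remark \ref{nonpl}, which in turn says it follows by ``a very similar argument'' to this very lemma, so quoting it here is circular in the paper's logical order. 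The fix is one line and uses nothing you don't already have: for any competitor $S\in[\alpha]$ with analytic singularities not containing $V$ and $S\geq-\tfrac{\ve}{2}\omega$, the positive current $S+\tfrac{\ve}{2}\omega\in[\alpha+\tfrac{\ve}{2}\omega]$ is more singular than $T_{min,\ve/2}$, hence (by the same BEGZ comparison on a resolution of $V$, together with $\phi_\ve^{(m)}\geq\phi_\ve$) more singular than $T_\ve+\tfrac{\ve}{2}\omega$, so
\begin{equation*}
\int_{V_{reg}}\bigl((S+\tfrac{\ve}{2}\omega)|_{V_{reg}}\bigr)_{ac}^{k}\leq\int_{V_{reg}}\bigl((T_\ve+\ve\omega)|_{V_{reg}}\bigr)_{ac}^{k};
\end{equation*}
taking the supremum over such $S$ and using that the supremum in \eqref{voldef} is increasing in $\ve$ (so its value at $\ve/2$ already dominates the limit $\langle\alpha^k\rangle_{X|V}$) gives \eqref{simp1} directly, which is precisely how the paper argues. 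With that substitution your proof is complete and coincides with the paper's; your detour through the non-pluripolar mass of $T_{min,\ve/2}$ is fine mathematically, but it is really a proof of (half of) \eqref{nonp} rather than a permissible citation of it.
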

\begin{proof}
Let $V$ be any such subvariety, with $k=\dim V>0$.
Recalling \eqref{moron}, we see that $V\not\subset E_{\rm nK}(\alpha+\ve\omega),$ for all $\ve>0$, and so if $T_{min,\ve}$ is any positive current with minimal singularities in the class $[\alpha+\ve\omega]$ then $T_{min,\ve}$ has locally bounded potentials on a Zariski open subset of $X$ which contains the complement of $E_{\rm nn}(\alpha)$.
Applying Demailly's regularization \cite{Dem92} to $T_{min,\frac{\ve}{2}}-\frac{\ve}{2}\omega$ we obtain currents $T_\ve\geq -\ve\omega$ in the class $[\alpha]$ with analytic singularities contained in $E_{\rm nn}(\alpha)$, and so that $T_\ve$ is less singular than $T_{min,\frac{\ve}{2}}$ (since the potentials along Demailly's regularization procedure decrease to the original current), and $T_\ve$ can thus be restricted to $V$.

If $S_\ve$ is any current in $[\alpha]$ with analytic singularities not containing $V$ and with $S_\ve\geq -\frac{\ve}{2}\omega$, then
$S_\ve+\ve\omega$ is more singular than $T_\ve+\ve\omega$, and these two positive currents are cohomologous and have analytic singularities, and so
$$\int_{V_{\rm reg}} \left(S_\ve|_{V_{\rm reg}}+\frac{\ve}{2}\omega\right)_{\rm ac}^k\leq \int_{V_{\rm reg}} (S_\ve|_{V_{\rm reg}}+\ve\omega)_{\rm ac}^k\leq \int_{V_{\rm reg}} (T_\ve|_{V_{\rm reg}}+\ve\omega)_{\rm ac}^k,$$
thanks to \cite[Theorem 1.16]{BEGZ}. We note here that their result is for $V$ smooth, but these integrals don't change if we pass to a resolution of singularities, and that their result
is stated for the nonpluripolar product (instead of the Monge-Amp\`ere of the absolutely continuous part), but these agree for currents with analytic singularities.
We conclude that
$$\langle\alpha^{\dim V}\rangle_{X|V}\leq\sup_{S_\ve}\int_{V_{\rm reg}} \left(S_\ve|_{V_{\rm reg}}+\frac{\ve}{2}\omega\right)_{\rm ac}^k\leq \int_{V_{\rm reg}} (T_\ve|_{V_{\rm reg}}+\ve\omega)_{\rm ac}^k,$$
where the supremum is over all currents $S_\ve$ as before (with $\ve>0$ fixed), which proves \eqref{simp1}.

To prove \eqref{simp2}, we just note that by definition we have
$$\sup_{S_\ve} \int_{V_{\rm reg}} (S_\ve|_{V_{\rm reg}}+\ve\omega)_{\rm ac}^k\leq \langle\alpha^{\dim V}\rangle_{X|V}+\psi_V(\ve),$$
with $\psi_V(\ve)\to 0$ as $\ve\to 0$, where the supremum is over all currents $S_\ve\in[\alpha]$ with $S_\ve\geq -\ve\omega$ and with analytic singularities not containing $V$. Since the currents $T_\ve$ above are included in this supremum, we conclude that
$$\int_{V_{\rm reg}}(T_\ve|_{V_{\rm reg}}+\ve\omega)_{\rm ac}^k\leq \langle\alpha^{\dim V}\rangle_{X|V}+\psi_V(\ve),$$
and we are done.
\end{proof}
\begin{rem}\label{nonpl}
A very similar argument, using again \cite[Theorem 1.16]{BEGZ}, and using non-pluripolar products, can easily be used to prove \eqref{nonp}.
\end{rem}

Using this lemma we can prove the following Fujita-type approximation result for the numerical restricted volume (cf. \cite[Theorem 4.8]{Ma} in a more restrictive setting). In the algebraic setting, Fujita approximation results for the restricted volume were obtained independently in \cite{ELMNP,Ta}.

\begin{thm}\label{fujita} Let $(X,\omega)$ be a compact K\"ahler, and $[\alpha]$ a pseudoeffective class.
Then for every $\ve>0$ there is a  modification
$\mu_\ve:X_\ve\to X$, which is an isomorphism outside $E_{\rm nn}(\alpha)$ such that
$\mu_\ve^*(\alpha+\ve\omega)=A_\ve+E_\ve$ with $A_\ve$ a semipositive class and $E_\ve$ an effective $\mathbb{R}$-divisor, such that for every irreducible subvariety $V\not\subset E_{\rm nn}(\alpha)$ we have that the support of $E_\ve$ does not contain the proper transform $V_\ve$ of $V$, and
$$\int_{V_\ve}A_\ve^{\dim V}-\psi_V(\ve)\leq \langle\alpha^{\dim V}\rangle_{X|V}\leq \int_{V_\ve}A_\ve^{\dim V},$$
where $\psi_V(\ve)\to 0$ as $\ve\to 0$.
\end{thm}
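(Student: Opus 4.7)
The strategy is to combine Lemma \ref{simple} with a principalization of the analytic singularity ideal of the current it produces, so that the pullback of $T_\varepsilon+\varepsilon\omega$ splits as a smooth semipositive form plus an effective $\mathbb{R}$-divisor whose support lies over $E_{nn}(\alpha)$ and hence misses every proper transform of interest.

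First I would apply Lemma \ref{simple} to obtain, for each small $\varepsilon>0$, a current $T_\varepsilon\in[\alpha]$ with $T_\varepsilon\geq -\varepsilon\omega$ and analytic singularities contained in $E_{nn}(\alpha)$, whose integrals $\int_{V_{reg}}((T_\varepsilon+\varepsilon\omega)|_{V_{reg}})_{ac}^{\dim V}$ satisfy simultaneously, for \emph{every} subvariety $V\not\subset E_{nn}(\alpha)$, the lower bound \eqref{simp1} and the asymptotic equality \eqref{simp2}. Then $S_\varepsilon:=T_\varepsilon+\varepsilon\omega\geq 0$ is a closed positive current in $[\alpha+\varepsilon\omega]$ with analytic singularities along a coherent ideal sheaf $\mathcal{I}_\varepsilon$ whose zero locus lies in $E_{nn}(\alpha)$. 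I would then take $\mu_\varepsilon:X_\varepsilon\to X$ to be a principalization of $\mathcal{I}_\varepsilon$, realized as a composition of blowups with smooth centers contained in the zero locus of $\mathcal{I}_\varepsilon$; in particular $\mu_\varepsilon$ is an isomorphism outside $E_{nn}(\alpha)$. Writing $S_\varepsilon=\beta+\ddbar\varphi$ locally with $\beta$ smooth and $\varphi=c\log\sum|f_i|^2+\psi$ ($\psi$ smooth), one obtains a global decomposition
$$\mu_\varepsilon^*S_\varepsilon=A_\varepsilon+[E_\varepsilon],$$
with $E_\varepsilon$ an effective $\mathbb{R}$-divisor supported on $\mu_\varepsilon^{-1}(E_{nn}(\alpha))$ and $A_\varepsilon$ a smooth closed $(1,1)$ form; since $A_\varepsilon=\mu_\varepsilon^*S_\varepsilon\geq 0$ on the complement of that support, semipositivity of $A_\varepsilon$ extends by continuity. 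Cohomologically, $\mu_\varepsilon^*(\alpha+\varepsilon\omega)=[A_\varepsilon]+[E_\varepsilon]$.

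To verify the integral estimate, fix $V\not\subset E_{nn}(\alpha)$; then its proper transform $V_\varepsilon$ is well-defined, is not contained in $\mathrm{supp}(E_\varepsilon)\subset\mu_\varepsilon^{-1}(E_{nn}(\alpha))$, and $\mu_\varepsilon|_{V_\varepsilon}:V_\varepsilon\to V$ is bimeromorphic. On the complement of $\mathrm{supp}(E_\varepsilon)$ we have $A_\varepsilon=\mu_\varepsilon^*S_\varepsilon$, and the change-of-variables used in the proof of Lemma \ref{invar} gives
$$\int_{V_\varepsilon}A_\varepsilon^{\dim V}=\int_{V_{reg}}((T_\varepsilon+\varepsilon\omega)|_{V_{reg}})_{ac}^{\dim V}.$$
Combined with \eqref{simp1} and \eqref{simp2}, this yields $\langle\alpha^{\dim V}\rangle_{X|V}\leq\int_{V_\varepsilon}A_\varepsilon^{\dim V}\leq\langle\alpha^{\dim V}\rangle_{X|V}+\psi_V(\varepsilon)$ with $\psi_V(\varepsilon)\to 0$, as required. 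The main potential obstacle is the uniformity of the choice of $T_\varepsilon$ over all such $V$, but this is precisely what Lemma \ref{simple} supplies; a minor secondary point is that $V_\varepsilon$ may still be singular, which is harmless because $A_\varepsilon$ is smooth and one may pass to a further resolution of $V_\varepsilon$ without altering the integrals.
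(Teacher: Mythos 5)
Your proposal is correct and follows essentially the same route as the paper: Lemma \ref{simple} for the uniform family of currents $T_\ve$, principalization of the singularity ideal to write $\mu_\ve^*(T_\ve+\ve\omega)=\theta_\ve+[E_\ve]$ with $\theta_\ve\geq 0$ smooth, the change-of-variables identity $\int_{V_\ve}A_\ve^{\dim V}=\int_{V_{reg}}((T_\ve+\ve\omega)|_{V_{reg}})_{ac}^{\dim V}$, and then \eqref{simp1}--\eqref{simp2} for the two inequalities. The only cosmetic difference is that the paper additionally composes with an embedded resolution of $V_\ve$, which, as you note, is not needed since $A_\ve$ is smooth.
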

By subtracting from $A_\ve$ a small multiple of the exceptional divisors of $\mu_\ve$, and adding this to $E_\ve$ one can also achieve that the class $A_\ve$ is K\"ahler, but we will not need this.
\begin{proof}
Given $\ve>0$ small, use Lemma \ref{simple} and obtain $T_\ve\in\alpha$ a closed $(1,1)$ current with analytic singularities contained in $E_{\rm nn}(\alpha)$, with $T_\ve\geq -\ve\omega$, and with
$$\langle\alpha^{\dim V}\rangle_{X|V}\leq \int_{V_{\rm reg}}(T_\ve|_{V_{\rm reg}}+\ve\omega)_{\rm ac}^{\dim V},$$
for all such $V$.
Let $\mu_\ve:X_\ve\to X$ be a principalization of the ideal sheaf of the singularities of $T_\ve$, followed by an embedded resolution of the singularities of the proper transform $V_\ve$ of $V$, which we therefore assume is smooth.
Then $\mu_\ve^*T_\ve$ has analytic singularities along an effective $\mathbb{R}$-divisor $E_\ve$ not containing $V_\ve$ and the Siu decomposition of $\mu_\ve^*(T_\ve+\ve\omega)$ is
$$\mu_\ve^*(T_\ve+\ve\omega)=\theta_\ve+[E_\ve],$$
where $\theta_\ve$ is a smooth closed form, which satisfies $\theta_\ve\geq 0$. Denote $A_\ve=[\theta_\ve]$. Since $\mu_\ve$ is an isomorphism at the generic point of $V$, as in Lemma \ref{finite} we see that
$$\int_{V_{\rm reg}}(T_\ve|_{V_{\rm reg}}+\ve\omega)_{\rm ac}^k=\int_{V_\ve}(\mu_\ve^*(T_\ve+\ve\omega)|_{V_\ve})_{\rm ac}^k,$$
where we set $k=\dim V$, and also
$$\int_{V_\ve}(\mu_\ve^*(T_\ve+\ve\omega)|_{V_\ve})_{\rm ac}^k=\int_{V_\ve}((\theta_\ve+[E_\ve])|_{V_\ve})_{\rm ac}^k=\int_{V_\ve}\theta_\ve^k=\int_{V_\ve}A_\ve^k.$$
Hence we conclude that
$$\langle\alpha^{\dim V}\rangle_{X|V}\leq \int_{V_\ve}A_\ve^k,$$
which is half of what we want.
The other half follows as in the proof of \eqref{simp2}, which gives
$$\int_{V_\ve}A_\ve^k=\int_{V_{\rm reg}}(T_\ve|_{V_{\rm reg}}+\ve\omega)_{\rm ac}^k\leq \langle\alpha^{\dim V}\rangle_{X|V}+\psi_V(\ve),$$
and we are done.
\end{proof}

This in turn implies the following log concavity result (cf. \cite{ELMNP,Ma}):
\begin{thm}\label{concave}
If $\alpha_1,\alpha_2$ are two pseudoeffective classes on $X$ with $V$ an irreducible $k$-dimensional suvbariety not contained in $E_{\rm nn}(\alpha_1)\cup E_{\rm nn}(\alpha_2)$, then
$$\langle(\alpha_1+\alpha_2)^k\rangle_{X|V}^{\frac{1}{k}}\geq \langle \alpha_1^k\rangle_{X|V}^{\frac{1}{k}}+\langle \alpha_2^k\rangle_{X|V}^{\frac{1}{k}}.$$
\end{thm}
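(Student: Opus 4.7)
The plan is to reduce to the case of nef classes via the Fujita-type approximation of Theorem \ref{fujita}, and then to invoke the classical Brunn--Minkowski inequality for top self-intersections of nef classes. First, I would apply Theorem \ref{fujita} separately to $\alpha_1$ and $\alpha_2$: for each $\ve>0$ small there exist modifications $\mu_{i,\ve}:X_{i,\ve}\to X$ and decompositions $\mu_{i,\ve}^*(\alpha_i+\ve\omega)=A_{i,\ve}+E_{i,\ve}$, with $A_{i,\ve}$ a semipositive class and $E_{i,\ve}$ an effective $\mathbb{R}$-divisor whose support does not contain the proper transform $V_{i,\ve}$ of $V$, and with $\int_{V_{i,\ve}}A_{i,\ve}^k\to\langle\alpha_i^k\rangle_{X|V}$ as $\ve\to 0$.

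To combine the two approximations, I would then pass to a common modification $\mu_\ve:\ti{X}_\ve\to X$ dominating both $\mu_{1,\ve}$ and $\mu_{2,\ve}$ (for instance a desingularization of the graph closure), further composed with an embedded resolution of the proper transform $\ti{V}$ of $V$, so that $\ti{V}$ is smooth. Pulling back preserves both semipositive representatives and effectivity of divisors, so on $\ti{X}_\ve$ we obtain $\mu_\ve^*(\alpha_i+\ve\omega)=A_i+E_i$, with $A_i$ still semipositive and $E_i$ effective with support not containing $\ti{V}$, and the change of variables along the induced modification $\ti{V}\to V_{i,\ve}$ yields $\int_{\ti{V}}A_i^k=\int_{V_{i,\ve}}A_{i,\ve}^k$. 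Adding gives $\mu_\ve^*(\alpha_1+\alpha_2+2\ve\omega)=(A_1+A_2)+(E_1+E_2)$, and if $\theta_i\ge 0$ are smooth closed representatives of $A_i$, then $\theta_1+\theta_2+[E_1+E_2]$ is a closed positive current on $\ti{X}_\ve$ in the pulled-back class, with analytic singularities along $E_1+E_2$, which does not contain $\ti{V}$. The definition of the numerical restricted volume combined with the bimeromorphic invariance from Lemma \ref{invar} then gives
$$\langle(\alpha_1+\alpha_2+2\ve\omega)^k\rangle_{X|V}\;\ge\; \int_{\ti{V}}(A_1+A_2)^k.$$

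Since $A_1,A_2$ and $A_1+A_2$ are nef on the smooth compact K\"ahler $k$-fold $\ti{V}$, the classical Brunn--Minkowski inequality for volumes of nef classes (a consequence of the Khovanskii--Teissier inequalities, which are valid on K\"ahler manifolds) yields
$$\left(\int_{\ti{V}}(A_1+A_2)^k\right)^{\!1/k}\;\ge\; \left(\int_{\ti{V}}A_1^k\right)^{\!1/k}+\left(\int_{\ti{V}}A_2^k\right)^{\!1/k}.$$
I would then let $\ve\to 0$: by the Fujita approximation bounds, the right-hand side tends to $\langle\alpha_1^k\rangle_{X|V}^{1/k}+\langle\alpha_2^k\rangle_{X|V}^{1/k}$, while subadditivity of minimal multiplicities $\nu(\alpha_1+\alpha_2+2\ve\omega,x)\le\nu(\alpha_1,x)+\nu(\alpha_2,x)$ ensures that $V\not\subset E_{nn}(\alpha_1+\alpha_2+2\ve\omega)$ and $V\not\subset E_{nn}(\alpha_1+\alpha_2)$, so the semicontinuity Lemma \ref{semi} gives $\langle(\alpha_1+\alpha_2)^k\rangle_{X|V}\ge\limsup_{\ve\to 0}\langle(\alpha_1+\alpha_2+2\ve\omega)^k\rangle_{X|V}$, producing the claimed concavity.

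The main obstacle is organizing the two Fujita approximations on a single common bimeromorphic model of $X$ on which the proper transform of $V$ is smooth and avoids the supports of both effective parts; this is the role of the common modification $\ti{X}_\ve$. Once the setup is in place, Brunn--Minkowski together with the standard semicontinuity and approximation statements does the rest.
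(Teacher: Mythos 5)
Your proposal is correct and follows essentially the same route as the paper: apply the Fujita-type approximation of Theorem \ref{fujita} to each $\alpha_i$, pass to a common resolution which is an isomorphism at the generic point of $V$ and on which the proper transform $\ti{V}$ is smooth, apply Brunn--Minkowski (Khovanskii--Teissier) to the semipositive classes on $\ti{V}$, and let $\ve\to 0$. The only cosmetic difference is in the final bookkeeping: the paper bounds $\langle(\alpha_1+\alpha_2)^k\rangle_{X|V}$ directly from the definition using the current $T^1_\ve+T^2_\ve\geq -2\ve\omega$ in $[\alpha_1+\alpha_2]$, whereas you go through the perturbed class $\alpha_1+\alpha_2+2\ve\omega$ and then invoke Lemmas \ref{invar} and \ref{semi}; both are valid.
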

\begin{proof}
Since $V$ is irreducible, $V$ not contained in $E_{\rm nn}(\alpha_1)\cup E_{\rm nn}(\alpha_2)$ is equivalent to $V$ not contained in $E_{\rm nn}(\alpha_1)$ and also not contained in
$E_{\rm nn}(\alpha_2)$.
Note also that
$$E_{\rm nn}(\alpha_1+\alpha_2)\subset E_{\rm nn}(\alpha_1)\cup E_{\rm nn}(\alpha_2).$$
So all the three numerical restricted volumes are defined by \eqref{voldef}.
Fix $\ve>0$, and apply Theorem \ref{fujita} to $[\alpha_1]$ to get a modification $\mu_1:X_1\to X$ and a current $T_\ve^1\in[\alpha_1]$ with analytic singularities not containing $V$, $T_\ve^1\geq -\ve\omega$, with
$\mu_1^*(T^1_\ve+\ve\omega)=\theta_1+[E_1]$ and
$$\left|\int_{V_1}\theta_1^k - \langle \alpha_1^k\rangle_{X|V}\right|\leq \psi(\ve),$$
where $V_1$ is the proper transform of $V$ and $\psi(\ve)\to 0$ as $\ve\to 0$.
We do the same for $[\alpha_2]$ and get $\mu_2:X_2\to X$, and $T^2_\ve$ with
$$\left|\int_{V_2}\theta_2^k - \langle \alpha_2^k\rangle_{X|V}\right|\leq \psi(\ve).$$
We can pass to a common resolution $\mu:\ti{X}\to X$, which is still an isomorphism at the generic point of $V$, and pullback everything upstairs, without changing notation, so that $\mu^*(T^1_\ve+\ve\omega)=\theta_1+[E_1]$, and so on.

Then $T^1_\ve+T^2_\ve$ is a current in $[\alpha_1+\alpha_2]$ with analytic singularities not containing $V$, $T_\ve^1+T_\ve^2\geq -2\ve\omega$, and
$$\mu^*(T^1_\ve+T^2_\ve+2\ve\omega)=\theta_1+\theta_2+[E_1]+[E_2],$$
and
$$\langle(\alpha_1+\alpha_2)^k\rangle_{X|V}\geq \int_{V_{\rm reg}} ((T^1_\ve+T^2_\ve+2\ve\omega)|_{V_{\rm reg}})_{\rm ac}^k-\psi(\ve)=\int_{\ti{V}} (\theta_1+\theta_2)^k-\psi(\ve),$$
for a possibly different function $\psi(\ve)$.
By the usual log concavity property of the volume of a nef class \cite{Bo} we have
$$\left(\int_{\ti{V}} (\theta_1+\theta_2)^k\right)^{\frac{1}{k}}\geq \left(  \int_{\ti{V}}\theta_1^k \right)^{\frac{1}{k}}+\left( \int_{\ti{V}}\theta_2^k\right)^{\frac{1}{k}}.$$
Putting these together and letting $\ve\to 0$ finishes the proof.
\end{proof}

For fixed $V$, the set $\mathcal{A}$ of big classes $\alpha$ with $V\not\subset E_{\rm nK}(\alpha)$ is easily seen to be convex and open (see \cite[Proposition 4.10]{Ma}). As in  \cite[Corollary 4.11]{Ma} we obtain:
\begin{cor}
If $V$ is an irreducible subvariety of $X$, and $\mathcal{A}$ is the open convex cone of $V$-big classes, then the function $\alpha\mapsto \langle\alpha^{\dim V}\rangle_{X|V}$ from $\mathcal{A}$ to $\mathbb{R}$ is continuous.
\end{cor}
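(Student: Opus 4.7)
The plan is to split continuity into upper and lower semicontinuity. Let $k = \dim V$, fix $\alpha \in \mathcal{A}$, and let $\alpha_j \to \alpha$ with $\alpha_j \in \mathcal{A}$ for all $j$. Since all classes in $\mathcal{A}$ are big, and for big classes $E_{nn}(\alpha) \subset E_{nK}(\alpha)$ (any point outside $E_{nK}(\alpha)$ lies outside the singular set of some K\"ahler current with analytic singularities in the class, so has vanishing minimal multiplicity), the assumption $V \not\subset E_{nK}(\alpha_j)$ forces $V \not\subset E_{nn}(\alpha_j)$ for every $j$. Thus Lemma \ref{semi} applies and gives the upper semicontinuity
$$\langle \alpha^k \rangle_{X|V} \geq \limsup_{j \to \infty} \langle \alpha_j^k \rangle_{X|V}.$$

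For the reverse inequality I would exploit Lemma \ref{volr}, which is available because $\alpha \in \mathcal{A}$. Given $\delta > 0$, choose a K\"ahler current $T \in [\alpha]$ with analytic singularities not containing $V$, satisfying $T \geq \eta \omega$ for some $\eta > 0$ and
$$\int_{V_{reg}} (T|_{V_{reg}})_{ac}^k \geq \langle \alpha^k \rangle_{X|V} - \delta.$$
Representing $\alpha_j - \alpha$ by smooth forms $\theta_j$ that converge to $0$ in $C^\infty$ (possible since $H^{1,1}(X,\mathbb{R})$ is finite dimensional), for $j$ large we have $-\tfrac{\eta}{2}\omega \leq \theta_j \leq \tfrac{\eta}{2}\omega$, so $T_j := T + \theta_j$ is a K\"ahler current in $[\alpha_j]$ with $T_j \geq \tfrac{\eta}{2}\omega$ and sharing the analytic singular set of $T$. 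Hence it is admissible in the supremum defining $\langle \alpha_j^k \rangle_{X|V}$ via Lemma \ref{volr}, and so
$$\langle \alpha_j^k \rangle_{X|V} \geq \int_{V_{reg}} (T_j|_{V_{reg}})_{ac}^k.$$

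The remaining step is to show that the right-hand side converges to $\int_{V_{reg}} (T|_{V_{reg}})_{ac}^k$. Since a smooth form does not affect the singular part of a current, one has the pointwise identity $(T_j|_V)_{ac} = (T|_V)_{ac} + \theta_j|_V$ on $V_{reg}$. Expanding the $k$-th wedge power binomially, the difference from $(T|_V)_{ac}^k$ is a sum of mixed terms each containing at least one factor of $\theta_j|_V$. Writing $\theta_j$ as a difference of smooth nonnegative $(1,1)$-forms of $\omega$-sup-norm $O(\|\theta_j\|_{C^0})$, and invoking the integral bounds of Lemma \ref{finite} applied to terms of the form $\int_V (T|_V)_{ac}^i \wedge \omega^{k-i}$, each mixed term is bounded by $C\|\theta_j\|_{C^0} \to 0$. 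This yields
$$\liminf_{j \to \infty} \langle \alpha_j^k \rangle_{X|V} \geq \int_{V_{reg}} (T|_{V_{reg}})_{ac}^k \geq \langle \alpha^k \rangle_{X|V} - \delta,$$
and letting $\delta \to 0$ completes the argument. The one technical point is the uniform error estimate for the perturbed $k$-th wedge power, which is precisely what Lemma \ref{finite} is designed to deliver; the rest is a routine perturbation argument made possible by the K\"ahler-current characterization in Lemma \ref{volr}.
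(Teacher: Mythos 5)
Your argument is correct, but it takes a different route from the paper. The paper's proof is a two-line deduction from concavity: by Theorem \ref{concave} (whose proof goes through the Fujita-type approximation of Theorem \ref{fujita}), the function $\alpha\mapsto\langle\alpha^{\dim V}\rangle_{X|V}^{1/\dim V}$ is concave on $\mathcal{A}$, and since $\mathcal{A}$ is convex and open in the finite-dimensional space $H^{1,1}(X,\mathbb{R})$, concavity forces continuity. You instead prove the two semicontinuities directly: upper semicontinuity is exactly Lemma \ref{semi} (using that $E_{nn}\subset E_{nK}$ so its hypothesis holds on $\mathcal{A}$), and for lower semicontinuity you perturb a near-optimal K\"ahler current $T\geq\eta\omega$ furnished by Lemma \ref{volr} by smooth representatives $\theta_j$ of $[\alpha_j-\alpha]$, noting that $(T+\theta_j)|_{V_{reg}}$ has the same singular part as $T|_{V_{reg}}$ and controlling the mixed terms in the binomial expansion by Lemma \ref{finite}; all of these steps are sound (the positivity $T\geq\eta\omega$ makes $(T|_{V_{reg}})_{ac}^i$ a positive form a.e., so the comparison $|\theta_j|\leq\ve_j\omega$ does give the $O(\ve_j)$ bound on each mixed term). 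What each approach buys: yours is self-contained within the Section \ref{sect2} lemmas, avoids the Fujita/concavity machinery, and is essentially the continuity statement of Boucksom's thesis alluded to right after Lemma \ref{semi}; the paper's route is shorter once Theorem \ref{concave} is in hand and records the additional concavity information, which is of independent interest.
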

\begin{proof}
Indeed the function $\alpha\mapsto \langle\alpha^{\dim V}\rangle_{X|V}^{\frac{1}{k}}$ is concave on $\mathcal{A}$ thanks to Theorem \ref{concave}. Since $\mathcal{A}$ is convex and open,
this function is therefore continuous.
\end{proof}

\begin{ex}\label{exx2}
In general, the function $\alpha\mapsto \langle\alpha^{\dim V}\rangle_{X|V}$ is not continuous on the whole big cone. An example of such a discontinuity for the algebraic restricted volume was given in \cite[Example 5.10]{ELMNP}, and we now check that in this same example the numerical restricted volume is discontinuous as well.
Of course, Proposition \ref{con} says that (assuming Conjecture \ref{con3}) the numerical restricted volume is continuous at certain points on the closure of $\mathcal{A}$ (this closure is of course the closed convex cone of $V$-psef classes), the ones where $V$ is one of the irreducible components of $E_{\rm nK}(\alpha)$, and indeed that its value there is zero.

Following \cite[Example 5.10]{ELMNP} we let $\pi:X\to\mathbb{CP}^3$ be the blowup of $\mathbb{CP}^3$ along a line $\ell$, and let $L=\pi^*\mathcal{O}(1)$ and $V\subset X$ be a smooth curve of bidegree $(2,1)$ inside the exceptional divisor $E\cong\mathbb{CP}^1\times\mathbb{CP}^1$, where $\pi|_E:E\to\ell$ is the projection onto the first factor. It is computed in \cite{ELMNP} that $\vol_{X|V}(L)=1,$ while $\int_Vc_1(L)=2$ and $\vol_{X|V}(L-\frac{1}{m}E)=2+\frac{1}{m}$ for all $m$ large. The $\mathbb{Q}$-divisors $L-\frac{1}{m}E$ are ample for $m$ large, and so Theorem \ref{hisamat} gives $\langle c_1(L-\frac{1}{m}E)\rangle_{X|V}=\vol_{X|V}(L-\frac{1}{m}E)=2+\frac{1}{m}.$ Also, since $L$ is nef, Lemma \ref{l4} gives $\langle c_1(L)\rangle_{X|V}=\int_Vc_1(L)=2.$

On the other hand, if we fix $m$ large so that $H=L-\frac{1}{m}E$ is ample, then we have
$$E=E_{\rm nK}(c_1(L))=\bigcap_{\ve>0} E_{\rm nn}(c_1(L-\ve H))=\bigcap_{\ve>0}E_{\rm nn}\left(c_1\left(L+\frac{\ve}{m(1-\ve)}E\right)\right),$$
and so $E\subset E_{\rm nn}(c_1(L+\ve E))$ for all $\ve>0$. Since $V\subset E$, this gives $\langle c_1(L+\ve E)\rangle_{X|V}=0$ for all $\ve>0$, and so the function
$t\mapsto \langle c_1(L+tE)\rangle_{X|V}$ is discontinuous at $t=0$.
\end{ex}

\section{Restricted volumes and Zariski decompositions}\label{sect6}
Let us briefly recall a few facts about Zariski decompositions on surfaces \cite{Za}. A good reference for all the unproved statements which follow is \cite{BKS}. Let $L$ be a pseudoeffective line bundle on a smooth projective surface $X$. Then $L$ can be written uniquely as $L=P+N$ where $P$ is a nef $\mathbb{Q}$-divisor class, $N=\sum_j a_jD_j, a_j\in\mathbb{Q}_+$ is an effective $\mathbb{Q}$-divisor, $P\cdot N=0$, and either $N=0$ or the matrix $(D_i\cdot D_j)$ is negative definite. Furthermore we have $H^0(X,mL)\cong H^0(X,mP)$ for any $m\geq 0$ such that $mP$ is integral. It follows that $\vol(L)=\vol(P)=P^2$.
We also have
$$\mathbb{B}_+(L)=\mathbb{B}_+(P)=\mathrm{Null}(P),$$
while
$$\mathbb{B}_-(L)=\mathrm{Supp}(N).$$

A substitute for this theory in higher dimensions are divisorial Zariski decompositions, introduced by Boucksom \cite{Bo2} (and Nakayama \cite{Na} in the algebraic case).
Let $X$ be a compact K\"ahler manifold and $[\alpha]$ a big class. The divisorial Zariski decomposition $[\alpha]=P+N$ of $[\alpha]$ is defined by letting
$$N=\sum_D \nu([\alpha],D)[D],$$
where we sum over all prime divisors $D\subset X$, and defining $P=[\alpha]-N$. Here $\nu([\alpha],D)=\inf_{x\in D}\nu(T_{min},x)$, where $T_{min}$ is any positive current with minimal singularities in the class $[\alpha]$.
Boucksom \cite{Bo2} shows that $N$ is an effective $\mathbb{R}$-divisor (possibly zero), and that $P$ is a big class with $\vol(\alpha)=\vol(P)$. By construction $P$ is nef in codimension $1$, which means
that $\nu(P,D)=0$ for all prime divisors $D$ (equivalently, every irreducible component of $E_{\rm nn}(P)$ has codimension at least $2$). Clearly, the support of $N$ coincides with the set of codimension $1$ components of $E_{\rm nn}(\alpha)$.

If it happens that $P$ is nef, then we say that $[\alpha]$ has a Zariski decomposition. These do not always exist (see \cite{Bo2}), but they always exist on K\"ahler surfaces (because in this case nef in codimension $1$ is the same as nef). 

We need a few preparatory lemmas. The first one is a straightforward generalization of \cite[Claim 4.7]{Ma}.

\begin{lem}\label{l1}
If $[\alpha]$ is big and $[\alpha]=P+N$ is its divisorial Zariski decomposition, then we have
$$E_{\rm nK}(\alpha)=E_{\rm nK}(P).$$
\end{lem}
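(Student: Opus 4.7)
The plan is to prove the two inclusions separately by Siu-decomposition manipulations that translate K\"ahler currents between the classes $[\alpha]$ and $[P]$ via addition or subtraction of the effective $\mathbb{R}$-divisor $[N]$.

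For the forward inclusion $E_{nK}(P)\subset E_{nK}(\alpha)$, I would pick any K\"ahler current $T\in[\alpha]$ with analytic singularities and $T\geq\varepsilon\omega$. Since $T$ is more singular than a current with minimal singularities in $[\alpha]$, we have $\nu(T,D)\geq\nu(\alpha,D)$ for every prime divisor $D$. The Siu decomposition $T=R+\sum_D\nu(T,D)[D]$ with $R\geq 0$ therefore yields
\[
T-[N]=R+\sum_D\bigl(\nu(T,D)-\nu(\alpha,D)\bigr)[D]\geq R,
\]
and applying Siu's decomposition to the positive current $T-\varepsilon\omega$ (which has the same divisorial Lelong numbers as $T$) shows $R\geq\varepsilon\omega$. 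Hence $T-[N]$ is itself a K\"ahler current in $[P]$ with analytic singularities satisfying $E_+(T-[N])\subset E_+(T)$. Intersecting over all such $T$ gives the inclusion.

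For the reverse inclusion $E_{nK}(\alpha)\subset E_{nK}(P)$, I would start from any K\"ahler current $S\in[P]$ with analytic singularities; then $S+[N]$ is a K\"ahler current in $[\alpha]$ with analytic singularities, and $E_+(S+[N])=E_+(S)\cup\mathrm{supp}(N)$. Intersecting over $S$ yields $E_{nK}(\alpha)\subset E_{nK}(P)\cup\mathrm{supp}(N)$, so it remains to show $\mathrm{supp}(N)\subset E_{nK}(P)$.

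I expect this last containment to be the main obstacle. For each component $D$ of $N$, the orthogonality inherent in a Zariski decomposition (namely $P^{n-1}\cdot D=0$, which follows from $P\cdot N$-type arguments when $P$ is nef) implies that $\int_D(P|_D)^{\dim D}=0$, so $P|_D$ is nef but not big on $D$; the transcendental Nakamaye theorem of \cite{CT} then forces $D\subset\mathrm{Null}(P)=E_{nK}(P)$. In the general divisorial Zariski setting where $P$ is only nef in codimension $1$, one either passes to a bimeromorphic model where a genuine Zariski decomposition exists (in the spirit of the upcoming Theorem \ref{l7}) or argues directly via currents with minimal singularities in $[P]$ near each $D$, exploiting that $\nu(P,D)=0$ combined with the fact that any further positive perturbation $P-\varepsilon D$ fails to be pseudoeffective by the defining property of the divisorial Zariski decomposition.
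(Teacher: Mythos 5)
Your first inclusion and your reduction are essentially the paper's own argument: the map $T\mapsto T-\mathcal{N}$, with $\mathcal{N}=\sum_D\nu(\alpha,D)[D]$, carries K\"ahler currents with analytic singularities in $[\alpha]$ to ones in $P$ without increasing Lelong numbers, giving $E_{nK}(P)\subset E_{nK}(\alpha)$, and adding $\mathcal{N}$ back gives $E_{nK}(\alpha)\subset E_{nK}(P)\cup\mathrm{supp}(N)$. The genuine gap is exactly where you anticipated it: the containment $\mathrm{supp}(N)\subset E_{nK}(P)$, and none of your proposed routes establishes it. First, the lemma does not assume that $P$ is nef, only that $P$ is the positive part of the divisorial Zariski decomposition (nef in codimension $1$), so the appeal to \cite{CT} via $\int_D (P|_D)^{\dim D}=0$ is not even available as stated. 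Second, even when $P$ is nef, the orthogonality $P^{n-1}\cdot D=0$ for components $D$ of $N$ is not a formal ``$P\cdot N$-type'' consequence of the decomposition for transcendental classes: it is precisely (a case of) the orthogonality relation \eqref{goo2} discussed in Section \ref{sect8}, known in the projective case \cite{WN,BDPP,BFJ} and for volume zero \cite{To2}, but open on general compact K\"ahler manifolds. Worse, in this paper the vanishing $\int_V P^{\dim V}=0$ for components of $E_{nK}(\alpha)$ is \emph{deduced} from Lemma \ref{l1} together with \cite{CT} (this is the proof of Theorem \ref{l5}), so using it as an input inverts the logical order. Your fallbacks also fail: a bimeromorphic model carrying a Zariski decomposition need not exist \cite{Bo2,Na}, and the assertion that $P-\varepsilon D$ is not pseudoeffective is not the defining property of the decomposition and is false in general ($\nu(P,D)=0$ does not control the pseudoeffective threshold of $P$ along $D$; think of $P$ ample and $D$ arbitrary), nor do you explain how it would force $D\subset E_{nK}(P)$.

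The paper's actual argument for this step is purely pluripotential-theoretic and uses only the minimality defining $\nu(\alpha,D)$, with no intersection numbers. Suppose some K\"ahler current $\ti{T}\in P$ with analytic singularities, $\ti{T}\geq\varepsilon\omega$, were smooth at the generic point of a component $D$ of $N$. Choose $\eta>0$ so small that the class $\eta N$ admits a smooth representative $\chi$ with $\chi\geq-\frac{\varepsilon}{2}\omega$. Then
$$S=\ti{T}+\chi+(1-\eta)\mathcal{N}\geq\frac{\varepsilon}{2}\omega$$
is a K\"ahler current in $[\alpha]=P+\eta N+(1-\eta)N$ with $\nu(S,D)=(1-\eta)\nu(\alpha,D)<\nu(\alpha,D)$, contradicting the fact that every positive current in $[\alpha]$ is more singular than $T_{min}$ and hence has generic Lelong number at least $\nu(\alpha,D)$ along $D$. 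This shows every K\"ahler current in $P$ with analytic singularities is singular along each component of $N$, which is the missing containment; with this replacement your outline closes up, and it works without any nefness assumption on $P$.
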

\begin{proof}
By slight abuse of notation, let us also set $\mathcal{N}=\sum_D \nu([\alpha],D)[D]$, which now is a closed positive current in the cohomology class $N$. Fix $\delta\geq 0$.
We claim that the map $$\mathcal{P}:T\mapsto T-\mathcal{N},$$
gives a bijection between the set of closed positive $(1,1)$ currents $T$ in $[\alpha]$ which satisfy $T\geq\delta\omega$, and closed positive $(1,1)$ currents $\ti{T}$ in $P$ which satisfy $\ti{T}\geq\delta\omega$.
Indeed, if $T\in [\alpha]$ is a closed $(1,1)$ current with $T\geq\delta\omega$,
for any prime divisor $D$ we have $\nu(T,D)\geq\nu(T_{min},D)=\nu([\alpha],D)$, since $T_{min}$ is less singular than $T$.
On the other hand, we have the Siu decomposition
$$T=S+\sum_D\nu(T,D)[D],$$
which has the property that $S\geq\delta\omega$ too. Therefore
$$T-\mathcal{N}=S+\sum_D(\nu(T,D)-\nu([\alpha],D))[D]\geq S\geq\delta\omega,$$
is a closed $(1,1)$ current in the class $P$. Conversely if $\ti{T}$ is a closed $(1,1)$ current in $P$ with $\ti{T}\geq\delta\omega$, then clearly $\ti{T}+\mathcal{N}\geq \delta\omega$ is a closed $(1,1)$ current in $[\alpha]$.

Also $\mathcal{P}$ clearly sends K\"ahler currents with analytic singularities in $[\alpha]$ to K\"ahler currents with analytic singularities in $P$, and vice versa.
Recall that $$E_{\rm nK}(\alpha)=\bigcap_{T} E_+(T),$$
where the intersection is over all K\"ahler currents $T\in[\alpha]$ with analytic singularities, and similarly for $P$.
If $x\not\in E_{\rm nK}(\alpha)$ then we can find a K\"ahler current $T\in [\alpha]$ with analytic singularities which is smooth near $x$. This implies that $x$ is not in the support of $\mathcal{N}$, and so $T-\mathcal{N}$ is a K\"ahler current with analytic singularities
in $P$, smooth near $x$. This shows that $E_{\rm nK}(P)\subset E_{\rm nK}(\alpha)$.

On the other hand, if $\ti{T}\in P$ is a K\"ahler current with analytic singularities, we
claim that $\nu(\ti{T},D)>0$ for any prime divisor $D$ with $\nu([\alpha],D)>0$. If not, then there would be a prime divisor $D$ with $\nu([\alpha],D)>0$ but $\ti{T}$ is smooth at the generic point of $D$. We have
$\ti{T}\geq\ve\omega$ for some $\ve>0$. Pick $\eta>0$ small enough so that the class $\eta N$ has a smooth representative $\chi$ with $\chi\geq -\frac{\ve}{2}\omega$. Then
$$S=\ti{T}+\chi+(1-\eta)\mathcal{N}\geq \frac{\ve}{2}\omega,$$
is a K\"ahler current with analytic singularities in $[\alpha]$ and
$$\nu(S,D)=(1-\eta)\nu([\alpha],D)<\nu([\alpha],D),$$
which is a contradiction.

Let now $x\not\in E_{\rm nK}(P)$, so there is a K\"ahler current $\ti{T}\in P$ with analytic singularities, smooth near $x$. We have just proved that $\nu(\ti{T},D)>0$ for any prime divisor $D$ in the support of $\mathcal{N}$, hence $x\not\in\mathrm{Supp}(N)$, and so $$T=\mathcal{P}^{-1}(\ti{T})=\ti{T}+\mathcal{N},$$
is a K\"ahler current in $[\alpha]$ with analytic singularities and smooth near $x$, which proves $E_{\rm nK}(\alpha)\subset E_{\rm nK}(P).$
\end{proof}

The following is an improvement on \cite[Proposition 4.6]{Ma}.

\begin{lem}\label{l3}
If $[\alpha]$ is big and $V$ is an irreducible positive dimensional subvariety of $X$ which is not contained in $E_{\rm nn}(\alpha)$, then we have
$$\langle\alpha^{\dim V}\rangle_{X|V}=\langle P^{\dim V}\rangle_{X|V}.$$
\end{lem}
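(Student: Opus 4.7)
The plan is to establish an approximate bijection between currents admissible for $\langle\alpha^k\rangle_{X|V}$ and those for $\langle P^k\rangle_{X|V}$, using the current of integration $\mathcal{N}:=\sum_D\nu(\alpha,D)[D]$ along $N$. Since $\mathrm{Supp}(N)\subset E_{nn}(\alpha)$, the hypothesis $V\not\subset E_{nn}(\alpha)$ guarantees $V\not\subset\mathrm{Supp}(N)$, which will be used repeatedly.

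For the inequality $\langle\alpha^k\rangle_{X|V}\geq\langle P^k\rangle_{X|V}$, I would simply lift: given an admissible $\tilde T\in P$ with $\tilde T\geq-\varepsilon\omega$, analytic singularities not containing $V$, the current $T:=\tilde T+\mathcal{N}\in[\alpha]$ still satisfies $T\geq-\varepsilon\omega$ because $\mathcal{N}\geq 0$; its singular set $E_+(\tilde T)\cup\mathrm{Supp}(N)$ does not contain the irreducible $V$; and because $\mathcal{N}$ is purely singular, the forms $(T+\varepsilon\omega)_{ac}$ and $(\tilde T+\varepsilon\omega)_{ac}$ coincide pointwise on the Zariski open set $V_{reg}\setminus\mathrm{Supp}(N)$. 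Equality of the relevant integrals follows, and taking the supremum in $\tilde T$ and then $\varepsilon\downarrow 0$ gives the inequality.

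For the reverse inequality, given $T\in[\alpha]$ admissible, I would use the Siu decomposition $T+\varepsilon\omega=S+\sum_D m_D[D]$ of the positive current $T+\varepsilon\omega\in[\alpha+\varepsilon\omega]$, which gives $m_D=\nu(T,D)\geq\nu(\alpha+\varepsilon\omega,D)$ by comparison with currents of minimal singularity. Since $\nu(\alpha+\varepsilon\omega,D)\nearrow\nu(\alpha,D)$ as $\varepsilon\downarrow 0$ by a standard result of Boucksom, and only finitely many $D$ contribute to $N$, for any $\eta\in(0,1)$ there is $\varepsilon_0(\eta)>0$ such that $m_D\geq(1-\eta)\nu(\alpha,D)$ holds uniformly in $T$ for all $\varepsilon<\varepsilon_0$ and all $D$. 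Fix now a smooth representative $\chi_\eta$ of $-\eta[N]$ with $\chi_\eta\geq-C\eta\omega$, where $C$ depends only on $N$. The corrected current
\[
\tilde T:=T-(1-\eta)\mathcal{N}+\chi_\eta\in[P]
\]
satisfies $\tilde T\geq-(\varepsilon+C\eta)\omega$ (via the Siu identity above, writing $T-(1-\eta)\mathcal{N}+\varepsilon\omega=S+\sum(m_D-(1-\eta)\nu(\alpha,D))[D]\geq 0$), has analytic singularities, and has $V\not\subset E_+(\tilde T)$, making it admissible for $\langle P^k\rangle_{X|V}$. The a.c. parts satisfy $(\tilde T+(\varepsilon+C\eta)\omega)_{ac}=(T+\varepsilon\omega)_{ac}+(\chi_\eta+C\eta\omega)$, where the smooth correction is $O(\eta)$ in $C^0$, so a binomial expansion together with Lemma \ref{finite} yields
\[
\int_{V_{reg}}(T+\varepsilon\omega)_{ac}^k\leq\int_{V_{reg}}(\tilde T+(\varepsilon+C\eta)\omega)_{ac}^k+O_V(\eta),
\]
and taking $\sup_T$, then $\varepsilon\downarrow 0$ with $\eta$ fixed, then $\eta\downarrow 0$ gives $\langle\alpha^k\rangle_{X|V}\leq\langle P^k\rangle_{X|V}$.

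The main obstacle I anticipate is the uniform-in-$T$ lower bound $m_D\geq(1-\eta)\nu(\alpha,D)$, which is what allows us to subtract a definite multiple of $\mathcal{N}$ while preserving near-positivity of the resulting current. This step rests on the continuity of minimal multiplicities in the cohomology class and on the finiteness of $\mathrm{Supp}(N)$; once it is in place, the rest is a routine perturbation controlled by the Lipschitz estimates of Lemma \ref{finite}.
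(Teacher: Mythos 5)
Your proposal is correct and follows essentially the same route as the paper's proof: the easy inequality by adding $\mathcal{N}=\sum_D\nu(\alpha,D)[D]$, whose restriction to $V_{reg}$ has vanishing absolutely continuous part, and the hard inequality by using the Siu decomposition together with the comparison $\nu(T,D)\geq\nu(\alpha+\varepsilon\omega,D)$ and the convergence of these minimal multiplicities to $\nu(\alpha,D)$ over the finitely many divisors in $N$, then correcting the cohomology class to $P$ by a small smooth form and controlling the resulting integrals via Lemma \ref{finite}. The only (cosmetic) difference is parametrization: the paper subtracts $\mathcal{N}_\varepsilon=\sum_D\nu_\varepsilon(\alpha,D)[D]$ and corrects by smooth forms of size $\psi(\varepsilon)\to 0$, whereas you subtract $(1-\eta)\mathcal{N}$ and correct by an $O(\eta)$ smooth form, taking $\varepsilon\downarrow 0$ before $\eta\downarrow 0$.
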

\begin{proof}
Write $[\alpha]=P+N$ for the divisorial Zariski decomposition.
Since $V$ is not contained in $E_{\rm nn}(\alpha)$, it is also not contained in $\mathrm{Supp}(N)$, since this is a subset of $E_{\rm nn}(\alpha)$.

We need a variant of the construction in the previous lemma. For $\ve>0$ let $T_{min,\ve}$ be a positive current with minimal singularities in the class $[\alpha+\ve\omega]$, and let
$\nu_\ve(\alpha,x)=\nu(T_{min,\ve},x)$. Then we have that $\nu_\ve(\alpha,x)$ increases to $\nu([\alpha],x)$ as $\ve$ decreases to zero. As before, let $\nu_\ve(\alpha,D)=\inf_{x\in D}\nu_\ve(\alpha,x)$ and define
a closed positive current $\mathcal{N}_\ve=\sum_D \nu_\ve(\alpha,D)[D]$. This sum converges, since it is dominated by $\mathcal{N}$ because $\nu_\ve(\alpha,D)\leq \nu([\alpha],D)$, which also implies that $\mathcal{N}_\ve$ is the current of integration on an effective $\mathbb{R}$-divisor, i.e. only finitely many terms $\nu_\ve(\alpha,D)[D]$ are nonzero. Of course if $\nu_\ve(\alpha,D)>0$, then $\nu([\alpha],D)>0$ too, so let $D_1,\dots, D_k$ be all the prime divisors in $X$ with $\nu([\alpha],D)>0$.
Fix $\beta_1,\dots,\beta_k$ closed smooth forms on $X$ which represent $[D_1],\dots,[D_k]$.

First of all, if $\ti{T}\in P$ is a closed $(1,1)$ current with analytic singularities and with $\ti{T}\geq -\ve\omega$ and $V\not\subset E_+(\ti{T})$, then $\ti{T}+\mathcal{N}\geq-\ve\omega$ is a closed $(1,1)$ current with analytic singularities in $[\alpha]$ with singular set not containing $V$, and
$$\int_{V_{\rm reg}} (\ti{T}|_{V_{\rm reg}}+\ve\omega)_{\rm ac}^{\dim V}=\int_{V_{\rm reg}}((\ti{T}+\mathcal{N})|_{V_{\rm reg}}+\ve\omega)_{\rm ac}^{\dim V},$$
since $(\mathcal{N}|_{V_{\rm reg}})_{\rm ac}=0$. This shows that $\langle P^{\dim V}\rangle_{X|V}\leq \langle \alpha^{\dim V}\rangle_{X|V}$.

If now $T\in[\alpha]$ is a closed $(1,1)$ current with analytic singularities, with $T\geq -\ve\omega$ and $V\not\subset E_+(T)$, then as in the proof of Lemma \ref{l1} we have that $T-\mathcal{N}_\ve\geq-\ve\omega$ and so
$$\ti{T}=T-\mathcal{N}_\ve+\sum_{j=1}^k \left(\nu_\ve(\alpha,D_j)-\nu([\alpha],D_j)\right)\beta_j\geq-\ve\omega-\psi(\ve)\omega,$$
is a closed $(1,1)$ current in $P$ with analytic singularities and with singular set not containing $V$, where $\psi(\ve)\to 0$ as $\ve\to 0$ (and is independent of $T$), since $\nu_\ve(\alpha,D_j)\to\nu([\alpha], D_j)$. Furthermore,
\[\begin{split}
\int_{V_{\rm reg}} (T|_{V_{\rm reg}}+\ve\omega)_{\rm ac}^{\dim V}&=\int_{V_{\rm reg}} ((T-\mathcal{N}_\ve)|_{V_{\rm reg}}+\ve\omega)_{\rm ac}^{\dim V}\\
&=\int_{V_{\rm reg}} (\ti{T}|_{V_{\rm reg}}+(\ve+\psi(\ve))\omega)_{\rm ac}^{\dim V}+\Psi(\ve),
\end{split}\]
using Lemma \ref{finite}, where $\Psi(\ve)\to 0$ as $\ve\to 0$ (and is independent of $T$). This implies $\langle P^{\dim V}\rangle_{X|V}\geq \langle \alpha^{\dim V}\rangle_{X|V}$.
\end{proof}

We can now give the proof of Theorem \ref{l5}:
\begin{proof}[Proof of Theorem \ref{l5}]
If $V\subset E_{\rm nn}(\alpha)$, then we have $\langle\alpha^{\dim V}\rangle_{X|V}=0$ by definition, so we may assume that $V$ is not contained in $E_{\rm nn}(\alpha)$. Since we assume that $P$ is nef, Lemmas \ref{l3} and \ref{l4} give
$$\langle\alpha^{\dim V}\rangle_{X|V}=\langle P^{\dim V}\rangle_{X|V}=\int_V P^{\dim V}.$$
Thanks to Lemma \ref{l1} we have $E_{\rm nK}(\alpha)=E_{\rm nK}(P),$ so that $V$ is one of the irreducible components of $E_{\rm nK}(P)$. The main result of \cite{CT} then shows that
$$\int_V P^{\dim V}=0.$$
\end{proof}

In fact, Theorem \ref{l5} holds more generally if $\alpha$ admits a Zariski decomposition on some bimeromorphic model.

\begin{thm}\label{l7}
If $[\alpha]$ is big and if there exists $\mu:\ti{X}\to X$ a modification, with $\ti{X}$ K\"ahler and such that $[\mu^*\alpha]$ admits a Zariski decomposition, and $V$ is one of the irreducible components of $E_{\rm nK}(\alpha)$ which we assume is not contained in $\mu(\mathrm{Exc}(\mu))$, then
$$\langle\alpha^{\dim V}\rangle_{X|V}=0.$$
\end{thm}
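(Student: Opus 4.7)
The plan is to reduce to Theorem \ref{l5} applied on the K\"ahler manifold $\ti{X}$, using bimeromorphic invariance of the numerical restricted volume to transfer back to $X$.

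First I would arrange that the modification $\mu$ is itself a composition of blowups with smooth centers, so that Lemma \ref{invar} is directly applicable. By Hironaka's resolution I can pick $\mu':X'\to \ti{X}$ so that the composition $\mu\circ\mu':X'\to X$ is a composition of blowups with smooth centers, chosen so that $\mu'(\mathrm{Exc}(\mu'))\subset \mathrm{Exc}(\mu)$. Then $(\mu\circ\mu')(\mathrm{Exc}(\mu\circ\mu'))=\mu(\mathrm{Exc}(\mu))$, and the hypothesis $V\not\subset \mu(\mathrm{Exc}(\mu))$ is preserved on the new model. Crucially, the hypothesis of admitting a Zariski decomposition also lifts: writing $[\mu^*\alpha]=P+N$ with $P$ nef, pullback yields the decomposition $(\mu\circ\mu')^*\alpha=(\mu')^*P+(\mu')^*N$, where $(\mu')^*P$ is nef on $X'$ and $(\mu')^*N$ is effective. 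Since nef classes on compact K\"ahler manifolds have vanishing minimal multiplicities along every prime divisor, one verifies that this is in fact the divisorial Zariski decomposition of $[(\mu\circ\mu')^*\alpha]$, with nef positive part $(\mu')^*P$. Thus I may replace $\mu$ by $\mu\circ\mu'$ and assume from the outset that $\mu$ is a composition of blowups with smooth centers, while $[\mu^*\alpha]$ still has a Zariski decomposition on $\ti{X}$.

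Now let $\ti{V}$ denote the proper transform of $V$ under $\mu$, so that $\mu|_{\ti{V}}:\ti{V}\to V$ is a modification. By Lemma \ref{pull},
\[
E_{nK}(\mu^*\alpha)=\mu^{-1}(E_{nK}(\alpha))\cup \mathrm{Exc}(\mu),
\]
and since $V$ is an irreducible component of $E_{nK}(\alpha)$ with $V\not\subset \mu(\mathrm{Exc}(\mu))$, its proper transform $\ti{V}$ is an irreducible component of $E_{nK}(\mu^*\alpha)$. Applying Theorem \ref{l5} to the class $[\mu^*\alpha]$ on $\ti{X}$, which admits a Zariski decomposition by hypothesis, yields $\langle (\mu^*\alpha)^{\dim \ti{V}}\rangle_{\ti{X}|\ti{V}}=0$. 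Finally, Lemma \ref{invar} gives
\[
\langle \alpha^{\dim V}\rangle_{X|V}=\langle (\mu^*\alpha)^{\dim V}\rangle_{\ti{X}|\ti{V}}=0,
\]
which is the desired conclusion.

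The main technical obstacle is the preservation of the Zariski decomposition under the further modification $\mu'$: the pulled-back decomposition is evidently nef plus effective, but one must check that it genuinely agrees with the divisorial Zariski decomposition of $[(\mu\circ\mu')^*\alpha]$ on $X'$. This rests on the fact that pullbacks of nef classes remain nef, and that nef classes on compact K\"ahler manifolds carry no divisorial minimal multiplicities, so that all the divisorial contributions to the Lelong numbers of the upstairs class come exactly from $(\mu')^*N$. Once this reduction is in hand, the remainder of the argument is a direct application of Lemmas \ref{pull} and \ref{invar} together with Theorem \ref{l5}.
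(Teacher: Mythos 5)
Your proposal is correct, and its skeleton is the same as the paper's (transfer to the model via Lemmas \ref{pull} and \ref{invar}, then exploit the nef positive part), but it is packaged differently in two ways. First, instead of citing Theorem \ref{l5} as a black box on the model, the paper simply re-runs its proof there: Lemma \ref{nonnef} gives $\ti{V}\not\subset E_{nn}(\mu^*\alpha)$, Lemmas \ref{l3} and \ref{l4} give $\langle\mu^*\alpha^{\dim V}\rangle_{\ti{X}|\ti{V}}=\int_{\ti{V}}P^{\dim V}$, and Lemma \ref{l1} plus the main theorem of \cite{CT} gives $\int_{\ti{V}}P^{\dim V}=0$; your appeal to Theorem \ref{l5} is an equivalent repackaging of exactly this chain. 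Second, you insert a preliminary reduction replacing $\mu$ by a composition of blowups with smooth centers so that Lemma \ref{invar} applies literally, and the price is that you must transport the hypothesis ``$[\mu^*\alpha]$ has a Zariski decomposition'' to the new model $X'$; the paper avoids this issue by never leaving $\ti{X}$ and applying Lemma \ref{invar} to $\mu$ itself. Your transported claim is true, but the justification you give is too quick: the vanishing of minimal multiplicities of nef classes only yields the coefficientwise inequality $N((\mu\circ\mu')^*\alpha)\leq(\mu')^*N$, and if this were strict the positive part upstairs would be $(\mu')^*P$ plus a nonzero effective divisor, which need not be nef, so the equality is genuinely needed. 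It can be checked with tools already in the paper: by the Lemma \ref{l1} correspondence a current with minimal singularities in $[\mu^*\alpha]$ has the form $S_{min}+\mathcal{N}$ with $S_{min}$ minimal in $P$, and by \cite[Proposition 1.12]{BEGZ} (already used in Lemma \ref{nonnef}) its pullback $(\mu')^*S_{min}+[(\mu')^*N]$ has minimal singularities in $[(\mu\circ\mu')^*\alpha]$; since $(\mu')^*S_{min}$ has minimal singularities in the nef and big class $(\mu')^*P$, its Lelong numbers vanish identically, so the generic Lelong numbers of the minimal current along each prime divisor are exactly the coefficients of $(\mu')^*N$, which is the desired identity $N((\mu\circ\mu')^*\alpha)=(\mu')^*N$ with nef positive part $(\mu')^*P$. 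With that verification spelled out (and with the routine check that the centers of $\mu'$ can be chosen over $\mu(\mathrm{Exc}(\mu))$, so that $V\not\subset(\mu\circ\mu')(\mathrm{Exc}(\mu\circ\mu'))$), your argument is complete; alternatively, you could dispense with the reduction altogether by noting that the proof of Lemma \ref{invar} only needs a form cohomologous to a $\mu$-exceptional effective divisor whose subtraction from $\mu^*\omega$ is K\"ahler, which is available for the given $\mu$, and this is effectively what the paper does.
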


Note that the assumptions of this theorem hold for example if we assume that there exists a positive current $T_{min}$ in $[\alpha]$ with minimal singularities which has analytic singularities, by \cite[Proposition 4.1]{Ma3} (although it is stated there for divisors, the simple proof works for general $(1,1)$ classes). In this case, the map $\mu$ is a resolution of the singularities of $T_{min}$, which are along $E_{\rm nn}(\alpha)=\mu(\mathrm{Exc}(\mu))$, and we may assume without loss of generality that $V\not\subset E_{\rm nn}(\alpha)$.

\begin{proof}
Since $V$ is not contained in $\mu(\mathrm{Exc}(\mu))$, then Lemma \ref{invar}
gives $\langle\alpha^k\rangle_{X|V}=\langle\mu^*\alpha^k\rangle_{\ti{X}|\ti{V}}$, where $\ti{V}$ is the proper transform of $V$ under $\mu$, and $k=\dim V$. We also have that
$E_{\rm nK}(\mu^*\alpha)=\mu^{-1}(E_{\rm nK}(\alpha))\cup\mathrm{Exc}(\mu)$, thanks to Lemma \ref{pull}, and so $\ti{V}$ is an irreducible component of
$E_{\rm nK}(\mu^*\alpha)$.
The result is trivial if $V\subset E_{\rm nn}(\alpha)$, so we may assume that $V\not\subset E_{\rm nn}(\alpha)$, and thanks to Lemma \ref{nonnef} we have that
$\ti{V}\not\subset E_{\rm nn}(\mu^*\alpha)$.
If $P$ denotes the positive part of $[\mu^*\alpha]$ in the divisorial Zariski decomposition, by assumption we have that $P$ is nef, and Lemmas \ref{l3} and \ref{l4} give
$$\langle\mu^*\alpha^k\rangle_{\ti{X}|\ti{V}}=\langle P^k\rangle_{\ti{X}|\ti{V}}=\int_{\ti{V}} P^k.$$
Thanks to Lemma \ref{l1} we have $E_{\rm nK}(\mu^*\alpha)=E_{\rm nK}(P),$ so that $\ti{V}$ is one of the irreducible components of $E_{\rm nK}(P)$. The main result of \cite{CT} then shows that
$$\int_{\ti{V}} P^k=0.$$
\end{proof}

\section{Moving Seshadri constants}\label{sect7}
Recall that if $[\alpha]$ is a nef $(1,1)$ class and $x\in X$ we let the Seshadri constant of $[\alpha]$ at $x$ be
$$\ve(\alpha,x)=\sup\{ \lambda\geq 0\ |\ \pi^*[\alpha]-\lambda [E]\ {\rm nef}\},$$
where $\pi:\ti{X}\to X$ is the blowup of $X$ at $x$, and $E$ is the exceptional divisor. These were first introduced by Demailly \cite{Dem}.
It follows easily from the definition that $\ve(\cdot,x)$ is a continuous function on the nef cone. Furthermore, it is also concave since
$\ve(\alpha+\beta,x)\geq \ve(\alpha,x)+\ve(\beta,x)$ for any nef classes $[\alpha],[\beta]$, and it satisfies \eqref{subvar} as shown in \cite[Theorem 2.8]{To} (cf. \cite[Proposition 5.1.9]{Laz} in the algebraic case) using crucially the Demailly-P\u{a}un Theorem \cite{DP}. It is also not hard to see that (see \cite{Dem}) if $[\alpha]$ is K\"ahler then we have
\begin{equation}\label{seseq3}
\ve(\alpha,x)=\sup\left\{
     \gamma\geq 0\ \bigg|\ \begin{array}{ll}&\exists T\in[\alpha], T\geq 0, T \textrm{ has an isolated}\\
& \textrm{ singularity at }x \textrm{ with } \nu(T,x)\geq\gamma \end{array}\right\}.
\end{equation}

The moving Seshadri constants are a generalization of this concept to classes which need not be nef. Let $[\alpha]$ be a pseudoeffective class on a compact K\"ahler manifold $X$. Given any point $x\in X$, following \cite{Nak2, ELMNP} in the algebraic case, we define the moving Seshadri constant $\ve(\|\alpha\|,x)$ as follows. If $x\in E_{\rm nK}(\alpha)$ we set $\ve(\|\alpha\|,x)=0$, and otherwise we set
$$\ve(\|\alpha\|,x)=\sup_{\mu^*[\alpha]=[\beta]+[E]}\ve(\beta,\mu^{-1}(x)),$$
where the supremum is over all modifications $\mu:\ti{X}\to X$, which are isomorphisms near $x$, and over all decompositions
$\mu^*[\alpha]=[\beta]+[E]$ where $[\beta]$ is a K\"ahler class and $E$ is an effective $\mathbb{R}$-divisor which does not contain $\mu^{-1}(x)$. Clearly, when $[\alpha]=c_1(D)$ for a pseudoeffective $\mathbb{R}$-divisor $D$, we have that $\ve(\|\alpha\|,x)=\ve(\|D\|,x),$ as defined in \cite[Section 6]{ELMNP}.

Note that such decompositions always exist when $x\not\in E_{\rm nK}(\alpha)$, since we may pick a K\"ahler current $T$ in the class $[\alpha]$ which has analytic singularities and is smooth near $x$, and let $\mu$ be the resolution of the singularities of $T$, and all the stated properties hold. More precisely, say we have that $T\geq \ve\omega$, and we resolve so that
$$\mu^*T=\theta+[F],$$
where $\theta\geq\ve\mu^*\omega$ is a smooth form and $F$ an effective $\mathbb{R}$-divisor, and $\mu$ is a composition of blowups with smooth centers. But recall that if $G$ is the exceptional set of $\mu$ (which is an effective divisor) then there is $\delta>0$ small so that
$[\theta]-\delta[G]$ is a K\"ahler class on $\ti{X}$. Then we let $[\beta]=[\theta]-\delta[G]$ and $E=F+\delta G$.

Note also that if $\mu$ is as above then we have $\ve(\beta,\mu^{-1}(x))\leq \vol(\beta)^{\frac{1}{n}},$ thanks to \eqref{subvar},
and also
$$\vol(\beta)=\int_{\ti{X}}\beta^n\leq \vol(\beta+[E])=\vol(\mu^*\alpha)=\vol(\alpha),$$
where the inequality follows from the definition (and the fact that $E$ is an effective $\mathbb{R}$-divisor). Together, these imply that
$$\ve(\|\alpha\|,x)\leq \vol(\alpha)^{\frac{1}{n}},$$
and so the supremum in the definition of the moving Seshadri constant is finite.

First, let us observe the following:
\begin{prop}If $[\alpha]$ is a nef class, then for every $x\in X$ we have
$$\ve(\|\alpha\|,x)=\ve(\alpha,x).$$
\end{prop}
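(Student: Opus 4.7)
The plan is to reduce to the essential case $[\alpha]$ nef and big with $x \notin E_{nK}(\alpha)$, and then prove two inequalities. If $[\alpha]$ is not big, then $\vol(\alpha) = 0$, so \eqref{subvar} with $V = X$ gives $\ve(\alpha, x) = 0$, while $E_{nK}(\alpha) = X$ forces $\ve(\|\alpha\|, x) = 0$ by definition. If $[\alpha]$ is big with $x \in E_{nK}(\alpha)$, our earlier work \cite{CT} identifies $E_{nK}(\alpha) = \Null(\alpha)$ in the nef case, so some positive-dimensional $V \ni x$ satisfies $\int_V \alpha^{\dim V} = 0$; then \eqref{subvar} forces $\ve(\alpha, x) = 0$, matching $\ve(\|\alpha\|, x) = 0$.

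For the remaining main case, the inequality $\ve(\|\alpha\|, x) \leq \ve(\alpha, x)$ would follow from \eqref{subvar}. Given any admissible decomposition $\mu^*[\alpha] = [\beta] + [E]$ with $\mu$ iso near $x$, and any irreducible $V \ni \ti{x} := \mu^{-1}(x)$, setting $V' = \mu(V) \ni x$ one has $\dim V' = \dim V$ and $\textrm{mult}_x V' = \textrm{mult}_{\ti{x}} V$. Expanding $(\mu^*\alpha)^{\dim V} = (\beta + E)^{\dim V}$ and noting that each cross term $\beta^{\dim V - k}\wedge E^k$ has nonnegative integral on $V$ (since $\beta$ is K\"ahler, $E$ effective, and $V \not\subseteq \mathrm{supp}(E)$) yields $\int_V \beta^{\dim V} \leq \int_{V'} \alpha^{\dim V'}$. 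Applying \eqref{subvar} to the nef class $\beta$ gives $\ve(\beta, \ti{x}) \leq \ve(\alpha, x)$, and taking supremum over decompositions closes this direction.

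For the reverse inequality, I would construct a family of approximating decompositions. Since $x \notin E_{nK}(\alpha)$, fix a K\"ahler current $S \in [\alpha]$ with $S \geq \delta_S \omega$ and analytic singularities disjoint from $x$. Using nefness of $[\alpha]$ to find smooth $\alpha_\eta \in [\alpha]$ with $\alpha_\eta \geq -\eta \omega$, set $T_t := (1-t)\alpha_{\eta(t)} + tS \in [\alpha]$ for $\eta(t) = t\delta_S/(2(1-t))$; this is a K\"ahler current ($T_t \geq t\delta_S/2\cdot\omega$) with analytic singularities a scaled copy of those of $S$. Resolving these on $\mu: \ti{X} \to X$ iso near $x$, writing $\mu^* T_t = \theta_t + t[F_S]$, and subtracting a small multiple of the exceptional divisor $G$ (using the standard fact, cf. Lemma \ref{invar}, that $\mu^*\omega - \delta_0 \eta_G$ is K\"ahler for some $\delta_0 > 0$ with $\eta_G$ smooth in $[G]$), one obtains $\mu^*[\alpha] = [\beta_t] + [E_t]$ with $[\beta_t] = [\theta_t] - (t\delta_S\delta_0/2)[G]$ K\"ahler and $[E_t] = t[F_S] + (t\delta_S\delta_0/2)[G]$ effective not through $\ti{x}$. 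Crucially, for any fixed $t_0 > 0$ and $t \in (0, t_0]$ the class $[\beta_t]$ is a convex combination of $\mu^*[\alpha]$ and $[\beta_{t_0}]$, specifically $[\beta_t] = (1 - t/t_0)\mu^*[\alpha] + (t/t_0)[\beta_{t_0}]$.

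The key final step, and main anticipated obstacle, is to verify $\ve(\beta_t, \ti{x}) \to \ve(\mu^*\alpha, \ti{x}) = \ve(\alpha, x)$ as $t \to 0$; this is delicate because $\mu^*[\alpha]$ lies on the boundary of the nef cone of $\ti{X}$ (it is semipositive but degenerates on $\mu$-fibres), so continuity of the Seshadri constant is nontrivial there. The plan is to combine: (i) upper semi-continuity of $\ve(\cdot, \ti{x})$ on the nef cone, which follows from \eqref{subvar} since this Seshadri constant is expressed as an infimum of continuous functions; and (ii) concavity on the nef cone, equivalent to the super-additivity $\ve(A+B,\cdot)\geq\ve(A,\cdot)+\ve(B,\cdot)$ for nef $A,B$. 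The segment structure gives $\ve(\beta_t, \ti{x}) \geq (1-t/t_0)\ve(\mu^*\alpha, \ti{x}) + (t/t_0)\ve(\beta_{t_0}, \ti{x})$, whence $\liminf_{t \to 0} \ve(\beta_t, \ti{x}) \geq \ve(\mu^*\alpha, \ti{x})$, while upper semi-continuity supplies the matching upper bound, completing the argument.
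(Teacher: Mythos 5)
Your overall route is the same as the paper's: the same splitting according to whether $x\in E_{nK}(\alpha)$ (handled via \eqref{subvar} and the main theorem of \cite{CT}), the same comparison of Seshadri constants upstairs and downstairs for the inequality $\ve(\|\alpha\|,x)\leq\ve(\alpha,x)$, and for the reverse inequality the same device of pushing the K\"ahler part of a decomposition along a segment towards $\mu^*[\alpha]$ inside the nef cone. Two remarks on that second half: your family of currents $T_t$ is an unnecessary detour, since only the classes matter — the paper simply resolves one K\"ahler current smooth near $x$, getting $\mu^*[\alpha]=[\beta]+[E]$, and uses $[\beta_k]=(1-\tfrac1k)\mu^*[\alpha]+\tfrac1k[\beta]$, which is exactly your segment $[\beta_t]=\mu^*[\alpha]-tC$; and your concavity argument giving $\liminf_{t\to 0}\ve(\beta_t,\ti{x})\geq\ve(\mu^*\alpha,\ti{x})$ is correct and is all that is needed (the upper semicontinuity half is superfluous, since the definition only requires the lower bound; the paper invokes continuity of $\ve(\cdot,\mu^{-1}(x))$ on the nef cone for the same purpose).

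There is, however, a genuine flaw in your first inequality. You justify $\int_{\ti{V}}\beta^{\dim V}\leq\int_{\ti{V}}(\mu^*\alpha)^{\dim V}$ by expanding $(\beta+E)^{\dim V}$ and asserting that every cross term $\int_{\ti{V}}\beta^{\dim V-k}\wedge E^{k}$ is nonnegative. This is false for $k\geq 2$: powers of an effective divisor class are not pseudoeffective in general, and here $E$ typically contains $\mu$-exceptional components of negative self-intersection (already for $\ti{V}=\ti{X}$ a surface, $\int_{\ti{X}}E^2$ can be negative). The inequality you want is true, but it must be proved by substituting one factor at a time: since $\mu^*\alpha=\beta+E$ in cohomology, $\int_{\ti{V}}\beta^{j}\wedge(\mu^*\alpha)^{d-j}=\int_{\ti{V}}\beta^{j+1}\wedge(\mu^*\alpha)^{d-j-1}+\int_{\ti{V}}\beta^{j}\wedge[E]\wedge(\mu^*\alpha)^{d-j-1}$, and the last term is nonnegative because both $\beta$ and $\mu^*\alpha$ are nef and $E$ is effective not containing $\ti{V}$; note that nefness of $\mu^*[\alpha]$, not just of $\beta$, is what makes this work, and only one power of $E$ ever appears. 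The paper sidesteps the issue by monotonicity of the volume: $\int_{\ti{V}}(\mu^*\alpha)^{\dim V}=\vol(\mu^*\alpha|_{\ti{V}})\geq\vol(\beta|_{\ti{V}})=\int_{\ti{V}}\beta^{\dim V}$, since the restricted classes differ by the class of the effective divisor $E|_{\ti{V}}$. With that step repaired, your argument goes through.
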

\begin{proof}
If $x\in E_{\rm nK}(\alpha)$ then $\ve(\|\alpha\|,x)=0$ by definition, while $\ve(\alpha,x)=0$ because of \eqref{subvar} together with the main theorem of \cite{CT}. Assume now that $x\not\in E_{\rm nK}(\alpha)$.
If $\mu:\ti{X}\to X$ is any modification as in the definition above, so that $\mu^*[\alpha]=[\beta]+[E]$ where $[\beta]$ is a K\"ahler class and $E$ is an effective $\mathbb{R}$-divisor which does not contain $\mu^{-1}(x)$,
then note that
\begin{equation}\label{tech}\begin{split}\ve(\alpha,x)&=\inf_{V\ni x}\left(\frac{\int_{V}\alpha^{\dim V}}{\mathrm{mult}_x V}\right)^{\frac{1}{\dim V}}=\inf_{\ti{V}\ni \mu^{-1}(x)}\left(\frac{\int_{\ti{V}}\mu^*\alpha^{\dim V}}{\mathrm{mult}_x V}\right)^{\frac{1}{\dim V}}\\
&\geq \inf_{\ti{V}\ni \mu^{-1}(x)}\left(\frac{\int_{\ti{V}}\beta^{\dim V}}{\mathrm{mult}_x V}\right)^{\frac{1}{\dim V}}=\ve(\beta,\mu^{-1}(x)),\end{split}\end{equation}
using \eqref{subvar} and the fact that, since $E$ is an effective $\mathbb{R}$-divisor which does not contain $\ti{V}$, and $\mu^*[\alpha]$ is nef and $[\beta]$ is K\"ahler, we have
$$\int_{\ti{V}} \mu^*\alpha^{\dim V}=\vol(\mu^*\alpha|_{\ti{V}})\geq \vol(\beta|_{\ti{V}})=\int_{\ti{V}} \beta^{\dim V},$$
as above.
This gives $\ve(\alpha,x)\geq \ve(\|\alpha\|,x)$. Conversely, since $x\not\in E_{\rm nK}(\alpha)$, we can find a K\"ahler current in the class $[\alpha]$ with analytic singularities and smooth near $x$. Resolving it we obtain a modification $\mu:\ti{X}\to X$, which is an isomorphism near $x$, and such that
$\mu^*[\alpha]=[\beta]+[E]$ where $[\beta]$ is a K\"ahler class and $E$ is an effective $\mathbb{R}$-divisor which does not contain $\mu^{-1}(x)$. For any $k\geq 2$ we obtain
$$\mu^*[\alpha]=\left(\left(1-\frac{1}{k}\right)\mu^*[\alpha]+\frac{1}{k}[\beta]\right)+\frac{1}{k}[E]=:[\beta_k]+\frac{1}{k}[E],$$
which is the sum of the K\"ahler class $[\beta_k]$ and an effective $\mathbb{R}$-divisor which does not contain $\mu^{-1}(x)$. By definition,
$$\ve(\|\alpha\|,x)\geq \ve(\beta_k,\mu^{-1}(x)),$$
and letting $k\to\infty$ (recall that $\ve(\cdot,\mu^{-1}(x))$ is continuous on the nef cone) we obtain
$$\ve(\|\alpha\|,x)\geq \ve(\mu^*\alpha,\mu^{-1}(x))=\ve(\alpha,x),$$
where the last equality follows from \eqref{subvar} as in the first line of \eqref{tech}.
\end{proof}

We first prove half of Theorem \ref{movable2}:

\begin{thm}\label{movable} Assume Conjecture \ref{con3}.
Given any pseudoeffective class $[\alpha]$ and any $x\in X$ we have
$$\ve(\|\alpha\|,x)\leq\inf_{V\ni x}\left(\frac{\langle \alpha^{\dim V}\rangle_{X|V}}{\mathrm{mult}_x V}\right)^{\frac{1}{\dim V}},$$
where the infimum is over all irreducible analytic subvarieties $V$ containing $x$.
\end{thm}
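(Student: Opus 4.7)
If $x\in E_{nK}(\alpha)$ then $\ve(\|\alpha\|,x)=0$ by definition and the inequality is trivial, so I may assume $x\notin E_{nK}(\alpha)$. Fix any irreducible subvariety $V\ni x$ of dimension $k$; it suffices to prove
\[\ve(\|\alpha\|,x)\leq\left(\frac{\langle\alpha^{k}\rangle_{X|V}}{\mathrm{mult}_xV}\right)^{1/k},\]
since taking the infimum over $V$ then yields the theorem. By the very definition of $\ve(\|\alpha\|,x)$ as a supremum, the problem reduces to proving the analogous inequality with $\ve(\beta,\mu^{-1}(x))$ in place of $\ve(\|\alpha\|,x)$, for every admissible triple $(\mu,\beta,E)$: namely $\mu\colon\ti X\to X$ a modification isomorphic near $x$, $\mu^*[\alpha]=[\beta]+[E]$, with $[\beta]$ K\"ahler and $E$ an effective $\mathbb{R}$-divisor not containing $\mu^{-1}(x)$. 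By a standard dominating argument (Hironaka, combined with a small perturbation of $\beta$ by an exceptional class absorbed into $E$), I may further assume $\mu$ is a composition of blowups with smooth centers, so that Lemma \ref{invar} applies.

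Let $\ti V\subset\ti X$ denote the proper transform of $V$. Since $\mu$ is an isomorphism near $x\in V$, we have $\dim\ti V=k$, $\mathrm{mult}_{\mu^{-1}(x)}\ti V=\mathrm{mult}_xV$, $\ti V\not\subset\mathrm{Supp}(E)$ (because $\mu^{-1}(x)\in\ti V\setminus E$), and $V\not\subset\mu(\mathrm{Exc}(\mu))$. Applying the subvariety formula \eqref{subvar} to the nef class $[\beta]$ and to the subvariety $\ti V\ni\mu^{-1}(x)$ yields
\[\ve(\beta,\mu^{-1}(x))\leq\left(\frac{\int_{\ti V}\beta^{k}}{\mathrm{mult}_xV}\right)^{1/k},\]
and the whole argument then reduces to the comparison $\int_{\ti V}\beta^{k}\leq\langle\alpha^{k}\rangle_{X|V}$.

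For this last step I would pick a K\"ahler representative $\theta$ of $[\beta]$ and a K\"ahler metric $\ti\omega$ on $\ti X$. The current $T:=\theta+[E]$ is a closed positive current in the big class $[\mu^*\alpha]$ with analytic singularities along $\mathrm{Supp}(E)$, which does not contain $\ti V$; therefore $(T|_{\ti V_{reg}})_{ac}=\theta|_{\ti V_{reg}}$. Since $T\geq 0\geq-\ve\ti\omega$, the current $T$ is admissible in the supremum defining $\langle(\mu^*\alpha)^{k}\rangle_{\ti X|\ti V}$ for every $\ve>0$, which gives
\[\langle(\mu^*\alpha)^{k}\rangle_{\ti X|\ti V}\geq\lim_{\ve\downarrow 0}\int_{\ti V_{reg}}\bigl((T+\ve\ti\omega)|_{\ti V_{reg}}\bigr)_{ac}^{k}=\lim_{\ve\downarrow 0}\int_{\ti V}(\theta+\ve\ti\omega)^{k}=\int_{\ti V}\beta^{k}.\]
Combined with Lemma \ref{invar}, which provides $\langle(\mu^*\alpha)^{k}\rangle_{\ti X|\ti V}=\langle\alpha^{k}\rangle_{X|V}$, this gives the sought estimate. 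The main obstacle I expect is the bookkeeping around the modification $\mu$: one must carefully verify that the supremum defining $\ve(\|\alpha\|,x)$ is unchanged upon restricting to modifications that are compositions of blowups with smooth centers, which is what allows Lemma \ref{invar} to be invoked. Notably, this argument does not seem to require Conjecture \ref{con3}; the hypothesis appears in Theorem \ref{movable} presumably only because the statement is presented as one half of Theorem \ref{movable2}.
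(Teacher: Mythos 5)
Your argument is correct, but it is organized differently from the paper's, so a comparison is worthwhile. Both proofs ultimately rest on the same two ingredients: formula \eqref{subvar} applied to the K\"ahler class $[\beta]$ on the proper transform $\ti{V}$ (with $k=\dim V$), and the comparison $\int_{\ti{V}}\beta^{k}\leq\langle\alpha^{k}\rangle_{X|V}$. The paper obtains this comparison without leaving $X$: it upgrades Lemma \ref{volr} to a supremum over K\"ahler currents smooth near $x$, and for the direction you need it pushes $\omega+[E]$ (with $\omega\in[\beta]$ K\"ahler) forward to $X$, identifies $(\mu^*\mu_*(\omega+[E]))_{ac}=\omega$ via Federer's support theorem, and then regularizes and applies Fatou to produce admissible competitors with analytic singularities; in fact it proves the full Fujita-type equality $\langle\alpha^{k}\rangle_{X|V}=\sup_{\mu}\int_{\ti{V}}\beta^{k}$. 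You instead work upstairs on $\ti{X}$, where $\theta+[E]$ already has analytic singularities and competes directly in the supremum defining $\langle(\mu^*\alpha)^{k}\rangle_{\ti{X}|\ti{V}}$, and you descend via Lemma \ref{invar}; note that the proof of Lemma \ref{invar} contains exactly the push-forward/Federer/regularization/Fatou machinery, so your route outsources rather than avoids it. The price is the reduction of the supremum defining $\ve(\|\alpha\|,x)$ to modifications that are compositions of blowups with smooth centers, which you only sketch and which should be written out: dominate $\mu$ by $\mu'=\mu\circ\nu$ with the blowup centers lying over $\mu(\mathrm{Exc}(\mu))$, so that $\mu'$ is still an isomorphism near $x$; replace $\nu^*[\beta]$ by the K\"ahler class $\nu^*[\beta]-\delta[G]$ with $G$ a $\nu$-exceptional effective divisor and absorb $\nu^*E+\delta G$ into the new negative part; then $\ve(\nu^*\beta,\mu'^{-1}(x))\geq\ve(\beta,\mu^{-1}(x))$ because $\nu$ lifts to a map between the two point blowups under which nefness pulls back, and one concludes by letting $\delta\to 0$ using continuity of Seshadri constants on the nef cone (alternatively, you could skip this reduction entirely by repeating the paper's push-forward argument for an arbitrary $\mu$). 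Two minor points: when invoking \eqref{voldef} upstairs you should note $\ti{V}\not\subset E_{nn}(\mu^*\alpha)$, which follows from $x\notin E_{nK}(\alpha)$, $x\in V$ and Lemma \ref{nonnef}; and your observation that Conjecture \ref{con3} is not actually needed for this one-sided inequality is accurate --- in the paper it is used only to conclude that the right-hand side also vanishes when $x\in E_{nK}(\alpha)$, which is what Theorem \ref{movable2} requires.
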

\begin{proof}
If $x\in E_{\rm nK}(\alpha)$ then both sides are zero, using Conjecture \ref{con3}. If $x\not\in E_{\rm nK}(\alpha)$,
then any irreducible subvariety $V$ which contains $x$ satisfies $V\not\subset E_{\rm nK}(\alpha)$. Thanks to Lemma \ref{volr}, we have that
$$\langle\alpha^k\rangle_{X|V}=\sup_{T}\int_{V_{\rm reg}}(T|_{V_{\rm reg}})_{\rm ac}^k,$$
where $k=\dim V$ and the supremum is over all K\"ahler currents $T\in[\alpha]$ with analytic singularities which do not contain $V$.

In fact, we claim that we obtain the same result if we take the supremum only among all K\"ahler currents $T\in[\alpha]$ with analytic singularities which do not contain $V$, and which are smooth near $x$. Indeed, since $x\not\in E_{\rm nK}(\alpha)$, there exists a K\"ahler current $S\in[\alpha]$ with analytic singularities which do not contain $V$ and which is smooth near $x$. If $T$ is any other K\"ahler current in the class $[\alpha]$ with analytic singularities which do not contain $V$, then writing $T=\alpha+\ddbar\vp_1$ and $S=\alpha+\ddbar\vp_2$ we have that $\hat{T}=\alpha+\ddbar\max(\vp_1,\vp_2)$ is also a K\"ahler current in the class $[\alpha]$, which restricts to $V$, is locally bounded near $x$, and is globally less singular than $T$. Then regularizing $\hat{T}$ we obtain another K\"ahler current $\ti{T}$ in the class $[\alpha]$, with analytic singularities which do not contain $V$ and which is smooth near $x$, and which is less singular than $T$. Therefore \cite[Theorem 1.16]{BEGZ} implies that
\begin{equation}\label{equal2}
\int_{V_{\rm reg}}(T|_{V_{\rm reg}})_{\rm ac}^k\leq \int_{V_{\rm reg}}(\ti{T}|_{V_{\rm reg}})_{\rm ac}^k,
\end{equation}
which proves the claim.

Let now $T$ be any K\"ahler current in $[\alpha]$ with analytic singularities which do not contain $V$, and which is smooth near $x$, and let $\mu:\ti{X}\to X$ be a resolution of its singularities (which is therefore an isomorphism near $x$), so $\mu^*T=\theta+[F]$ where $\theta$ is smooth and semipositive and $F$ is an effective $\mathbb{R}$-divisor which does not contain $\mu^{-1}(x)$. As before, if $G$ denotes the exceptional set of $\mu$ then for all $\delta>0$ sufficiently small we have that $[\theta]-\delta[G]=:[\beta_\delta]$ is a K\"ahler class on $\ti{X}$, and $E_\delta:=F+\delta G$ is still an effective $\mathbb{R}$-divisor which does not contain $\mu^{-1}(x)$. Then $\mu^*[\alpha]=[\beta_\delta]+[E_\delta]$ and we have
\begin{equation}\label{equal}
\int_{V_{\rm reg}}(T|_{V_{\rm reg}})_{\rm ac}^k=\int_{\ti{V}} \theta^k=\int_{\ti{V}} (\beta_\delta+\delta c_1(G))^k,
\end{equation}
where $\ti{V}$ denotes the proper transform of $V$.
Take now a sequence $T_j$ of K\"ahler currents as before so that
$$\lim_{j\to\infty}\int_{V_{\rm reg}}(T_j|_{V_{\rm reg}})_{\rm ac}^k=\langle\alpha^k\rangle_{X|V},$$
which exist thanks to our earlier claim.
Resolve $T_j$ by $\mu_j:X_j\to X$ with $\mu_j^*T_j=\theta_j+[F_j]$ and let as before $[\beta_{j,\delta}]=[\theta_j]-\delta[G_j]$.
For each $j$ choose $\delta_j$ small enough so that $[\beta_j]:=[\beta_{j,\delta_j}]$ is K\"ahler and
$$\int_{\ti{V}_j} (\beta_j+\delta_j c_1(G_j))^k\leq \int_{\ti{V}_j} \beta_j^k+\frac{1}{j}.$$
This gives
$$\int_{V_{\rm reg}}(T_j|_{V_{\rm reg}})_{\rm ac}^k\leq \int_{\ti{V}_j} \beta_j^k+\frac{1}{j},$$
and since $\mu_j^*[\alpha]=[\beta_j]+[E_j]$ with $\beta_j$ K\"ahler and $E_j$ an effective $\mathbb{R}$-divisor which does not contain $\mu_j^{-1}(x)$, then
$$\int_{V_{\rm reg}}(T_j|_{V_{\rm reg}})_{\rm ac}^k\leq \sup_{\mu}\int_{\ti{V}} \beta^k+\frac{1}{j},$$
and passing to the limit in $j$ we have proved that
$$\langle\alpha^k\rangle_{X|V}=\sup_{T}\int_{V_{\rm reg}}(T|_{V_{\rm reg}})_{\rm ac}^k\leq \sup_{\mu}\int_{\ti{V}} \beta^k, $$
where the supremum is over all modifications $\mu$ which are isomorphism near $x$ and so that $\mu^*[\alpha]=[\beta]+[E]$ with $[\beta]$ K\"ahler and $[E]$ is an effective $\mathbb{R}$-divisor which does not contain $\mu^{-1}(x)$.

Conversely suppose we are given a modification $\mu$ which is an isomorphism near $x$ and so that $\mu^*[\alpha]=[\beta]+[E]$ with $[\beta]$ K\"ahler and $[E]$ is an effective $\mathbb{R}$-divisor which does not contain $\mu^{-1}(x)$. Choosing a K\"ahler metric $\omega\in[\beta]$, we have that $T=\mu_*(\omega+[E])$ is a K\"ahler current in the class $[\alpha]$ which restricts to $V$ and is smooth near $x$. Note that for any closed positive current $S$ on $\ti{X}$ we have that $\mu^*\mu_*S-S$ is supported on $E=\mathrm{Exc}(\mu)$, so by Federer's support theorem (see e.g. \cite{Siu74}) we have $\mu^*\mu_*S-S=[F]$ where $F$ is an $\mathbb{R}$-divisor with support contained in the support of $E$. We conclude that $(\mu^*T)_{\rm ac}=(\omega+[E])_{\rm ac}=\omega,$ and so
$$\int_{V_{\rm reg}}(T|_{V_{\rm reg}})_{\rm ac}^k=\int_{\ti{V}} \beta^k,$$
holds. Regularizing $T$ we obtain a sequence of K\"ahler currents $\ti{T}_j$ in the class $\alpha$, with analytic singularities which do not contain $V$ and which are smooth near $x$, and
with $(\ti{T}_j)_{\rm ac}(y)\to T_{\rm ac}(y)$ pointwise for all $y$ where $T(y)$ is smooth (in particular, for all generic $y\in V$). Thanks to Fatou's lemma we have
$$\liminf_{j\to\infty}\int_{V_{\rm reg}}(\ti{T}_j|_{V_{\rm reg}})_{\rm ac}^k\geq \int_{V_{\rm reg}}(T|_{V_{\rm reg}})_{\rm ac}^k,$$
and so we conclude that
$$\sup_{\mu}\int_{\ti{V}} \beta^k\leq \langle\alpha^k\rangle_{X|V},$$
and therefore we have proved the Fujita-type result that
$$\langle\alpha^k\rangle_{X|V}=\sup_{\mu}\int_{\ti{V}} \beta^k.$$
Given now any $V\ni x$ irreducible $k$-dimensional subvariety, we have
$$\left(\frac{\langle\alpha^k\rangle_{X|V}}{\mathrm{mult}_x V}\right)^{\frac{1}{k}}=\sup_\mu \left(\frac{\int_{\ti{V}} \beta^k}{\mathrm{mult}_x V}\right)^{\frac{1}{k}}\geq
\sup_\mu \ve(\beta,\mu^{-1}(x))=\ve(\|\alpha\|,x),$$
where the inequality follows from \eqref{subvar}. This completes the proof.
\end{proof}

From this we deduce:
\begin{proof}[Proof of Theorem \ref{contt}]
The set $\mathcal{A}_x$ of big classes $[\alpha]$ with $x\not\in E_{\rm nK}(\alpha)$ is convex and open (see \cite[Proposition 4.10]{Ma}), and the function
$\psi:\mathcal{A}_x\to\mathbb{R}$ given by $\psi(\alpha)=\ve(\|\alpha\|,x)$ is concave because
$$\ve(\|\alpha+\beta\|,x)\geq \ve(\|\alpha\|,x)+\ve(\|\beta\|,x),$$
whenever $[\alpha],[\beta]\in \mathcal{A}_x$ (because this concavity already holds for the standard Seshadri constants). It follows that $\psi$ is continuous on $\mathcal{A}_x$, and so it remains to show that if $x\in E_{\rm nK}(\alpha)$ and $[\alpha_j]$ are $(1,1)$ classes which converge to $[\alpha]$ then
$$\lim_{j\to\infty}\ve(\|\alpha_j\|,x)=0.$$
But this follows immediately from Proposition \ref{con} and Theorem \ref{movable}.
\end{proof}
We can now complete the proof of Theorem \ref{movable2}.
\begin{proof}[Proof of Theorem \ref{movable2}]
If $x\in E_{\rm nK}(\alpha)$ then both sides are zero, using Conjecture \ref{con3}. So we may assume that $x\not\in E_{\rm nK}(\alpha)$.
Thanks to Theorem \ref{movable} it is enough to show that
$$\ve(\|\alpha\|,x)\geq\inf_{V\ni x}\left(\frac{\langle\alpha^{\dim V}\rangle_{X|V}}{\mathrm{mult}_x V}\right)^{\frac{1}{\dim V}}.$$
For every $\ve>0$ small, take the currents $T_\ve\geq -\ve\omega$ as in Lemma \ref{simple}, so that
for all irreducible subvarieties $V\not\subset E_{\rm nn}(\alpha)$ we have
\begin{equation}\label{dame}
\int_{V_{\rm reg}} (T_\ve|_{V_{\rm reg}}+\ve\omega)_{\rm ac}^{\dim V}\geq \langle\alpha^{\dim V}\rangle_{X|V}
\end{equation}
Let $\mu_\ve:X_\ve \to X$ be a resolution of the singularities of $T_\ve$, (note that $\mu_\ve$ is an isomorphism near $x$ since the singularities of $T_\ve$ are contained in $E_{\rm nn}(\alpha)$) so that $\mu_\ve^*T_\ve=\theta_\ve+[F_\ve]$ where
$\theta_\ve\geq -\ve\mu_\ve^*\omega$ is smooth and $F_\ve$ is an effective $\mathbb{R}$-divisor which does not contain $\mu_\ve^{-1}(x)$. Choose then $\delta_\ve>0$ small enough so that
$$[\beta_\ve]:=[\theta_\ve+2\ve\mu_\ve^*\omega]-\delta_\ve[G_\ve],$$
is a K\"ahler class on $X_\ve$, where $G_\ve$ is the exceptional set of $\mu_\ve$. Up to shrinking $\delta_\ve$, we may also assume that
$$\theta_\ve+2\ve\mu_\ve^*\omega-\delta_\ve R_\ve \geq (1-\gamma_\ve)(\theta_\ve+2\ve\mu_\ve^*\omega),$$
on $X_\ve$, where $R_\ve$ is a suitable smooth representative of $[G_\ve]$, and $\gamma_\ve$ are positive numbers which converge to zero as $\ve\to 0$.
If we let $E_\ve=F_\ve+\delta_\ve G_\ve$, then $E_\ve$ is an effective $\mathbb{R}$-divisor which does not contain $\mu_\ve^{-1}(x)$ and we have
$$[\mu_\ve^*(\alpha+2\ve\omega)]=[\beta_\ve]+[E_\ve].$$
Let now $\ti{V}_\ve$ be an irreducible analytic subvariety of $X_\ve$ which contains $\mu_\ve^{-1}(x)$, of dimension $k>0$, and such that
$$\ve(\beta_\ve,\mu_\ve^{-1}(x))=\left(\frac{\int_{\ti{V}_\ve}\beta_\ve^{k}}{\mathrm{mult}_{\mu_\ve^{-1}(x)} \ti{V}_\ve}\right)^{\frac{1}{k}},$$
which exists thanks to \eqref{subvar}, since $[\beta_\ve]$ is K\"ahler. Since $\mu_\ve^{-1}(x)$ is not on the exceptional set of $\mu_\ve,$ it follows that $V_\ve:=\mu_\ve(\ti{V}_\ve)$ is an irreducible $k$-dimensional analytic subvariety of $X$, which passes through $x$, and with $\mathrm{mult}_{\mu_\ve^{-1}(x)} \ti{V}_\ve=\mathrm{mult}_{x} V_\ve$. We have
\[\begin{split}
\int_{\ti{V}_\ve}\beta_\ve^{k}&\geq (1-\gamma_\ve)^k\int_{\ti{V}_\ve}(\theta_\ve+2\ve\mu_\ve^*\omega)^k\geq
(1-\gamma_\ve)^k\int_{(V_\ve)_{\rm reg}} (T_\ve|_{(V_\ve)_{\rm reg}}+\ve\omega)_{\rm ac}^k\\
&\geq
(1-\gamma_\ve)^k\langle\alpha^k\rangle_{X|V_\ve},
\end{split}\]
using \eqref{dame} for the last inequality,
and so
$$\left(\frac{\int_{\ti{V}_\ve}\beta_\ve^{k}}{\mathrm{mult}_{\mu_\ve^{-1}(x)} \ti{V}_\ve}\right)^{\frac{1}{k}}\geq
(1-\gamma_\ve)\left(\frac{\langle\alpha^k\rangle_{X|V_\ve}}{\mathrm{mult}_{x} V_\ve}\right)^{\frac{1}{k}}\geq
(1-\gamma_\ve)\inf_{V\ni x}\left(\frac{\langle\alpha^{\dim V}\rangle_{X|V}}{\mathrm{mult}_x V}\right)^{\frac{1}{\dim V}},$$
and
$$\ve(\|\alpha+2\ve\omega\|,x)\geq \ve(\beta_\ve,\mu_\ve^{-1}(x))\geq
(1-\gamma_\ve)\inf_{V\ni x}\left(\frac{\langle\alpha^{\dim V}\rangle_{X|V}}{\mathrm{mult}_x V}\right)^{\frac{1}{\dim V}}.$$
As we let $\ve\to 0$, the LHS converges to $\ve(\|\alpha\|,x)$ thanks to Theorem \ref{contt}, and we are done.
\end{proof}

Lastly, we show that Conjecture \ref{con3} would also answer a question of Boucksom. In \cite[Section 3.3]{BoT} he defines the Seshadri constant $\ov{\ve}(\alpha,x)$ for a pseudoeffective $(1,1)$ class $[\alpha]$ (not necessarily nef) at a point $x\not\in E_{\rm nn}(\alpha)$ as the supremum of all $\gamma\geq 0$ such that for all $\ve>0$ there exists a current $T_\ve\in[\alpha]$ with $T_\ve\geq -\ve\omega$ on $X$ and such that $T_\ve$ has an isolated singularity at $x$ with Lelong number $\nu(T_\ve,x)\geq \gamma$. For convenience, let us define $\ov{\ve}(\alpha,x)=0$ for all $x\in E_{\rm nn}(\alpha)$. Boucksom proves in \cite[Theorem 3.3.2]{BoT} that if $x\not\in E_{\rm nn}(\alpha)$ then
$$\ov{\ve}(\alpha,x)=\sup\{\gamma\geq 0 \ |\ E\cap E_{\rm nn}(\mu^*[\alpha]-\gamma[E])=\emptyset\},$$
where $\mu:\ti{X}\to X$ is the blowup of $x$ with exceptional divisor $E$.

Inspired by \cite{Nak2}, Boucksom asks in \cite[Remark, p.90]{BoT} whether for a big class $[\alpha]$ we have that $\ov{\ve}(\alpha,x)=0$ for a generic point $x$ of $E_{\rm nK}(\alpha)$. It is not hard to see that this is a consequence of Conjecture \ref{con3}. In fact, we prove the following stronger result:
\begin{thm}\label{bouses}
Let $[\alpha]$ be a pseudoeffective class on a compact K\"ahler manifold $X$, and assume Conjecture \ref{con3}. Then for every $x\in X$ we have
\begin{equation}\label{seseq}
\ov{\ve}(\alpha,x)=\ve(\|\alpha\|,x).
\end{equation}
In particular, $\ov{\ve}(\alpha,x)=0$ for all $x\in E_{\rm nK}(\alpha)$.
\end{thm}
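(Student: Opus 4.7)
The plan is to establish the two inequalities $\ov{\ve}(\alpha,x) \geq \ve(\|\alpha\|,x)$ and $\ov{\ve}(\alpha,x) \leq \ve(\|\alpha\|,x)$ separately; the additional claim that $\ov{\ve}(\alpha,x)=0$ for $x \in E_{nK}(\alpha)$ is then immediate from the main equality together with the definition $\ve(\|\alpha\|,x)=0$ on $E_{nK}(\alpha)$. The case $x \in E_{nn}(\alpha)$ is trivial since both quantities vanish by definition, so I will assume throughout that $x \notin E_{nn}(\alpha)$.

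For the lower bound $\ov{\ve}(\alpha,x) \geq \ve(\|\alpha\|,x)$, if $x \in E_{nK}(\alpha)$ the right-hand side is zero and the inequality is trivial. Otherwise, given any modification $\mu:\ti{X} \to X$ isomorphic near $x$ together with a decomposition $\mu^*[\alpha] = [\beta] + [E]$, where $[\beta]$ is K\"ahler and $E$ is an effective $\mathbb{R}$-divisor not containing $\mu^{-1}(x)$, and any $\gamma < \ve(\beta,\mu^{-1}(x))$, I would apply Demailly's characterization \eqref{seseq3} of the Seshadri constant of a K\"ahler class to produce a closed positive current $S \in [\beta]$ with isolated singularity at $\mu^{-1}(x)$ of Lelong number at least $\gamma$. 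The pushforward $T := \mu_*(S + [E]) \in [\alpha]$ is then a closed positive current; because $\mu$ is an isomorphism near $x$ and the support of $E$ avoids $\mu^{-1}(x)$, $T$ has isolated singularity at $x$ with Lelong number at least $\gamma$. Since $T \geq 0 \geq -\ve\omega$ for every $\ve > 0$, this witnesses $\ov{\ve}(\alpha,x) \geq \gamma$, and passing to the supremum over $\mu$, the decomposition, and $\gamma$ yields the desired inequality.

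For the reverse inequality I would invoke Theorem \ref{movable2}, reducing the problem to verifying, for every irreducible positive-dimensional subvariety $V \ni x$ of dimension $k$ (necessarily with $V \not\subset E_{nn}(\alpha)$ since $x \in V$ and $x \notin E_{nn}(\alpha)$), that
$$\ov{\ve}(\alpha,x)^k \cdot \mathrm{mult}_x V \leq \langle \alpha^k \rangle_{X|V}.$$
Fix $\gamma < \ov{\ve}(\alpha,x)$. For each small $\ve > 0$ choose $T_\ve \in [\alpha]$ with $T_\ve \geq -\ve\omega$, isolated singularity at $x$, and Lelong number at least $\gamma$ at $x$; then $T_\ve + \ve\omega \geq 0$ has isolated singularity at $x$ of Lelong number at least $\gamma$. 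A classical Demailly-type multiplicity inequality (obtained by passing to an embedded resolution of $V$ and combining the comparison of Lelong numbers under restriction with his self-intersection inequality) gives
$$\int_{V_{reg}} ((T_\ve + \ve\omega)|_{V_{reg}})_{ac}^k \geq \gamma^k \cdot \mathrm{mult}_x V.$$
Taking the supremum over such $T_\ve$ and letting $\ve \downarrow 0$ yields $\langle \alpha^k \rangle_{X|V} \geq \gamma^k \mathrm{mult}_x V$, whence Theorem \ref{movable2} concludes that $\ve(\|\alpha\|,x) \geq \gamma$, and finally $\ve(\|\alpha\|,x) \geq \ov{\ve}(\alpha,x)$.

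The main obstacle is the multiplicity inequality in the reverse direction: namely, that the restriction of a positive current with isolated singularity at $x$ and Lelong number at least $\gamma$ contributes at least $\gamma^k \mathrm{mult}_x V$ to the absolutely continuous Monge--Amp\`ere mass on a $k$-dimensional subvariety $V \ni x$. For smooth $V$ this is a direct consequence of Demailly's comparison theorem and self-intersection inequality; for singular $V$ one must pass to an embedded resolution and track $\mathrm{mult}_x V$ carefully through the pullback, which is technical but classical. One may also need Demailly regularization to reduce to the case of analytic singularities while preserving the Lelong number at $x$ up to $o(1)$ and the isolated singularity property in a neighborhood of $x$.
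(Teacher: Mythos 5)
Your first inequality, $\ov{\ve}(\alpha,x)\geq\ve(\|\alpha\|,x)$, is fine and is essentially the pushforward argument the paper itself uses (it is the direction $\ve(\|\alpha\|,x)\leq\gamma(\alpha,x)$ of \eqref{seseq2}). The genuine gap is in the reverse direction: the ``Demailly-type multiplicity inequality'' $\int_{V_{reg}}((T_\ve+\ve\omega)|_{V_{reg}})_{ac}^{k}\geq\gamma^{k}\,\mathrm{mult}_xV$ is false. An isolated singularity with Lelong number $\gamma$ contributes to the restricted Monge--Amp\`ere measure only through a Dirac-type mass at $x$, and the absolutely continuous (equivalently, non-pluripolar) part discards exactly that mass; Demailly's comparison and self-intersection inequalities bound the mass of the Monge--Amp\`ere carried by the point $\{x\}$, which is precisely what the $ac$ part throws away. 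Concretely, take $X=\mathbb{P}^2$, $[\alpha]=[\omega_{FS}]$, $x=[1:0:0]$, $T=\frac{1}{2}\ddbar\log(|Z_1|^2+|Z_2|^2)\in[\alpha]$ (the pullback of $\omega_{FS}$ under projection from $x$), and $V$ a line through $x$: then $T\geq 0$ has an isolated singularity at $x$ with $\nu(T,x)=1$, but $T|_{V\setminus\{x\}}=0$, so $\int_{V_{reg}}((T+\ve\omega)|_{V})_{ac}=\ve$, while $\gamma\,\mathrm{mult}_xV=1$. Thus restricting a single current witnessing $\ov{\ve}(\alpha,x)$ gives no lower bound on $\langle\alpha^{k}\rangle_{X|V}$, and this is not a matter of regularizing to analytic singularities: the Lelong number at $x$ simply does not control the $ac$ restricted mass from below, so the whole upper-bound argument collapses. (The intermediate statement you reduce to, $\gamma^k\,\mathrm{mult}_xV\leq\langle\alpha^k\rangle_{X|V}$ for $\gamma<\ov{\ve}(\alpha,x)$, is true but is essentially equivalent to the theorem; I do not see how to obtain it by restriction alone.)

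The paper avoids this by converting the isolated Lelong number into positivity on a modification rather than into $ac$ mass on $V$. For $x\notin E_{nK}(\alpha)$ it proves \eqref{seseq2}: $\ve(\|\alpha\|,x)$ equals $\gamma(\alpha,x)$, the supremum of $\gamma$ such that some positive current in $[\alpha]$ itself has an isolated singularity at $x$ with Lelong number $\geq\gamma$. For the inequality $\ve(\|\alpha\|,x)\geq\gamma(\alpha,x)$ one upgrades the witnessing current to a K\"ahler current with analytic singularities (mixing with a K\"ahler current smooth near $x$ and regularizing), resolves its singularities away from $x$, and checks that on the modification the smooth-plus-isolated-singularity part lies in a K\"ahler class $[\beta]$ containing a current with an isolated singularity at $\mu^{-1}(x)$ of Lelong number $\geq\gamma$, so that $\ve(\beta,\mu^{-1}(x))\geq\gamma$ by \eqref{seseq3}; the other inequality is your pushforward argument. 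Since the currents defining $\ov{\ve}(\alpha,x)$ are only bounded below by $-\ve\omega$, one then writes $\ov{\ve}(\alpha,x)=\lim_{\ve\downarrow 0}\gamma(\alpha+\ve\omega,x)=\lim_{\ve\downarrow 0}\ve(\|\alpha+\ve\omega\|,x)$ for $x\notin E_{nn}(\alpha)$ and concludes with the continuity Theorem \ref{contt}; this is where Conjecture \ref{con3} enters, exactly at the points $x\in E_{nK}(\alpha)\setminus E_{nn}(\alpha)$ where your upper bound must force $\ov{\ve}(\alpha,x)=0$. If you wish to keep your reduction through Theorem \ref{movable2}, you would still need a blow-up/modification step of this kind; the restriction-of-a-single-current shortcut cannot work.
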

\begin{proof}
First we claim that if $x\not\in E_{\rm nK}(\alpha)$ (so $[\alpha]$ is necessarily big) then we have
\begin{equation}\label{seseq2}
\ve(\|\alpha\|,x)=\sup\left\{
     \gamma\geq 0\ \bigg|\ \begin{array}{ll}&\exists T\in[\alpha], T\geq 0, T \textrm{ has an isolated}\\
& \textrm{ singularity at }x \textrm{ with } \nu(T,x)\geq\gamma \end{array}\right\}.
\end{equation}
For simplicity denote the RHS of \eqref{seseq2} by $\gamma(\alpha,x)$. Since $x\not\in E_{\rm nK}(\alpha)$, there is a K\"ahler current $S\in [\alpha]$ which is a smooth K\"ahler metric near $x$, and if we add to $S$ a small multiple of $\ddbar(\theta(z) \log|z-x|^2)$ (where $\theta$ is a cutoff function in a chart at $x$) we see that $\gamma(\alpha,x)>0$.
Fix now any $0<\gamma<\gamma(\alpha,x)$, so that there is a current $T\geq 0$ in $[\alpha]$ which has an isolated singularity at $x$ with $\nu(T,x)\geq \gamma$.
Considering $(1-\delta)T+\delta S$ for $\delta>0$ small, and applying Demailly regularization to this (thus lowering $\gamma$ slightly), we may assume without loss of generality that $T$ is a K\"ahler current with analytic singularities.

Thus $x$ is isolated in the analytic variety $E_+(T)$ and we may consider a resolution $\mu:\ti{X}\to X$ of the ideal sheaf defining the singularities of $T$ with the point $x$ removed, so that $\mu$ an isomorphism near $\mu^{-1}(x)$. We have that
$\mu^*T=\theta+[E],$ where $\theta\geq \eta\mu^*\omega$ for some $\eta>0$ and $\theta$ is smooth everywhere except at $\mu^{-1}(x)$ where it has Lelong number at least $\gamma$, and $E$ is an effective $\mathbb{R}$-divisor. As we have done earlier, let $\rho$ be a smooth form on $\ti{X}$ which is cohomologous to a $\mu$-exceptional effective divisor $F$ and such that
$\hat{\omega}=\mu^*\omega-\delta\rho$ is a K\"ahler metric on $\hat{X}$ for some small $\delta>0$. Then $\beta=\theta-\eta\delta\rho\geq\eta\hat{\omega}$ is a K\"ahler current on $\hat{X}$ which is smooth away from $\mu^{-1}(x)$ (and with $\nu(\beta,\mu^{-1}(x))\geq \gamma$)  and so its cohomology class $[\beta]$ is K\"ahler (since $E_{\rm nK}(\beta)$ has no isolated points). We thus have
$\mu^*[\alpha]=[\beta]+[E+\eta\delta F]$, where the divisor $E+\eta\delta F$ does not contain $\mu^{-1}(x)$, and so
$$\ve(\|\alpha\|,x)\geq\ve(\beta,\mu^{-1}(x))\geq\gamma,$$
and since $\gamma<\gamma(\alpha,x)$ is arbitrary, this proves that $\ve(\|\alpha\|,x)\geq\gamma(\alpha,x)$.

For the reverse inequality, fix $0<\gamma<\ve(\|\alpha\|,x)$ and a modification $\mu:\ti{X}\to X$ which is an isomorphism near $x$ and so that $\mu^*[\alpha]=[\beta]+[E]$ with $[\beta]$ K\"ahler and $E$ an effective $\mathbb{R}$-divisor which does not contain $\mu^{-1}(x)$, and with $\ve(\beta,\mu^{-1}(x))\geq\gamma.$ Up to lowering $\gamma$ slightly, there is a K\"ahler current $\ti{T}\in[\beta]$ with an isolated singularity at $\mu^{-1}(x)$ with $\nu(\ti{T},\mu^{-1}(x))\geq \gamma.$ Then
$T=\mu_*(\ti{T}+[E])$ is a K\"ahler current in the class $[\alpha]$ with an isolated singularity at $x$ with $\nu(T,x)\geq \gamma$, and so $\gamma(\alpha,x)\geq \gamma$. Since
$\gamma<\ve(\|\alpha\|,x)$ is arbitrary, this proves that $\ve(\|\alpha\|,x)\leq\gamma(\alpha,x)$.

Now that \eqref{seseq2} is proved, combining it with the definition of $\ov{\ve}(\alpha,x)$ we see that
$$\ov{\ve}(\alpha,x)=\lim_{\ve\downarrow 0}\gamma(\alpha+\ve\omega,x)=\lim_{\ve\downarrow 0}\ve(\|\alpha+\ve\omega\|,x),$$
for all $x\not\in E_{\rm nn}(\alpha)=\cup_{\ve>0}E_{\rm nK}(\alpha+\ve\omega)$. Applying Theorem \ref{contt} we conclude that \eqref{seseq} holds for all $x\not\in E_{\rm nn}(\alpha)$, and hence for all $x$ (since in the other case both sides are zero).
\end{proof}

\section{Relations with the orthogonality conjecture}\label{sect8}
The orthogonality conjecture of Boucksom-Demailly-P\u{a}un-Peternell \cite{BDPP} states that if $[\alpha]$ is a pseudoeffective class on a compact K\"ahler manifold then
\begin{equation}\label{goo}
\langle\alpha^{n-1}\rangle \cdot\alpha = \vol(\alpha),
\end{equation}
where $\langle \cdot\rangle$ denotes the moving intersection product \cite{BoT, BDPP, BEGZ,BFJ}. One way to define it is the following: consider the currents with analytic singularities $T_\ve\in[\alpha]$ with $T_\ve\geq-\ve\omega$ given by Lemma \ref{simple}, take $\mu_\ve:X_\ve\to X$ a resolution of the singularities of $T_\ve$, so that $\mu_\ve^*(T_\ve+\ve\omega)=\theta_\ve+[E_\ve]$, and $\theta_\ve\geq 0$ is now smooth, while $E_\ve$ is an effective $\mathbb{R}$-divisor. Then for $1\leq p\leq n$ we define
$$\langle\alpha^p\rangle=\lim_{\ve\to 0}[(\mu_{\ve})_*(\theta_\ve^p)]\in H^{p,p}(X,\mathbb{R}),$$
where showing the existence of the limit requires some work \cite{BoT}.
This is well-defined independent of the choice of the currents $T_\ve$, and it is easy to see that \eqref{goo} is equivalent to
\begin{equation}\label{unnnn}
\lim_{\ve\to 0}\int_{X_\ve}\theta_\ve^{n-1}\wedge[E_\ve]=0.
\end{equation}
This is known when $X$ is projective and $[\alpha]=c_1(L)$ by \cite[Theorem 4.1]{BDPP} (see also \cite[Corollary 3.6]{BFJ}), if $X$ is projective and $[\alpha]$ is general by \cite{WN}, and
if $X$ is general and $\vol(\alpha)=0$ (i.e. $[\alpha]$ is not big) by \cite{To2}.

If we write $[\alpha]=P+N$ for the divisorial Zariski decomposition, then we have
$P=\langle \alpha\rangle,$
and  $N=\lim_{\ve\to 0} (\mu_\ve)_*[E_\ve]$.
We also have that
$\langle \alpha^k\rangle=\langle P^k\rangle$
(see e.g. \cite{BFJ, BEGZ}, and compare with Lemma \ref{l3}), and that the mobile intersection product $\langle \alpha^k\rangle$ is continuous as $[\alpha]$ varies in the big cone.
It is also not hard to see that \eqref{goo}
holds if and only if we have
the following two relations
\begin{equation}\label{orth}
\langle \alpha^{n-1}\rangle\cdot P=\vol(\alpha),
\end{equation}
\begin{equation}\label{goo2}
\langle \alpha^{n-1}\rangle\cdot N=0.
\end{equation}
Relation \eqref{goo2} by itself is in general weaker than \eqref{goo}.

The relation with restricted volumes is the following: if $D$ is a prime divisor and $D\not\subset E_{\rm nn}(\alpha)$, then we have
\begin{equation}\label{ineqqq}
\langle \alpha^{n-1}\rangle_{X|D}\leq \langle \alpha^{n-1}\rangle\cdot D=\int_D \langle \alpha^{n-1}\rangle.
\end{equation}
Indeed, using the notation from above, we have $\mu_\ve^*D=D_\ve+F_\ve$, where $D_\ve$ is the proper transform and $F_\ve$ is a $\mu_\ve$-exceptional effective divisor, and so
\[\begin{split}\langle \alpha^{n-1}\rangle\cdot D&=\lim_{\ve\to 0}\int_{X_\ve}\theta_\ve^{n-1}\wedge [\mu_\ve^*D]=
\lim_{\ve\to 0}\int_{D_\ve}\theta_\ve^{n-1}+\lim_{\ve\to 0}\int_{X_\ve}\theta_\ve^{n-1}\wedge [F_\ve]\\
&\geq \lim_{\ve\to 0}\int_{D_\ve}\theta_\ve^{n-1}=\langle \alpha^{n-1}\rangle_{X|D},\end{split}\]
where the last equality follows from \eqref{simp2}.
Following \cite[Lemma 4.10]{BFJ} we have the following:
\begin{prop}
If the orthogonality in \eqref{goo2} holds for all big classes on $X$, then Conjecture \ref{con3} holds whenever $V$ is a divisor.
\end{prop}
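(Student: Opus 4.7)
The plan is to reduce the statement to proving $\int_V \langle\alpha^{n-1}\rangle=0$, and to establish this by perturbing $\alpha$ to $\alpha-t\omega$ so that $V$ enters the negative part of the divisorial Zariski decomposition, at which point the orthogonality hypothesis forces vanishing of $\int_V\langle\alpha_t^{n-1}\rangle$. First, if $V\subset E_{nn}(\alpha)$ then $\langle\alpha^{n-1}\rangle_{X|V}=0$ by definition, so we may assume $V\not\subset E_{nn}(\alpha)$. By \eqref{ineqqq} it suffices to prove $\int_V\langle\alpha^{n-1}\rangle=0$. For every $t>0$ sufficiently small, the class $\alpha_t:=\alpha-t\omega$ remains big, since the big cone is open.

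The crucial claim is that $\nu(\alpha_t,V)>0$ for every such $t$. To prove it by contradiction, suppose $\nu(\alpha_t,V)=0$ for some small $t>0$, and let $T_t$ be a positive current with minimal singularities in $[\alpha_t]$, so that $\nu(T_t,V)=0$. Demailly's regularization \cite{Dem92} then produces currents $T_{t,k}\in[\alpha_t]$ with analytic singularities satisfying $T_{t,k}\geq-\ve_k\omega$ with $\ve_k\downarrow 0$, together with the Lelong number bound $\nu(T_{t,k},V)\leq\nu(T_t,V)=0$. For $k$ large enough that $\ve_k<t$, the current $T_{t,k}+t\omega$ lies in $[\alpha]$, is bounded below by $(t-\ve_k)\omega>0$, has analytic singularities, and (since $\nu(T_{t,k},V)=0$) $V$ is not contained in its singular set. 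This contradicts $V\subset E_{nK}(\alpha)$.

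With the claim established, $V\in\mathrm{Supp}(N_{\alpha_t})$ for every small $t>0$. Expanding the orthogonality $\langle\alpha_t^{n-1}\rangle\cdot N_{\alpha_t}=0$ as $\sum_D\nu(\alpha_t,D)\int_D\langle\alpha_t^{n-1}\rangle=0$, and noting that every summand is nonnegative by \eqref{ineqqq}, the term corresponding to $D=V$ must vanish, giving $\int_V\langle\alpha_t^{n-1}\rangle=0$. Letting $t\downarrow 0$ and using the continuity of the mobile intersection product on the big cone yields $\int_V\langle\alpha^{n-1}\rangle=0$, which together with \eqref{ineqqq} completes the proof. The main obstacle is the contradiction step in the second paragraph, which depends critically on the monotonicity of Lelong numbers under Demailly regularization, allowing the transfer of a nearly-positive current in the perturbed class into a genuine K\"ahler current in $[\alpha]$ whose analytic singularities avoid $V$.
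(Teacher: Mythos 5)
Your proposal is correct and follows essentially the same route as the paper: reduce to showing $\int_V\langle\alpha^{n-1}\rangle=0$ via \eqref{ineqqq}, perturb to $\alpha-t\omega$ so that $V$ acquires positive minimal multiplicity (hence appears in the negative part $N_{\alpha_t}$), apply the orthogonality \eqref{goo2} together with positivity to kill the $V$-term, and conclude by continuity of the moving product on the big cone. The only small imprecision is your appeal to \eqref{ineqqq} for the nonnegativity of each summand $\int_D\langle\alpha_t^{n-1}\rangle$: that inequality is stated only for $D\not\subset E_{nn}(\alpha_t)$, whereas the divisors in $\mathrm{Supp}(N_{\alpha_t})$ are contained in $E_{nn}(\alpha_t)$; the nonnegativity instead follows directly from the fact that the moving intersection product pairs nonnegatively with classes of effective divisors (equivalently, with pseudoeffective classes), which is exactly the positivity the paper invokes when pairing with $[\alpha-\ve\omega]-P_\ve-t[V]$.
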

Thanks to the work of Witt-Nystr\"om \cite{WN}, this immediately implies Proposition \ref{ortorv}.
\begin{proof}
Let $V$ be a prime divisor which is one of the irreducible components of $E_{\rm nK}(\alpha)$, and is not contained in $E_{\rm nn}(\alpha)$. The goal is to show that
\begin{equation}\label{ineqqq2}
\int_V \langle\alpha^{n-1}\rangle=0,
\end{equation}
which thanks to \eqref{ineqqq} implies that $\langle \alpha^{n-1}\rangle_{X|V}=0$ as required.
For $\ve>0$ small enough, $[\alpha-\ve\omega]$ is still big and $V$ is one of the
irreducible components of $E_{\rm nn}(\alpha-\ve\omega)$, and so if we show that
\begin{equation}\label{ineqqq3}
\int_V \langle(\alpha-\ve\omega)^{n-1}\rangle=0,
\end{equation}
then passing to the limit (recall that the mobile product is continuous in the big cone) we obtain \eqref{ineqqq2}. Since $V$ is a component of $E_{\rm nn}(\alpha-\ve\omega)$, if we write
$[\alpha-\ve\omega]=P_\ve+N_\ve$ for the divisorial Zariski decomposition, then we have
$$[\alpha-\ve\omega] \geq P_\ve+t[V],$$
for some $t>0$ (which depends on $\ve$), where the inequality of classes means that the difference is pseudoeffective.
Therefore,
$$\int_X \langle(\alpha-\ve\omega)^{n-1}\rangle\wedge (\alpha-\ve\omega)\geq \int_X \langle(\alpha-\ve\omega)^{n-1}\rangle\wedge P_\ve+t\int_V \langle(\alpha-\ve\omega)^{n-1}\rangle, $$
but using \eqref{goo2} we have that
$$\int_X \langle(\alpha-\ve\omega)^{n-1}\rangle\wedge (\alpha-\ve\omega)= \int_X \langle(\alpha-\ve\omega)^{n-1}\rangle\wedge P_\ve,$$
and so we conclude that \eqref{ineqqq3} holds.
\end{proof}

\end{document}